\documentclass[reqno]{amsart}
\usepackage[english]{babel} 
\usepackage{amsmath,amsfonts, amsthm,amssymb,mathtools} 
\usepackage{mathrsfs,mathabx,bbold}    
\usepackage[alphabetic]{amsrefs}
\usepackage{hyperref} 
\usepackage{graphicx} 
\usepackage{framed}   
\usepackage{ifthen}   
\usepackage{lastpage} 
\usepackage{mathdots} 
\usepackage{enumerate}
\usepackage{tikz}     
\usepackage{verbatim} 
\usepackage{stmaryrd,mathrsfs,calrsfs,dutchcal,mathtools}
\usepackage{parskip}
\usepackage{abstract}
\usepackage{parskip}
\usepackage{enumitem}

\newcommand{\RR}{\mathbb{R}} 
\newcommand{\NN}{\mathbb{N}} 

\newcommand{\pp}{\mathcal{P}}

\newcommand{\bq}{\begin{question}}
	\newcommand{\eq}{\end{question}}
\def\ba#1\ea{\begin{align*}#1\end{align*}}

\newcommand{\powset}[2][]{\ifthenelse{\equal{#2}{}}{\mathcal{P}\left(#1\right)}{\mathcal{P}_{#1}\left(#2\right)}}

\newcommand{\dist}{\text{dist}}

\let\sumnonlimits\sum
\let\prodnonlimits\prod
\let\cupnonlimits\bigcup
\let\capnonlimits\bigcap
\renewcommand{\sum}{\sumnonlimits\limits}
\renewcommand{\prod}{\prodnonlimits\limits}
\renewcommand{\bigcup}{\cupnonlimits\limits}
\renewcommand{\bigcap}{\capnonlimits\limits}
\newcommand{\XX}{\mathfrak{X}}

\newcommand{\calA}{\mathcal{A}}

\newtheoremstyle{plainsl}
{8pt plus 2pt minus 4pt}
{8pt plus 2pt minus 4pt}
{\slshape}
{0pt}
{\bfseries}
{.}
{5pt plus 1pt minus 1pt}
{}

\theoremstyle{plainsl}
\newtheorem{theorem}{Theorem}[section]
\newtheorem{prop}[theorem]{Proposition}
\newtheorem{prop*}{Proposition}

\newtheorem{lemma}[theorem]{Lemma}
\newtheorem{corollary}[theorem]{Corollary}

\theoremstyle{definition}
\newtheorem{definition}[theorem]{Definition}
\newtheorem{example}[theorem]{Example}

\newtheorem{question}[theorem]{Question}

\theoremstyle{remark}
\newtheorem{remark}[theorem]{Remark}

\newcommand{\argmax}{\text{argmax}}

\newcommand{\nn}{\nonumber}
\def\ba#1\ea{\begin{align*}#1\end{align*}}

\newcommand{\Tr}{\text{Tr}}

\begin{document}

\title[Comparison theorem for viscosity solutions]{Comparison theorems for weak solutions of nonlinear maximally sub-elliptic PDEs}


\author{Gautam Neelakantan Memana}
\address{Van Vleck Hall, 213, 480 Lincoln Dr, Madison, WI 53706}
\curraddr{}
\email{neelakantanm@wisc.edu}
\thanks{}

\date{}

\dedicatory{}
	\maketitle
  \begin{abstract} 
We establish a comparison principle for viscosity subsolutions and supersolutions of a broad class of second-order quasilinear, maximally subelliptic PDEs on general manifolds. In fact, we prove the comparison theorem for a larger class of degenerate subelliptic PDEs. Our result strengthens a recent theorem of Manfredi–Mukherjee, which was established in the setting of Carnot groups. Our main aim is to highlight that maximal subellipticity allows one to obtain a comparison principle for weak solutions in close analogy with the classical elliptic theory.
\end{abstract}
 \numberwithin{equation}{section}
\section{Introduction}\label{Introduction}
Let $\mathcal{M}$ be a connected $C^{\infty}$ manifold of dimension $n$ with a smooth, strictly positive density $Vol$. Let $C^{\infty}(\mathcal{M}; T\mathcal{M})$ denote the smooth sections of the tangent bundle $T\mathcal{M}$. Let 
\begin{align*}
    \XX= \{X_1,..., X_m\} \subset C^{\infty}(\mathcal{M}; T\mathcal{M})
\end{align*}
be such that $\XX$ satisfies H\"ormander's bracket generating condition. i.e, 
\begin{align}\label{Hormander condition}
   & X_1(x), X_2(x), \cdots, X{m}(x), \cdots[X_i, X_j](x), \cdots, [X_i, [X_j, X_k]](x)\nonumber\\
    &\cdots, \cdots, \text{commutators of order $r$ evaluated at x},
\end{align}
for some finite $r\in \NN$. 
Let also let 
\begin{align}
    \XX\XX=\{X_iX_j\}_{1,j\in \{i,\cdots, m\}}
\end{align}
We will first describe a general fully nonlinear second order maximally subelliptic operators and then later on restrict our attention to quasilinear setting. The goal is to study weak solutions(viscosity sub/super solutions) for a PDE of the form 
\begin{align}\label{introduction: general partial differential equation}
    F(x,u, \XX u, \XX\XX u) =g(x),
\end{align}
 for a smooth function $F$ in all the variables(later on we will lower the regularity to just continuity).
 \begin{remark}
     Since $\{X_1,...,X_k\}$ satisfies H\"ormander's condition, every nonlinear PDE can be written in the form \eqref{introduction: general partial differential equation}. Hence, we are not making any assumptions on the form of the PDE.
 \end{remark}

\begin{definition}\label{introduction: definition of maximally subelliptic PDE}(Maximal subellipticity)
Let $x_0\in \mathcal{M}$ and $u:\mathcal{M}\to \RR$. We say that a PDE given by \eqref{introduction: general partial differential equation} for a smooth function $F(x, \zeta)$ is \textit{maximally subelliptic} at $(x_0,u)$ of degree $\kappa$ with respect to $W$ if there exists an open neighborhood $U\subseteq \mathcal{M}$ with $x_0\in U$ such that the linearized operator $\mathcal{P}_{u,x_0}$ defined by  
 \begin{align}\label{Introduction: linearized operator}
     \mathcal{P}_{u, x_0}v:= d_{\zeta} F(x_0,u(x_0), \XX u(x_0), \XX\XX u(x_0))\{\XX v, \XX\XX v\}
 \end{align}
is a linear \textit{maximally subelliptic} partial differential operator of degree $2$ with respect to $\XX$ on $U$,
i.e., for every relatively compact, open set $\Omega \Subset U$ there exists $C_{\Omega}\geq 0$ satisfying
 \begin{align}
     \sum_{j=1}^{m} \|X_{j}^{2}f\|_{L^2} \leq C_{\Omega} (\|\pp_{u,x_0} f\|_{L^2}+\|f\|_{L^2}), 
 \end{align}
for every $f\in C_{0}^{\infty}(\Omega)$.
\end{definition}
  Also, see \cite[Theorem 8.1.1]{BS} for equivalent characterizations of linear maximally subelliptic PDE. We also recommend the reader to see \cite{AMY22}, where the authors show equivalent representation theoretic characterization of linear maximally subelliptic operators by proving a conjecture of Helffer and Nourrigat \cite{AMY22}. For regularity theory for fully nonlinear maximally subelliptic PDEs see \cites{BS, NM24}.  
  
In this article, we consider quasi-linear equations of the form 
\begin{align}\label{Introduction: main equation}
    \sum_{i,j=1}^{m} A_{i,j}(\mathfrak{X}u)X_iX_ju= H(\mathfrak{X}u)\ \in \Omega,
\end{align}
where $\mathfrak{X}u= (X_1u,..., X_mu)$ is the sub-elliptic gradient, $\mathfrak{X}\mathfrak{X}u - (X_jX_iu)_{i,j}$ is the sub-elliptic second derivative, and $A_{i,j}, H:\RR^m\to \RR$ are continuous functions such that the $(m\times m)$ matrix $A(\xi)$ with entries $A_{i,j}(\xi)$ is symmetric and positive semi-definite. Hence, the equation \eqref{Introduction: main equation} can also be re-written as 
\begin{align}\label{Introduction: main equation re-written}
    \mathcal{L}u:= -\Tr(A(\mathfrak{X}u)\mathfrak{X}^*\mathfrak{X}u)+ H(\mathfrak{X}u)=0,
\end{align}
where $\mathfrak{X}^*\mathfrak{X}u$ is the symmetrized matrix with entries $\frac{1}{2}(X_i^*X_ju +X_j^*X_iu)$. Furthermore, we asssume that $A,H$ satisfy the following conditions of strict ellipticity and scaling;
\begin{enumerate}
   \item \label{non-degeneracy} $\mathcal{E}(\xi):= \langle A(\xi)\xi, \xi\rangle>0$, $\forall \xi \in \RR^m\backslash\{0\}$ \label{introduction: equation 1.6 of MM},
  \item $-\Tr (A(t\xi) X) \leq \frac{-\Tr(A(\xi)X)}{t\phi(1/t)}$, and $H(t\xi)\leq \frac{1}{\phi(1/t)} H(\xi)$ \label{introduction: equation 1.7 of MM},
\end{enumerate}
for any $t\geq 1, \xi\in \RR^m\backslash \{0\}$ and $X\in \RR^{m\times m}$ symmetric, where $\phi: (0,1]\to (0,1]$ is a strictly positive function. The equation \eqref{Introduction: main equation re-written} is said to be degenerate if $A(0)=0$. The reason why we see \eqref{introduction: equation 1.7 of MM} as a scaling condition is: if $\phi$ is a power function then $\xi\mapsto |\xi|\Tr(A(\xi)X)$ and $H(\xi)$ are doubling functions. 
 \begin{remark}
     Assume that $A(\xi)$ is strictly positive definite every $\xi\in \RR^\backslash \{0\}$. Also assume that $A$ and $H$ are smooth(differentiable is sufficient). Then the linearization of $\mathcal{L}$ at point $x_0$ and function $u$ as in \eqref{Introduction: linearized operator} is 
     \begin{align}
         \mathcal{P}_{u,x_0} v
=
-\operatorname{Tr}\!\Big(A\big(\XX u(x_0)\big)\,\XX^{*}\XX v\Big)
-\Big\langle D_{\xi}A\big(\XX u(x_0)\big)[\,\XX v\,],\,\XX^{*}\XX u(x_0)\Big\rangle_{F}
\;+\; D_{\xi}H\big(\XX u(x_0)\big)\cdot \XX v, 
     \end{align}
    where $D_{\xi}A(\cdot)[\cdot]$ is the derivative of $A$ with respect to its $\xi$-argument applied to $\XX v$, and $\langle \cdot, \cdot \rangle_{F}$ is the Frobenius matrix inner product and all the coefficients are frozen at $x_0$. Now, we refer the reader to \cite[Lemma 8.9.3]{BS} to see that \eqref{introduction: equation 1.6 of MM} implies that the above linearized operator is maximally subellitpic. Hence the operator from \eqref{Introduction: main equation re-written} contains nonlinear maximally subelliptic PDEs once you impose some smoothness assumption on the coefficients. 
 \end{remark}
 \begin{example}
     If $H\equiv 0$ and $A\equiv \mathbb{I}_m$ the identity matrix, then $\mathcal{L}$ corresponds to the sub-Laplacian $\Tr(\XX^*\XX)$.
    
 \end{example}
 \begin{example}
     If $H\equiv 0$ and $A(\xi)=\xi\otimes \xi$ we get the $\infty-$Laplacian 
    
        $$ \Delta_{\XX, \infty} = \langle \XX^*\XX u\XX u, \XX u \rangle.$$
         $\infty-$Laplacian has been extensively studied in the Euclidean setting; see the celebrated paper of \cite{Jensen} on uniqueness of viscosity solutions to Dirichlet boundary value problem.  
     \end{example}
     \begin{example}
         When $H\equiv 0$ and $A(\xi)=\mathbb{I}_{m} +(p-2) (\xi\otimes \xi)/|\xi|^2$ for $1<p<\infty$ we get the normalized $p-$Laplacian 
         \begin{align*}
             \Delta_{\XX, p}^N = \Tr(\XX^*\XX)+ (p-2) \frac{\Delta_{\XX, \infty} u}{|\XX u|^2}. 
         \end{align*}
     \end{example}
Even though our motivation to study PDE of the form \eqref{Introduction: main equation re-written} comes from trying to understand weak solutions of maximally subelliptic PDEs there is a rich history for such PDEs motivated from questions in sub-Riemannian geometry like Uhlenbeck-Uraltseva structure equation and mean curvature flow type equation to name some; we refer the reader to \cite{MM} for more history.

Now, we state the main theorem of the article which proves comparison principle for viscosity sub/super solutions for \eqref{Introduction: main equation re-written}.

 \begin{theorem}\label{Introduction: Theorem 1.1 of MM}
Let $M$ be a manifold and let $\Omega$ be an open and connected bounded domain of $M$. Let $\mathcal{L}$ be as in \eqref{Introduction: main equation re-written} satisfying condition \eqref{introduction: equation 1.6 of MM} and \eqref{introduction: equation 1.7 of MM}. If there exists $u,v\in C(\bar{\Omega})$ such that $\mathcal{L}u\leq 0\leq \mathcal{L}v$ in the viscosity sense in $\Omega$ and $u\leq v$ in $\partial \Omega$, then we have $u\leq v$ in $\Omega$. 
 \end{theorem}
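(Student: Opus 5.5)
We follow the strategy of Manfredi--Mukherjee in the Carnot setting, replacing group translations and dilations with constructions that use only the H\"ormander vector fields $\XX$ and the scaling hypothesis \eqref{introduction: equation 1.7 of MM}. The plan has two stages. First, perturb the subsolution $u$ into a \emph{strict} subsolution $u_\epsilon$ with $u_\epsilon \to u$ uniformly on $\bar\Omega$. Second, prove comparison between a strict subsolution and a supersolution by a doubling-of-variables argument with the sub-Riemannian structure localized in coordinate charts.

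\textbf{Step 1 (strict subsolution).} Since $\Omega$ is bounded, we may take $\bar\Omega$ inside a single chart, or else cover it by finitely many charts and argue locally. The idea is a change of dependent variable $u_\epsilon = f_\epsilon(u)$, where $f_\epsilon$ is increasing with $f_\epsilon' \geq 1$ and $f_\epsilon'' < 0$; a natural candidate is a small concave perturbation of the identity such as $f_\epsilon(s) = s - \epsilon e^{-Ks}$. If $\varphi$ touches $u_\epsilon$ from above, then $\psi = f_\epsilon^{-1}\circ\varphi$ touches $u$ from above. The chain rule gives
\[
\XX\varphi = f'\,\XX\psi, \qquad \XX^*\XX\varphi = f'\,\XX^*\XX\psi - f''\,\XX\psi\otimes\XX\psi,
\]
up to the first-order terms coming from $X_i^* = -X_i - \operatorname{div}X_i$. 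These terms are absorbed into the quantities above, and that is exactly why the equation is written with the symmetrized $\XX^*\XX$.

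Now apply \eqref{introduction: equation 1.7 of MM} with $t = f' \geq 1$ to compare $-\Tr\bigl(A(f'\xi)\,\cdot\bigr)$ and $H(f'\xi)$ with their values at $\xi$. This scaling carries the subsolution inequality for $\psi$ over to $\varphi$. The concavity term contributes $f''\,\mathcal{E}(\XX\psi)$, which is strictly negative wherever $\XX\psi \neq 0$ by \eqref{introduction: equation 1.6 of MM}. One gets $\mathcal{L}\varphi \leq -c_\epsilon < 0$ as long as $\XX\varphi$ stays in a compact set away from $0$. The degenerate case $\XX\varphi = 0$ must be handled separately. As in Manfredi--Mukherjee, either one uses that $\mathcal{L}$ is interpreted through semicontinuous envelopes at $\xi = 0$, or one adds to $u$ a small smooth function $\epsilon w$ with $\XX w \neq 0$ everywhere. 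Such a $w$ can be taken to be a coordinate function on a small chart, since some $X_j$ is nonvanishing once $\Omega$ is shrunk, and a partition argument handles the general case.

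\textbf{Step 2 (comparison for strict subsolutions).} Suppose that $\max_{\bar\Omega}(u_\epsilon - v) = \delta > 0$. Double variables by setting
\[
\Phi(x,y) = u_\epsilon(x) - v(y) - \tfrac{1}{2\tau}|x-y|^2,
\]
with $|\cdot|$ taken in local coordinates. Let $(x_\tau, y_\tau)$ be an interior maximum point of $\Phi$. Apply the Crandall--Ishii lemma in Euclidean coordinates to obtain jets $(p, X)$ and $(p, Y)$ with $X \leq Y$ in the matrix sense, up to $O(1/\tau)|x_\tau - y_\tau|^2$. The horizontal data are $\sigma(x)p$ and $\sigma(x)^T X \sigma(x)$, where $\sigma(x)$ is the $n\times m$ matrix whose columns are the $X_j(x)$, plus first-order corrections. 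Because the $X_j$ are smooth and hence Lipschitz, the standard estimate gives
\[
\sigma(x)^T X \sigma(x) - \sigma(y)^T Y \sigma(y) \leq \frac{C}{\tau}|x-y|^2.
\]
Also $\tfrac{1}{\tau}|x_\tau - y_\tau|^2 \to 0$. The two horizontal gradients $\sigma(x_\tau)p$ and $\sigma(y_\tau)p$ differ by $O\bigl(|p|\,|x_\tau - y_\tau|\bigr) \to 0$. Subtracting the two viscosity inequalities and using the continuity of $A$ and $H$ then contradicts the strict gap $-c_\epsilon$, provided the horizontal gradients stay bounded away from $0$ and from $\infty$.

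\textbf{Main obstacle.} The hard step is controlling the horizontal gradient $\sigma(x_\tau)p_\tau$ with $p_\tau = (x_\tau - y_\tau)/\tau$. It can vanish even though $p_\tau \neq 0$, because $\sigma$ is not surjective, and there $A$ degenerates. I would first try to rule this out using the strictness from Step 1. If $\sigma(x_\tau)p_\tau \to 0$, pass to limits and use the continuity of $A$ and $H$ at $0$ to get a contradiction with $\mathcal{L}u_\epsilon \leq -c_\epsilon$ evaluated at a near-zero gradient. For this to work, the perturbation in Step 1 must produce strictness that is uniform even at small gradients. Concretely, I would choose the added function $\epsilon w$ so that the contribution $-\epsilon\,\Tr(A\,\XX^*\XX w)$ is strictly negative uniformly, for example $w = \lambda|x|^2$ in coordinates. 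The first thing to verify is that such a uniform choice is compatible with the scaling condition \eqref{introduction: equation 1.7 of MM}.
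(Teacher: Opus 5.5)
There is a genuine gap, and it sits at the step you yourself flag as the main obstacle: the vanishing horizontal gradient. The fix you propose cannot work. The theorem covers degenerate operators with $A(0)=0$. For instance, $A(\xi)=\xi\otimes\xi$, $H\equiv 0$ (the $\infty$-Laplacian) satisfies \eqref{introduction: equation 1.6 of MM} and \eqref{introduction: equation 1.7 of MM} with $\phi(s)=s^3$. For this operator $\mathcal{L}\varphi(x)=0$ whenever $\XX\varphi(x)=0$, whatever the second derivatives of $\varphi$ are.

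So no modification of $u$ can be a subsolution with a gap $-c_\epsilon$ that is uniform near zero gradient. The term $-\epsilon\Tr(A(\XX\varphi)\,\XX^*\XX w)$ you want from $w=\lambda|x|^2$ vanishes exactly there, and so does the gain $\frac{f''}{f'}\mathcal{E}(\XX\psi)$ from the change of variables. Moreover, $u+\epsilon w$ is not known to be a subsolution at all. If $\varphi$ touches it from above, you only learn $\mathcal{L}(\varphi-\epsilon w)\le 0$, and $A(\XX\varphi-\epsilon\XX w)-A(\XX\varphi)$ multiplies the unbounded second derivatives of $\varphi$. Semicontinuous envelopes at $\xi=0$ only relax inequalities; they never create strictness.

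Two further problems:
\begin{itemize}
\item Step 1 has the wrong sign. With the convention that makes $\mathcal{L}$ degenerate elliptic (the one you use in Step 2, where $\XX^*\XX$ is the symmetrized horizontal Hessian $\sigma^TD^2u\,\sigma+M$), one has $\mathcal{L}(f\circ\psi)\le\frac{1}{\phi(1/f')}\bigl[\mathcal{L}\psi-\frac{f''}{f'}\mathcal{E}(\XX\psi)\bigr]$. So $f$ must be convex, as in the paper's $u+\lambda(u-u_0)^2$, not concave as in your Step 1.
\item Step 2 does not close even when the gradients stay away from $0$. The difference $A(\sigma(x_\tau)^Tp_\tau)-A(\sigma(y_\tau)^Tp_\tau)$ is paired with Ishii matrices of size $O(1/\tau)$, and with $M(\cdot,p_\tau)$ where $|p_\tau|$ need not stay bounded. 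Mere continuity of $A$ cannot control these products.
\end{itemize}

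The missing idea is the Barles--Busca/Manfredi--Mukherjee device that the paper follows: never seek strictness at zero gradient.
\begin{itemize}
\item First, replace $u,v$ by semi-convex/semi-concave sup/inf-convolutions built from the smoothed CC distance. The lack of left-invariance is paid for with a perturbed operator $\mathcal{L}^\epsilon$ (Proposition \ref{approximating by convex solutions}). Jensen's lemma and Aleksandrov's theorem then give a single point where both functions are twice differentiable, with equal gradients and ordered Hessians. There is no doubling of variables and hence no gradient mismatch.
\item Next, prove comparison under the extra hypothesis that $\XX u$ (or $\XX v$) is nonzero at every maximum point of $u-v$ (Lemma \ref{Semi convex solutions: Lemma 3.4 from MM}). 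There the convex perturbation does give a strict gap.
\item Finally, remove that hypothesis by translating along the flows, $u_h(x)=u(\exp_x(h\cdot\XX))$ and $v_l(x)=v(\exp_x(l\cdot\XX))$. Either for some $l_0$ the gradient vanishes at some maximum point of $u_h-v_{l_0}$ for every small $h$, or some translate has nonvanishing gradient at all its maximum points.
\item In the first case, the Lipschitz map $h\mapsto M_\delta(h,l_0)$ has zero derivative a.e.\ (Cheeger's Rademacher theorem) and so is constant. This produces an interior local maximum of $u$ and an interior local minimum of $v$. The strong maximum principle (Corollary \ref{Background: Corolllary 2.3 of MM}, from Bardi--Goffi via the ellipticity and scaling hypotheses) then forces $u-v$ to be constant, contradicting the boundary inequality.
\item In the second case, the nondegenerate lemma applies.
\end{itemize}
The strong maximum principle is what disposes of the degenerate case, and your plan has no substitute for it.
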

\begin{remark}
    As in \cite{MM}, we remark that the solvability of the Dirichlet problem $\mathcal{L}u=0$ in $\Omega$ and $u=f$ in $\partial \Omega$ requires appropriate assumptions on the boundary; see \cites{ACJ, BB, CIL}. Once you have existence, Theorem \ref{Introduction: Theorem 1.1 of MM} will give uniqueness of the solution. 
\end{remark}
\subsection{Main Ideas} In \cite{BB}, Barles and Busca proved uniqueness result for Dirichlet problem for a domain $\Omega\subset\RR^n$ and $f\in C^{0,1}(\Omega)$ for a general class of degenerate elliptic equation that includes the $\infty-$Laplacian(Euclidean setting). In \cite{MM} extended the result of \cite{BB} to Carnot groups for sub-elliptic analgoues \eqref{Introduction: main equation re-written}. The extension in \cite{MM} required nontrival adaptations that includes strong maximum principle of \cite{BG}, Rademacher theorem for Carnot groups \cite{PP}, sup and inf-convolutions of \cite{WC}. 

As in the Euclidean case \cite{BB}, Manfredi and Mukherjee in \cite{MM} obtains comparison principle by approximating viscosity sub/super solutions with semi-convex/concave functions. Unlike in the Carnot group case we don't have the approximate semi-convex and semi-concave functions to be viscosity sub/super solutions in the entire domain. So, we prove a version of version of approximating the viscosity sub/super solutions from \cite[Proposition]{WC} that retains the viscosity sub/super solution property at selected points in the domain for a perturbed operator; see Section \ref{Approximating viscosity solutions}. We also remark that it seems unlikely that one could prove a general version of \cite[Proposition 3.3]{WC} on a manifold with given H\"ormander vector fields unlike in the case of homogeneous groups. 

To prove the comparison principle, we follow the structure of the proof from \cite{MM}. First the comparison principle is achieved with a non-degeneracy assumption(non-vanishing gradient); see Lemma \ref{Semi convex solutions: Lemma 3.4 from MM}, where a small perturbation with the strict ellipticity \eqref{introduction: equation 1.6 of MM} leads to strict sub-solutions which, by virtue of Jensen's lemma (Lemma \ref{Background: Jensen's lemma}) and Aleksandrov's theorem (Theorem \ref{Background: Aleksandrov's theorem}), are also classical sub-solutions at points of second order differentiability arbitrarily close to their maximal points. Then, we remove the non-degeneracy assumption in Proposition \ref{Proposition 3.5 of MM}, which is the key step to prove Theorem \ref{Introduction: Theorem 1.1 of MM}.

In in the proof of \cite[Proposition 3.5]{MM} (this is the counterpart of Proposition \ref{Proposition 3.5 of MM}) ran into a strange obstacle in the proof where they had to balance between Lipschitz continuity of translations of viscosity/sub-solution and left invariance preserved the fact that translations . Handling this situation was the main novelty of \cite{MM}. In this article we form translation of the viscosity sub/super solutions using exponential maps obtained from $\{X_1, \cdots, X_n\}$ and hence there is no hope for ``left invariance". The proofs in Section \ref{Comparison principle for semi convex/concave solutions} proceeds through proof by contradiction. The main novelty of this article to get contradiction at the shifted points(using exponential map) that is obtained through careful perturbations of the operator $\mathcal{L}$ as constructed in Section \ref{Approximating viscosity solutions}. Other new ideas in the proof include Gr\"onwall type bounds to obtain Lipschitz continuity in \eqref{semi-convex solutions: equation 3.19 of MM}, Rademacher theorem for sub-Riemannian manifold by \cite{CJ}, symmetrized smoothened Carnot-Carath\'eodory metric of \cite{MR1882665}.

  \subsection*{Acknowledgements} The author wishes to thank Brian Street for all the helpful discussions during the preparation of this manuscript. The author was partially supported by NSF DMS 2153069. 
 \section{Background}
 In this section we will introduce the notations and the background required for the rest of the article. We will try to maintain the same notation as \cite{MM} for the ease of comparison with \cite{MM} for the reader. The standard Euclidean inner product on $\RR^n$ is denoted by $\langle \cdot, \cdot\rangle$, the Euclidean vector fields are denoted as $\partial_{x_i}$ for $i\in \{1 ,.\cdots, n\}$ and $\nabla u=(\partial_{x_1}u, \cdots, \partial_{x_n}u)$ is the gradient, $DF$ is the Jacobian matrix for $F:\Omega\to \RR^n$ and $D^2 u=(D\nabla u)= (\partial x_i\partial x_j u)_{i,j}$ is the Hessian. 
\subsection{H\"ormander Vector Fields}\label{Background: Hormander vector fields}
Let $\{X_1,..., X_m\}$ be a set of $C^{\infty}$ vector fields in a neighborhood of $\Omega\subset \bar{\Omega}\subset\subset\mathcal{M}$. From now on, we will denote the vector $(X_1,..., X_m)$ as $\mathfrak{X}$. Let $r$ be such that the span of commutators of up to $r$ spans the tangent at each point of $\Omega$ as in \eqref{Hormander condition}. Since $\bar{\Omega}$ is compact, without loss of generality we will assume that $r$ is finite and minimal. 
 \subsection{Properties of $\mathcal{L}$} We will see the properties of $\mathcal{L}$ that we can derive from the structure conditions we imposed in \eqref{introduction: equation 1.6 of MM} and \eqref{introduction: equation 1.7 of MM}. For symmetric square matrices $A, B\in \RR^{k\times k}$, we shall denote 
 \begin{align}
     A\leq B \ \iff \ \langle A\xi, \xi \rangle\leq \langle B\xi, \xi\rangle\ \forall \ \xi\in \RR^k.
 \end{align}
 The Frobenius inner product of matrices $A, B\in \RR^{k\times k}$ is given by $\Tr(A^TB)$ and the Frobenius norm $\|A\|=\sqrt{\Tr(A^TA)}$. For non-negative matrices $A, Z\geq 0$, it is not difficult to see that $\Tr(AZ)=\|\sqrt{Z}\sqrt{A}\|^2\geq 0$, and hence, we have 
 \begin{align}\label{eq 2.2 of MM}
    \Tr(AY)\leq \Tr(AX), \quad \ A\geq 0, Y\leq X.
 \end{align}
 Now, observe the following. Take $X=\xi\otimes \xi$ in \eqref{introduction: equation 1.7 of MM} for any $\xi\in \RR^m$, we get
 \begin{align}\label{2.3 of MM}
     -\langle A(t\xi), \xi\rangle\leq \frac{-1}{t\phi(t)} \langle A(\xi)\xi, \xi\rangle, 
 \end{align}
 for any $t\geq 1$. Henceforth, the growth of $\mathcal{E}$ as in \eqref{introduction: equation 1.6 of MM} can obtained when the gradient variable is away from the origin in $\RR^m$. Precisely, for any $\theta>0$ and $\xi\in \RR^m$ with $|\xi|\geq 0$, we can use \eqref{2.3 of MM} with $\xi\mapsto \theta\xi/|\xi|$ and $t=|\xi|/\theta$ to obtain 
 \begin{align}\label{2.4 of MM}
     \mathcal{E}(\xi)\geq \frac{|\xi|a_{\theta}}{\theta\phi(\theta/|\xi|)}\geq \frac{a_{\theta}}{\phi(\theta/ |\xi|)}, 
 \end{align}
 where $a_{\theta}:=\inf_{|\zeta|}\mathcal{E}(\zeta)>0$ from the ellipticity condition \eqref{introduction: equation 1.6 of MM}, for all $|\xi|\geq \theta>0$. 

For smooth H\"ormander vector fields $\{X_1, \cdots, X_m\}$ on $\Omega$ and a function $u:\Omega\to \RR$, the maximally sub-ellitpic gradient and the second derivative matrices are dentoed as 
\begin{align*}
    \XX u= (X_1 u, \cdots, X_m u)\ \text{and}\ \XX\XX u=(X_jX_i u)_{i,j}.
\end{align*}
Ofcourse, one could define these even if the vector fields did not satisfy the H\"ormander condition. But, we wouldn't have the maximal subelliptcity of the gradient or the following symmetrized operator
\begin{align*}
    \XX^*\XX u=\frac{1}{2}(X_i^*X_j u +X_j^*X_i u)_{1,j},
\end{align*}
where $X_j^*$ is the adjoint of $X_j$ with respect to $L^2(\Omega).$
The divergence of $F=(f_1, \cdots, f_m)$ with respect to the vector fields is defined by $\text{div}_{\XX}(F)=\sum_{j=1}^{m}X_{j}^* f_j$. 

It is not hard to see that there exists $\sigma; \Omega\to \RR^{n\times m}$, written as 
\begin{align*}
    \sigma(x)=(\sigma^j_i(x))_{i,j}=[\sigma^1(x), \cdots, \sigma^m(x)],
\end{align*}
with $\sigma^j:\Omega\to \RR^n$, such that $X_j=\sigma^j(x)\nabla$. Since the vector fields are smooth, the mapping $x\mapsto \sigma(x)$ is smooth and H\"ormander's bracket generating condition implies $\sigma(x)\neq 0$ for all $x\in \Omega$. For any $u: \Omega\to \RR$, set 
\begin{align}
    \XX u=\sigma(x)^T \nabla u \quad \text{and}\quad \XX^*\XX u= \sigma(x)^T D^2 u\sigma(x)+M(x, \nabla u),
\end{align}
where $M(x, \zeta)\in \RR^{m\times m}$ is linear in $\zeta$ and smooth in $x$, and is built from the derivatives of $\sigma$, i.em .
\begin{align}
    M(x, \zeta)_{i,j} = \frac{1}{2} (\langle D \sigma^j(x)\sigma^i(x), \zeta\rangle +\langle D\sigma^i(x)\sigma^j(x), \zeta\rangle ). 
\end{align}
With this, we can re-write the operator $$\mathcal{L}u= -\Tr(A(\XX u)\XX^*\XX u)+H(\XX u)$$
as 
\begin{align}\label{euclidean form of operator}
    -\Tr(A(\sigma^T \nabla u)[\sigma^T D^2 u \sigma +M(x, \nabla u)]) + H(\sigma^T \nabla u).
\end{align}
Observe that we didn't need $X_i^*=-X_i$(as in the case of left invariant vector fields on Carnot groups) for the non-divergence form of our operator.

Since the domain $\Omega \subset \RR^n$ bounded, for any function $f:\Omega\to \RR$, we shall denote the set of maximum and minimum points as 
\begin{align}
    \argmax_{\Omega}(f):= \{x\in \Omega:f(x)=\max_{\Omega}f\}, \quad     \text{argmin}_{\Omega}(f):= \{x\in \Omega:f(x)=\min_{\Omega}f\}.
\end{align}
If the function does not have any local maxima or minima in $\Omega$, then the respective sets of the above are empty. Sometimes the subscript is dropped when the context for the corresponding domain of argmax or armin is clear. 
 \subsection{Viscosity solutions}
The goal of this section is to define viscosity sub/super-solutions for the partial differential operator we are concerned about and then state a strong maximum principle due to Bardi-Goffi \cite{BG} for a class of fully nonlinear subelliptic operators(\eqref{Introduction: main equation re-written} is included in this). For classical theory of viscosity solutions see \cites{ACJ, CIL}. We will describe viscosity sub/super solutions to \eqref{Introduction: main equation re-written}. Observe that $\mathcal{L}u=-\Tr(A(\mathfrak{X}u)\mathfrak{X}^*\mathfrak{X}u)+ H(\mathfrak{X}u)$ is of the form 
\begin{align}\label{general fully nonlinear second order}
    G(x, u, \XX u, \XX^*\XX u)=0,
\end{align}
for some function $G: \bar{\Omega}\times \RR\times \RR^m\times \RR^{m\times m}\to \RR$. Since we have \eqref{euclidean form of operator} form of $\mathcal{L}$ we define the notion of viscosity in the same way as in Euclidean space. Let us denote the classes of upper and and lower semi-continuous functions as USC and LSC. For any $w:\Omega\to \RR$ and $x\in \Omega$, let us denote the class of test functions $\mathcal{A}_{x}^{\pm}(\Omega)(w, \Omega)$ as 
\begin{align}
    \mathcal{A}_{x}^{+}(w, \Omega)=\{ \phi\in C^2(\Omega): x\in \argmax_{\Omega}(w-\phi), \nabla\phi \neq 0\}, 
\end{align}
and $\mathcal{A}_{x}^{-}(w, \Omega)$ defined similarly, replacing $\argmax$ with $\text{argmin}$. Observe that for any invertible $\Theta:\Omega\to \RR$, we have $\phi\in \mathcal{A}_{x}^{\pm}(w, \Omega)$ if and only if $\phi\circ \Theta^{-1}\in \mathcal{A}_{\Theta(x)}^{\pm}(w\circ \Theta^{-1}, \Theta(\Omega))$.
\begin{definition}
    For equation \eqref{general fully nonlinear second order}, $u\in \text{USC}(\Omega)$(resp $u\in \text{LSC}(\Omega)$) is called a viscosity subsolution(resp. supersolution) at $x\in \Omega$ if for every $\phi\in \mathcal{A}_{x}^{+}(w, \Omega)$(resp. $\mathcal{A}_{x}^{-1}(w, \Omega)$), we have
    \begin{align*}
        G(x, \phi(x), \XX\phi(x), \XX^*\XX\phi(x))\leq 0\quad (\text{resp. $\geq 0$}),
    \end{align*}
    which is referred as $G(x, u, \XX u, \XX^*\XX u)\leq 0$ (resp. $\geq0$) in the viscosity sense. If both of the above inequalities hold simultaneously for respective test functions in $\mathcal{A}_{x}^+(u, \Omega)$ and $\mathcal{A}_{x}^-(u, \Omega)$, then $u$ is called a viscosity solution of equation \ref{general fully nonlinear second order}.
\end{definition}

Thus, the viscosity sub/super solution of $u$ \eqref{Introduction: main equation re-written} at $x\in \Omega$ implies $\mathcal{L}\phi(x)\leq 0$(resp. $\geq 0$) for all $\phi\in \mathcal{A}_{x}^+(u, \Omega)$(resp. $\mathcal{A}_x^{-}(u, \Omega)$). 

Following \cite{MM},  we need the following strong maximum principle due to Bardi-Goffi\cite{BG} for Proposition \ref{Proposition 3.5 of MM}
\begin{theorem}(Strong Maximum principle \cite{BG})\label{stron maximum principle} Given smooth vector fields $X_1, \cdots X_m$ satisfying H\"ormander's bracket generating condition \eqref{Hormander condition}, if a function $G:\bar{\Omega}\times \RR\times \RR^m\times \RR^{m\times}\to \RR$ satisfies the following:
\begin{enumerate}
    \item $G$ is lower smincontinuous and for all $r\leq s$ and symmetric matrices $Y\leq X$, 
    \begin{align*}
        G(x, r, \xi, X)\leq G(x, s, \xi, Y);
    \end{align*}
    \item there exists $\phi: (0,1]\to (0,\infty)$ such that for all $\lambda \in (0,1], x\in \Omega, r\in [-1, 0], \xi\in \RR^{m}\backslash\{0\}$ and symmetric $X\in \RR^{m\times m},$ we have 
    \begin{align*}
        G(x, \lambda r, \lambda \xi, \lambda X)\geq \phi(\lambda)G(x, r, \xi, X);
    \end{align*}
    \item for all $x\in \Omega, \xi\in \RR^m\backslash \{0\}, X\in \RR^{m\times m}$, the following ellipticity condition holds, 
    \begin{align*}
        \sup_{\gamma>0}G(x, 0, \xi, X-\gamma \xi\otimes \xi)>0;
    \end{align*}
    then, any viscosity sub-solution(resp. super-solution) of the equation $G(x, u, \XX u, \XX^*\XX u)$ that attains a non-negative(resp. non-positive) maximum (resp. minimum) in $\Omega$ is constant. 
\end{enumerate}
\end{theorem}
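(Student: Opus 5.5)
This is the Bardi--Goffi strong maximum principle, so the plan is the classical Hopf-type argument adapted to H\"ormander vector fields. I treat the subsolution case; the supersolution case follows by applying it to $\tilde G(x,r,\xi,X)=-G(x,-r,-\xi,-X)$ and $-u$.

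Suppose $u$ is a viscosity subsolution attaining a maximum $M\ge 0$ at an interior point. Let $K=\{x\in\Omega: u(x)=M\}$, which is nonempty and relatively closed by upper semicontinuity. Since $\Omega$ is connected, it suffices to show $K$ is open. If not, there is a boundary point of $K$ in $\Omega$. Following the standard construction, we then find a small Euclidean ball $B$ with $\bar B\subset\Omega$ and $B\cap K=\emptyset$, touching $K$ at exactly one point $x_0\in\partial B$. We may shrink $B$ so that it is interior-tangent at $x_0$ in this sense.

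The crucial use of the H\"ormander condition is the next step. At $x_0$ the interior normal $\nu=x_0-c$ of $B$ must satisfy $\sigma(x_0)^T\nu\neq 0$, i.e.\ some $X_j$ is transversal to $\partial B$ at $x_0$. This is Bony's argument: if every $X_j$ were tangent to the spheres at every such touching point, then the set $K$ would be invariant under the flows of all $X_j$. By Chow--Rashevskii, these flows connect $K$ to points of $B$, contradicting $B\cap K=\emptyset$. To make this precise I would follow Bony's lemma on ``exterior normals'' of closed sets: if $K$ is closed and every $X_j$ satisfies $\langle X_j(y),\nu\rangle=0$ at all points $y$ of $K$ with exterior normal $\nu$, then $K$ is invariant under the flows of the $X_j$, hence under brackets, hence $K=\Omega$. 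So we may assume $\xi_0:=\sigma(x_0)^T\nu\neq 0$.

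With this in hand, build the barrier $w(x)=e^{-\alpha|x-c|^2}-e^{-\alpha R^2}$ on an annulus $A=B(x_0,\rho)\cap B$. A direct computation via \eqref{euclidean form of operator} gives, at points near $x_0$,
\begin{align*}
\XX w=-2\alpha e^{-\alpha|x-c|^2}\sigma^T(x-c)\approx \lambda\xi_0,\qquad \XX^*\XX w\approx \lambda\bigl(4\alpha\, \sigma^T(x-c)\otimes\sigma^T(x-c)-2\alpha\sigma^T\sigma+O(1)\bigr),
\end{align*}
with $\lambda=e^{-\alpha|x-c|^2}$. Condition (3) supplies $\gamma>0$ with $G(x_0,0,\xi_0,X_0-\gamma\xi_0\otimes\xi_0)>0$. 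Choosing $\alpha$ large then makes the $\xi\otimes\xi$ term dominate, so the degenerate ellipticity in (1) gives $G(x,0,\XX(-w),\XX^*\XX(-w))>0$ near $x_0$. Here lower semicontinuity keeps positivity in a neighborhood, and the homogeneity (2), with $\phi>0$, lets me factor out the scaling $\lambda$ and the sign of the $u$-argument $r=u-M\in[-1,0]$ after normalizing. Finally, take $\varepsilon>0$ small so that $u\le M-\varepsilon w$ on $\partial A$; this holds because $u<M$ on the compact part $\partial A\cap \bar B$ away from $x_0$, and $w\le 0$ on $\partial B$. Then $u-(M-\varepsilon w)$ attains a nonnegative maximum inside $A$, at a point where the test function has nonzero gradient. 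The subsolution inequality then contradicts the strict inequality $G>0$, after using the monotonicity in $r$ (since $u\le M$ and $M\ge0$).

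The main obstacle is the transversality step: showing that at some touching point some $X_j$ is not tangent to the ball. This is where H\"ormander's condition enters, via Bony's invariance lemma. The barrier computation is routine once $\xi_0\neq 0$. A second delicate point is combining (2) with the $r$-variable, which is why the hypotheses restrict to $r\in[-1,0]$ and a nonnegative maximum.
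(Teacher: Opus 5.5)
The paper gives no proof of this theorem; it simply quotes Bardi--Goffi \cite{BG}. Your route is essentially the Bardi--Goffi argument, which propagates maxima along the $X_j$: a Hopf-type barrier at a ball touching $K=\{u=M\}$ at a single point, Bony's invariance lemma to produce a touching ball at which some $X_j$ is transversal, and Chow--Rashevskii to conclude $K=\Omega$. For subsolutions this works after the repairs in the second paragraph.

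\textbf{The supersolution case is a genuine gap.} Passing to $\tilde G(x,r,\xi,X)=-G(x,-r,-\xi,-X)$ and $-v$ is legitimate only if $\tilde G$ satisfies (1)--(3). That requires three things of $G$: it is upper semicontinuous; $G(x,\lambda r,\lambda\xi,\lambda X)\le\phi(\lambda)G(x,r,\xi,X)$ for $r\in[0,1]$; and $\inf_{\gamma>0}G(x,0,\xi,X+\gamma\,\xi\otimes\xi)<0$. None of these follows from the stated hypotheses. Without them the ``resp.'' claim is actually false. Take $G(x,r,\xi,X)=\max\{-\Tr X,0\}$: it satisfies (1)--(3) with $\phi(\lambda)=\lambda$, but $G\ge 0$. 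So with $X_j=\partial_{x_j}$, every function (e.g.\ $|x|^2-1$ on the unit ball) is a viscosity supersolution, including ones with a negative interior minimum. You must add the mirrored conditions as explicit hypotheses, as Bardi--Goffi do; with them your reduction is fine.

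\textbf{The barrier step also needs repair.} Your displayed computation is off. With $\eta=\sigma^T(x-c)$ one has $\XX w=-2\alpha e^{-\alpha|x-c|^2}\eta$ and $\XX^*\XX w=e^{-\alpha|x-c|^2}\bigl(4\alpha^2\eta\otimes\eta-2\alpha\sigma^T\sigma-2\alpha M(x,x-c)\bigr)$. You should normalize by the full factor $\lambda=2\varepsilon\alpha e^{-\alpha|\hat x-c|^2}$, which is $\le 1$ once $\varepsilon$ is small. Then for $\varphi=M-\varepsilon w$ you get $\XX\varphi=\lambda\eta$ and $\XX^*\XX\varphi=\lambda\bigl(X_0-2\alpha\,\eta\otimes\eta\bigr)$, where $X_0=\sigma^T\sigma+M(x,x-c)$ does not depend on $\alpha$. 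Otherwise the point at which you invoke (3) and lower semicontinuity moves with $\alpha$.

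For the $r$-slot, first lower $u(\hat x)$ to $u(\hat x)-M$ using (1) and $M\ge 0$. Since (3) only gives positivity at $r=0$, you then need the normalized argument $(u(\hat x)-M)/\lambda$ to be \emph{small}, not merely in $[-1,0]$. This follows from $0\le M-u(\hat x)\le\varepsilon w(\hat x)$ and $w(\hat x)\le e^{-\alpha|\hat x-c|^2}$, which give $|u(\hat x)-M|/\lambda\le 1/(2\alpha)$.

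Then proceed as follows:
\begin{enumerate}
\item Take $\gamma_0$ from (3) at the point $(x_0,0,\eta(x_0),X_0(x_0))$.
\item Use (1) to pass to any $\alpha$ with $2\alpha\ge\gamma_0$.
\item Choose $\alpha$ large, then $\rho$ small, then $\varepsilon$ small.
\end{enumerate}

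Finally, run the argument on the full ball $B(x_0,\rho)$ rather than on $B(x_0,\rho)\cap B$. On the latter region $x_0$ lies on the boundary and the maximum of $u-\varphi$ may occur there, so ``attains its maximum inside $A$'' is not justified. This is harmless, since $w<0$ outside $B$ makes $x_0$ a genuine local maximum, but it should be said.
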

\eqref{Introduction: main equation re-written} is an example of Theorem \ref{stron maximum principle}, which we state in the following corollary.
\begin{corollary}\label{Background: Corolllary 2.3 of MM}\cite[Corollary 2.3]{MM}
    Given the equation \eqref{Introduction: main equation re-written} with $A:\RR^m\to \RR^{m\times m}$ and $H: \RR^m \to \RR$ satisfying \eqref{introduction: equation 1.6 of MM} and \eqref{introduction: equation 1.7 of MM}, any viscosity sub-solution (resp. super-solution) that attains a on-negative(resp. non-positive) maximum(reps. minimum) in $\Omega$ is constant. 
\end{corollary}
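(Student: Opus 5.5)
The corollary follows once we verify hypotheses (1)--(3) of Theorem \ref{stron maximum principle} for
\begin{align*}
G(x,r,\xi,X):=-\Tr(A(\xi)X)+H(\xi),
\end{align*}
which has no dependence on $x$ or $r$. Then $\mathcal{L}u=G(x,u,\XX u,\XX^*\XX u)$, and the conclusion about viscosity sub/super-solutions attaining a nonnegative maximum or nonpositive minimum is exactly the conclusion of Theorem \ref{stron maximum principle}.

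\textbf{Condition (1).} Lower semicontinuity is immediate, since $A$ and $H$ are continuous, so $G$ is continuous. Monotonicity in $r$ is trivial because $G$ does not depend on $r$. For $Y\le X$ we have $A(\xi)\ge 0$, so \eqref{eq 2.2 of MM} gives $\Tr(A(\xi)Y)\le \Tr(A(\xi)X)$. Hence $G(x,r,\xi,X)\le G(x,s,\xi,Y)$.

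\textbf{Condition (2).} Fix $\lambda\in(0,1]$ and $\xi\neq0$, and apply \eqref{introduction: equation 1.7 of MM} with $t=1/\lambda\ge1$ at the point $\lambda\xi$, so that $t\lambda\xi=\xi$. Using linearity of the trace in $X$, this gives
\begin{align*}
-\Tr(A(\xi)X)\le \frac{-\Tr(A(\lambda\xi)X)}{\lambda^{-1}\phi(\lambda)},\qquad H(\xi)\le \frac{H(\lambda\xi)}{\phi(\lambda)}.
\end{align*}
Multiplying the first inequality by $\phi(\lambda)$ gives $-\Tr(A(\lambda\xi)\lambda X)\ge \phi(\lambda)\bigl(-\Tr(A(\xi)X)\bigr)$. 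The second gives $H(\lambda\xi)\ge\phi(\lambda)H(\xi)$. Adding these yields
\begin{align*}
G(x,\lambda r,\lambda\xi,\lambda X)\ge \phi(\lambda)G(x,r,\xi,X),
\end{align*}
with the same $\phi$. This is where care is needed: the scaling in \eqref{introduction: equation 1.7 of MM} must be read in the direction that matches hypothesis (2), with the factor $t$ absorbed by the $\lambda$ multiplying $X$. I would double-check that the factor $t\phi(1/t)$ turns into exactly $\phi(\lambda)$ after this multiplication.

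\textbf{Condition (3).} For $\xi\neq0$,
\begin{align*}
G(x,0,\xi,X-\gamma\,\xi\otimes\xi)=-\Tr(A(\xi)X)+H(\xi)+\gamma\,\Tr\bigl(A(\xi)\,\xi\otimes\xi\bigr).
\end{align*}
The last term equals $\gamma\langle A(\xi)\xi,\xi\rangle=\gamma\,\mathcal{E}(\xi)$, and $\mathcal{E}(\xi)>0$ by \eqref{introduction: equation 1.6 of MM}. Letting $\gamma\to\infty$ shows the supremum is $+\infty>0$.

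With all three hypotheses verified, Theorem \ref{stron maximum principle} applies and the corollary follows. The main (mild) obstacle is the bookkeeping in condition (2): if the inequality came out reversed, I would instead redefine the comparison function in Theorem \ref{stron maximum principle} as $\lambda\mapsto\phi(\lambda)$ or $\lambda\phi(\lambda)$, as needed.
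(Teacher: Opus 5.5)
Your proposal is correct and is essentially the paper's proof: the paper also verifies hypotheses (1)--(3) of Theorem \ref{stron maximum principle} for $G(x,r,\xi,X)=-\Tr(A(\xi)X)+H(\xi)$, using \eqref{eq 2.2 of MM}, the rescaling $t=1/\lambda$ of \eqref{introduction: equation 1.7 of MM}, and $\mathcal{E}(\xi)>0$ (it gives an explicit threshold for $\gamma$ where you let $\gamma\to\infty$). Your bookkeeping in condition (2) comes out exactly right, since $t\phi(1/t)=\phi(\lambda)/\lambda$ yields precisely the paper's inequality $-\lambda\Tr(A(\lambda\xi)X)+H(\lambda\xi)\ge\phi(\lambda)\bigl(-\Tr(A(\xi)X)+H(\xi)\bigr)$, so the fallback you mention is not needed.
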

\begin{proof}
    It is not hard to see that $G(x, r, \xi, X)=-\Tr(A(\xi)X)+H(\xi)$ satisfies the hypotheses of Theorem \ref{stron maximum principle}. Since $A(\xi)$ is symmetric and positive semi-definite, \eqref{eq 2.2 of MM} implies (1). Taking $\lambda \mapsto \frac{1}{\lambda}$ and $\xi/\lambda\to \xi$ on the scaling condition \eqref{introduction: equation 1.7 of MM} leads to 
    \begin{align*}
        -\lambda \Tr(A(\lambda \xi)X)+ H(\lambda \xi) \geq \phi(\lambda)\left(-Tr(A(\xi)X)+H(\xi)\right),
    \end{align*}
    and hence we have (2). The ellipticity condition \eqref{introduction: equation 1.6 of MM} leads to (3) because 
    \begin{align*}
        G(x, 0, \xi, X-\gamma\xi\otimes \xi)=\gamma\langle A(\xi)\xi, \xi\rangle-\Tr(A(\xi)X) +H(\xi)>0
    \end{align*}
    for all $\xi\in \RR^m\backslash\{0\}$, whenever $\gamma> (\Tr(A(\xi)X)-H(\xi))/\langle A(\xi)\xi, \xi\rangle$. Hence we are done.

\end{proof}
\subsection{Semi-concave and semi-convex functions}
Since we are dealing with weak solutions and they need not have enough regularity. So, we approximate our viscosity sub/super-solutions by functions that have more regularity. In particular, by semi-convex and semi-concave functions.  

In this section we assume that $\bar{\Omega}$ is contained in a coordinate patch of the manifold $\mathcal{M}$(since later on we will only be concered about local semi-convexity and local semi-concavity; see Section \ref{Approximating viscosity solutions})
\begin{definition}
    A function $w\in C(\bar{\Omega})$ is called semi-convex if there exists $\Lambda>$ such that $x\mapsto w(x)+\frac{1}{2}\Lambda |x|^2$ is convex; $w$ is called semi-concave if $-w$ is semi-convex. See \cite{BB} for the proof of the following:
    \begin{enumerate}
        \item (Differentiability at maximal points) Let $u,v\in C(\bar{\Omega})$ be a semi-convex and semi-concave function respectively. Then, both $u, v$ are differentiable at points in $\argmax_{\Omega}(u-v)$.
        \item (Partial continuity of the gradient) Let $w\in C(\bar{\Omega})$ be a semi-convex or semi-concave function that is differentiable at $x\in \Omega$ and at the points of a sequence $\{x_k\}_{k\in \NN}$ such that $x_k\to x$ as $k\to \infty$. Then, we have $\nabla w(x_k)\to \nabla w(x)$.
    \end{enumerate}
It should  be mentioned that if $w\in C^2(\Omega)$ then semi-convexity is equivalent to $-\Lambda \mathbb{I}\leq D^2w$. Therefore, for second order differentiability the classical theorem due to Aleksandrov for convex functions can be stated for semi-convex/semi-concave functions as the following; see  \cite[Theorem A.2]{CIL}
\end{definition}
\begin{theorem}\label{Background: Aleksandrov's theorem}(\text{Aleksandrov}) If $w: \RR^n\to \RR$ is semi-convex, it is twice differentiable a.e.

\end{theorem}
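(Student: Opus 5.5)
The plan is to deduce the statement from the convex case, via the standard characterization of semi-convexity. Set $\tilde w(x) = w(x) + \frac{1}{2}\Lambda |x|^2$. By the definition of semi-convexity, $\tilde w$ is convex on $\RR^n$. Since $x\mapsto \frac{1}{2}\Lambda|x|^2$ is a smooth quadratic polynomial, $w$ is twice differentiable at a point $x$ if and only if $\tilde w$ is. More precisely, a second order Taylor expansion
\begin{align*}
\tilde w(y) = \tilde w(x) + \langle p, y-x\rangle + \tfrac{1}{2}\langle Q(y-x), y-x\rangle + o(|y-x|^2)
\end{align*}
for $\tilde w$ gives the same kind of expansion for $w$, with $p$ replaced by $p - \Lambda x$ and $Q$ replaced by $Q - \Lambda \mathbb{I}$. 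It therefore suffices to prove the classical Aleksandrov theorem: a convex function $f:\RR^n\to\RR$ admits such an expansion at almost every point.

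For the convex case I would follow the standard argument, as in \cite[Theorem A.2]{CIL}. It has four steps.

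\begin{enumerate}
\item A convex $f$ is locally Lipschitz. By Rademacher's theorem, $\nabla f$ exists a.e. and lies in $L^\infty_{loc}$.
\item Convexity makes the distributional Hessian $D^2 f$ a nonnegative matrix-valued Radon measure. This is obtained by testing second difference quotients against nonnegative test functions. The measure has a Lebesgue decomposition $D^2 f = D^2_{ac} f\, dx + \mu_s$.
\item At Lebesgue points of $\nabla f$ and of $D^2_{ac} f$ where, in addition, $|\mu_s|(B_r(x)) = o(r^n)$ (this holds a.e. by differentiation of measures), one shows that
\begin{align*}
g(y) = f(y) - f(x) - \langle \nabla f(x), y-x\rangle - \tfrac{1}{2}\langle D^2_{ac} f(x)(y-x), y-x\rangle
\end{align*}
satisfies $\frac{1}{r^{n+2}}\int_{B_r(x)} |g| = o(1)$. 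This follows from a Poincaré-type estimate applied to the mollified function.
\item Upgrade this $L^1$ average bound to a pointwise bound $\sup_{B_{r/2}(x)}|g| = o(r^2)$. For this one uses that $g$ is a convex function minus a quadratic, so it is semi-convex with a one-sided Lipschitz bound on its gradient.
\end{enumerate}

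The main obstacle is the $L^1$-to-$L^\infty$ upgrade in the last step. There I would argue as follows: if $g(y_0)\ge \delta r^2$ at some $y_0\in B_{r/2}(x)$, then semi-convexity together with the gradient bound at the Lebesgue point forces $g \gtrsim \delta r^2$ on a ball of radius comparable to $\delta^{1/2} r$ around $y_0$. That contradicts the $o(r^{n+2})$ integral bound. The case of a large negative value of $g$ is handled symmetrically, using convexity to bound $g$ from below by a supporting plane.

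Since the statement is classical and the paper explicitly cites \cite[Theorem A.2]{CIL}, in the text I would present only the reduction in the first paragraph and refer to that source for the convex case.
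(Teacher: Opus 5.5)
Your proposal is correct and is essentially what the paper does: the paper gives no argument of its own and simply cites \cite[Theorem A.2]{CIL}, and your reduction via $\tilde w = w+\frac{1}{2}\Lambda|x|^2$ (which shifts $p$ to $p-\Lambda x$ and $Q$ to $Q-\Lambda\mathbb{I}$) is exactly the routine step from the convex statement there to the semi-convex one.

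One caution on your optional sketch of the convex case: in step 4 a supporting plane only bounds $g$ from below, so it cannot spread a large negative value of $g$ over a ball. That half is normally handled in one of two ways: by the two-sided Lipschitz estimate $\sup_{B_r(x)}|\nabla(f-\ell)|\lesssim r^{-n-1}\int_{B_{2r}(x)}|f-\ell|$ for the convex function $f-\ell$, with $\ell(y)=f(x)+\langle \nabla f(x),y-x\rangle$ (as in Evans--Gariepy), or by convexity along segments combined with the upper bound already obtained. Since you defer to \cite{CIL} for the convex case, this does not affect your proof.
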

Even if $w$ is twice differentiable almost everywhere, it does not guarantee the twice differentiability at maximal points icen the set $\argmax(w)$ can be of measure zero and therefore, may remain entirely in the complement of the twice differentiable subset. The follwoing lemma shows that linear pertubrations can be chosen without hampering the second-order differential such that points arbitrarily close to maximal points of $w$ are within the twice differentiability subset and are also themselves the maximal points of the perturbations; see \cite[Lemma A.3]{CIL} for details. 
    \begin{lemma}(\text{Jensen's lemma})\label{Background: Jensen's lemma} Let $w: \RR^n\to \RR$ be semi-convex, $\hat{x}\in \argmax(w)$ be an arbitrary maximal point and $w_p(x)= w(x)+ p\cdot x$ for any $p\in \RR^n$. Then, for any $r,\delta>0$, the set 
    \begin{align}
        K_{r,\delta}(\hat{x})= \bigcup_{|p|<\delta} B_{r}(\hat{x})\cap \argmax(w_p)
    \end{align}
       is of positive Lebesgue measure.  
    \end{lemma}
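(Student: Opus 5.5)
This is Jensen's lemma for semi-convex functions (Crandall–Ishii–Lions, Lemma A.3), and I would follow the classical argument. Since $w$ is semi-convex, after a harmless localization I may replace $w$ by $w(x)-|x-\hat{x}|^4$ or a similar strictification. This makes $\hat{x}$ a strict maximum, and the second-order behaviour near $\hat{x}$ is unchanged up to a smooth term. Shrinking $r$ if needed, I then have $w(\hat{x})-\max_{\partial B_r(\hat{x})} w=:\eta>0$. Proving the claim for the strictified function gives it for the original: the maximizers of the perturbed functions near $\hat{x}$ are what we count, and the quartic term only shifts the family of linear perturbations by a bounded smooth map.

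The first step is to choose $\delta'\le\delta$ with $\delta' r<\eta/2$. Then for every $|p|<\delta'$, the function $w_p$ restricted to $\bar{B}_r(\hat{x})$ attains its maximum at an interior point, because $|p\cdot(x-\hat{x})|\le \delta' r$ on the ball. Hence the set $K$ of interior maximum points of $w_p$ over $B_r(\hat{x})$, taken over $|p|<\delta'$, is nonempty, and its elements are local maxima of $w_p$. If necessary I would restrict attention to $\bar{B}_r(\hat{x})$ so that these are genuine argmax points; the statement's $\argmax$ should be read locally, which is how it is used.

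The key step is to show $|K|>0$ by a measure estimate of gradients. Suppose first that $w$ is $C^2$, so semi-convexity gives $D^2w\ge -\Lambda \mathbb{I}$. At a maximum point $y$ of $w_p$ we have $\nabla w(y)=-p$ and $D^2 w(y)\le 0$, hence $-\Lambda\mathbb{I}\le D^2w(y)\le 0$ and $|\det D^2 w(y)|\le \Lambda^n$. The gradient map $-\nabla w$ sends $K$ onto the ball $B_{\delta'}(0)$. The area formula then gives
\begin{align*}
|B_{\delta'}(0)|\le \int_K |\det D^2 w|\,dx\le \Lambda^n |K|,
\end{align*}
so $|K|\ge (\delta'/\Lambda)^n|B_1|$. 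This bound depends only on $\Lambda,\delta',n$, not on the smoothness of $w$.

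For general semi-convex $w$, I would mollify to get $w^\epsilon\to w$ uniformly on $\bar B_r(\hat{x})$. The mollified functions have the same semi-convexity constant $\Lambda$, and for small $\epsilon$ they still have the gap property with $\eta/2$. The sets $K^\epsilon$ then satisfy the uniform lower bound above. Uniform convergence implies that $\limsup_\epsilon K^\epsilon$ is contained in a compact subset of the set of maximum points of $w_p$ over $\bar{B}_r$, for $|p|\le\delta'$, and that set lies in $K_{r,\delta}(\hat{x})$. Taking the closed version and using upper semicontinuity of Lebesgue measure on decreasing unions of compacts gives $|K_{r,\delta}(\hat{x})|\ge (\delta'/\Lambda)^n|B_1|>0$.

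I expect this passage to the limit to be the main obstacle. One must check that maximizers of $w^\epsilon_p$ converge to maximizers of $w_p$, and handle the closedness of the set via $\bigcap_k \overline{\bigcup_{\epsilon<1/k}K^\epsilon}$. A secondary point is that the area formula needs $\nabla w^\epsilon$ to be Lipschitz, which holds because $w^\epsilon$ is smooth.
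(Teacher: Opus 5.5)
\textbf{Gap: the strictification step.} The paper gives no argument beyond citing \cite[Lemma A.3]{CIL}. Your $C^2$ area-formula estimate and your mollification and limsup-of-compacts step are exactly that proof, and they are correct when $\hat{x}$ is a \emph{strict} local maximum. This also needs $\argmax$ read over $\bar{B}_r(\hat{x})$, as you say, and $|p|\le\delta'<\delta$, so that limits of slopes remain admissible.

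The gap is your first step. You claim that proving the lemma for $\tilde{w}=w-|x-\hat{x}|^4$ ``gives it for the original'', and this is false. Let $y$ be a local maximum of $\tilde{w}+p\cdot x$. The first-order part of the quartic at $y$ can be absorbed into a new slope $p'$. Its Hessian $-D^2|x-\hat{x}|^4(y)\le 0$, of size $|y-\hat{x}|^2$, cannot be absorbed, and it may be exactly what makes $y$ a maximum. So such a $y$ need not be a local maximum of any $w+p'\cdot x$.

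\textbf{The non-strict statement is false.} No reduction can repair this step. Take $w(x)=-x_2^2e^{x_1}$ on $\RR^2$, multiplied by a nonnegative cutoff equal to $1$ near the origin so that it is globally semi-convex and $w\le 0$, and take $\hat{x}=0$.
\begin{itemize}
\item Off the line $\{x_2=0\}$ we have $\det D^2w=-2x_2^2e^{2x_1}<0$, so no $w_p$ has a local maximum there.
\item On that line $\nabla w=0$, which forces $p=0$.
\end{itemize}
Hence $K_{r,\delta}(0)\subset\{x_2=0\}$ is a null set for every $r$ below the cutoff radius and every $\delta>0$, under either the global or the local reading of $\argmax$. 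Meanwhile $\tilde{w}$ does have a Jensen set of positive measure.

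So the hypothesis ``strict local maximum'' in \cite{CIL} is essential. The correct way to handle a non-strict maximum in the later applications is to apply the lemma to $w-|x-\hat{x}|^4$ itself, not to transfer the conclusion back to $w$. One then carries the quartic's gradient and Hessian, which are $O(|z-\hat{x}|^3)$ and $O(|z-\hat{x}|^2)$, through the subsequent estimates.
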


From Aleksadrov's theorem $D^2w$ exists a.e. in $K_{r, \delta}(\hat{x})$ for any small enough $r, \delta>0$ even though it may not exist at $\hat{x}\in \argmax(w)$. Also, note that $D^2 w_{p}=D^2w_{p'}$ wherever it exists for any $p, p'\in \RR^n$. Therefore, we can always select a close enough point $z\in \RR^n$ with $|z-\hat{x}|$ such that $D^2w$ exists at $z$ and a small enough perturbation $w_p$ such that $|p|<\delta$ and $z\in \argmax(w_p)$; thus $z$ is a maximal point of $w_p$ with both first and second derivatives. 

\subsection{Carnot-Carathéodory Geometry}
We need the following smooth version of the Carnot-Carath\'eodory metric from \cite{MR1882665}. Let $\rho$ denote the Carnot-Carathéodory defined by the vector fields $\{X_1,\cdots, X_m\}$. Then, we have the following two technical lemmas. First, we need a smoothened version of the Carnot-Carathéodory metric that is due to Nagel and Stein \cite{MR1882665}. 
\begin{prop}\cite{MR1882665}\label{Approximation viscosity solutions: smooth metric}
    There exists a function $d: M\times M\to \RR^+$ such that:
    \begin{align*}
        d(x,y) \approx \rho(x,y)
    \end{align*}
    and for $x\neq y$, 
    \begin{align}\label{approximating viscosity solutions: derivative estimate for d}
        \left|D_{x}^{\alpha}D^{\beta}_{y} d(x,y)\right|\lesssim d(x,y)^{1-|\alpha|-|\beta|}.
    \end{align}
    By replacing $d(x,y)$ with $d(x,y)+d(y,x)$ we may assume that $d(x,y)=d(y,x)$. By multiplying $d$ by a fixed constant, we may also assume: 
    \begin{align*}
        \underset{x\neq y}{\sup}\ \sum_{|\alpha|=1} \left|D_{y}^{\alpha}d(x,y)\right|\leq 1. 
    \end{align*}
\end{prop}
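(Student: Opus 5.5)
The plan is to build $d$ by hand as a dyadic sum of smooth cutoff functions adapted to Carnot--Carath\'eodory balls. I interpret the derivatives $D_x^\alpha D_y^\beta$ as derivatives along the vector fields $X_1,\dots,X_m$, as in \cite{MR1882665}, and I work on a fixed relatively compact neighborhood of $\bar\Omega$. There the ball--box theorem of Nagel--Stein--Wainger applies uniformly. For each $x$ and each $0<\delta\le \delta_0$ there is a scaling map $\Phi_{x,\delta}$ from the Euclidean unit ball onto a set comparable to $B_\rho(x,\delta)$. The pulled-back, rescaled fields $\delta\,\Phi_{x,\delta}^*X_j$ are smooth with uniform $C^k$ bounds and satisfy H\"ormander's condition uniformly in $(x,\delta)$. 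We also have the doubling property $Vol(B_\rho(x,2\delta))\lesssim Vol(B_\rho(x,\delta))$.

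\textbf{Step 1 (adapted bumps).} For each $j\ge j_0$ I would choose a maximal $2^{-j}$-separated set $\{x_i^j\}$. Doubling gives a uniformly bounded overlap for the balls $B_\rho(x_i^j,C2^{-j})$. Let $\eta_i^j(x)$ be the push-forward under $\Phi_{x_i^j,C2^{-j}}$ of a fixed Euclidean bump. The uniform control of the scaled fields then gives
\[
|X^\alpha\eta_i^j|\lesssim 2^{j|\alpha|}.
\]
Next set
\[
\psi_j(x,y)=\chi\Big(\sum_i\eta_i^j(x)\eta_i^j(y)\Big),
\]
where $\chi$ is smooth, $\chi=1$ on $[c,\infty)$ and $\chi=0$ near $0$. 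Adjusting constants, this gives $\psi_j=1$ when $\rho(x,y)<2^{-j}$, $\psi_j=0$ when $\rho(x,y)>C_12^{-j}$, and
\[
|X_x^\alpha X_y^\beta\psi_j|\lesssim 2^{j(|\alpha|+|\beta|)}.
\]

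\textbf{Step 2 (definition and comparability).} Define
\[
d(x,y)=\sum_{j\ge j_0}2^{-j}\bigl(1-\psi_j(x,y)\bigr).
\]
Suppose $\rho(x,y)\approx 2^{-k}$. For $j\le k-C$ we have $\psi_j=1$, so those terms vanish. For $j\ge k+C$ we have $\psi_j=0$, so those terms contribute exactly $2^{-j}$. Hence $d(x,y)\approx\sum_{j\ge k}2^{-j}\approx 2^{-k}\approx\rho(x,y)$. For $\rho\gtrsim 2^{-j_0}$ comparability is trivial by compactness.

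\textbf{Step 3 (derivative bounds).} Differentiating kills every term that is locally constant. So $X_x^\alpha X_y^\beta d$ is a sum of only $O(1)$ terms, namely those with $|j-k|\le C$. Each of these is bounded by $2^{-j}2^{j(|\alpha|+|\beta|)}\approx d^{1-|\alpha|-|\beta|}$, which is \eqref{approximating viscosity solutions: derivative estimate for d}.

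\textbf{Step 4 (symmetry and normalization).} Symmetry costs nothing, since $d(x,y)+d(y,x)$ satisfies the same two-sided comparison and the same bounds. In fact $\psi_j$ above is already symmetric. The case $|\alpha|=0$, $|\beta|=1$ gives $\sup_{x\neq y}\sum_{|\beta|=1}|D_y^\beta d|\le C$. Dividing $d$ by $C$ gives the normalization and changes nothing else.

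The main obstacle is Step 1. It requires producing cutoffs whose $X$-derivatives obey exactly the scale-invariant bounds $2^{j|\alpha|}$ uniformly in the center. This is exactly where the uniform ball--box theorem and the uniform H\"ormander bounds for the rescaled fields are needed. Everything after it is bookkeeping on the dyadic sum.
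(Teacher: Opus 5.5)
Your proof is correct, but it does more than the paper. The paper gives no argument for the existence part: it imports $d\approx\rho$ and the derivative bounds directly from Nagel--Stein \cite{MR1882665}. The only reasoning it supplies is the pair of one-line reductions (replace $d$ by $d(x,y)+d(y,x)$, then divide by a constant), which you reproduce in Step~4. Your Steps~1--3 replace the citation with a self-contained construction that rests only on the uniform Nagel--Stein--Wainger scaling theorem. You use scaled bumps on a $2^{-j}$-net, then two-point cutoffs $\psi_j=\chi\bigl(\sum_i\eta_i^j(x)\eta_i^j(y)\bigr)$, which neatly avoids ever differentiating a bump with respect to its center. Finally you take the dyadic sum $\sum_j 2^{-j}(1-\psi_j)$. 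Two details are left implicit. First, you need $0\le\chi\le 1$ for the upper bound $d\lesssim\rho$ in Step~2. Second, each $\eta_i^j$ must be bounded below on $B_\rho(x_i^j,2\cdot 2^{-j})$; this follows from the inclusion $B_\rho(x,c\delta)\subseteq\Phi_{x,\delta}(\text{small Euclidean ball})$ in the scaling theorem. The citation buys brevity. Your construction buys transparency: $d$ is symmetric by construction, the result is visibly local to a compact neighborhood of $\bar\Omega$ (which is all the paper needs), and it makes explicit that $D_x^\alpha D_y^\beta$ must mean derivatives along $X_1,\dots,X_m$. That reading is forced. With Euclidean derivatives, the case $|\alpha|=0$, $|\beta|=1$ would give $\rho(x,y)\lesssim|x-y|$, which fails in the Heisenberg group. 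So the proposition does not by itself bound the Euclidean Hessian $D_x^2\{d^2(x,y)\}$, yet that bound is assumed in part~(1) of Proposition~\ref{approximating by convex solutions}.
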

Next, lemma gives an estimate for the smoothened metric $d$ when compared to the Euclidean metric locally in terms of $r$, from H\"ormander's bracker generating condition \eqref{Hormander condition}.
\begin{lemma}\cite[Proposition 1.1]{NSW}\label{Background: NSW lemma}
    Let $r$ be as in Section \ref{Background: Hormander vector fields}. Then, there exists constants $C_1, C_2$ such that for all $x\in \bar{\Omega}$ and $h,l\in \RR^m$ such that $\exp_{x}(h\cdot\XX), \exp_{x}(l\cdot \XX)\in \bar{\Omega}$, 
    \begin{align}\label{bound on cc metric after exp flow}
        C_1|h-l| \leq d(\exp_{x}(h\cdot \XX), \exp_{x}(l\cdot\XX))\leq C_2|h-l|^{1/r}.
    \end{align}
\end{lemma}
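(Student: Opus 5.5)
The plan is to compare both sides with the Euclidean distance in a coordinate patch containing $\bar{\Omega}$. Proposition \ref{Approximation viscosity solutions: smooth metric} gives $d\approx\rho$, so it suffices to prove the two inequalities with $\rho$ in place of $d$, at the cost of changing $C_1,C_2$. Write $\Phi_x(h):=\exp_x(h\cdot\XX)$, the time-one flow of the vector field $\sum_j h_jX_j$ started at $x$. The argument has three ingredients: Euclidean Lipschitz bounds for $\Phi_x$, a local comparison $\rho\gtrsim|\cdot|$, and the Nagel--Stein--Wainger ball-box estimate $\rho\lesssim|\cdot|^{1/r}$.

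\emph{Step 1 (Euclidean Lipschitz bounds for $\Phi_x$).} Set $\gamma_h(t)=\Phi_x(th)$, so that $\dot\gamma_h=\sigma(\gamma_h)h$. Subtracting the equations for $h$ and $l$ gives
\begin{align*}
\frac{d}{dt}(\gamma_h-\gamma_l)=\sigma(\gamma_h)(h-l)+\big(\sigma(\gamma_h)-\sigma(\gamma_l)\big)l .
\end{align*}
Since $\sigma$ is smooth on a neighbourhood of $\bar{\Omega}$ and $|l|$ is bounded when the endpoints lie in $\bar{\Omega}$, Grönwall's inequality yields
\begin{align*}
|\Phi_x(h)-\Phi_x(l)|\le C|h-l|,
\end{align*}
uniformly in $x\in\bar{\Omega}$. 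For the reverse inequality I would use $D_h\Phi_x(0)=\sigma(x)$ together with the inverse function theorem on a small ball of parameters. This gives $|\Phi_x(h)-\Phi_x(l)|\ge c|h-l|$ uniformly in $x$ by compactness, provided $\sigma(x)$ is injective, i.e. the $X_j(x)$ are linearly independent. If they are not, the lower bound can only hold for $h,l$ restricted to a complement of $\ker\sigma(x)$, and I would make that restriction explicit.

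\emph{Step 2 (lower bound).} Any subunit curve $\dot\gamma=\sigma(\gamma)a$ with $|a|\le1$ has Euclidean speed at most $\sup_{\bar\Omega}\|\sigma\|$. Hence $\rho(y,z)\ge c|y-z|$ for $y,z\in\bar{\Omega}$, with a separate argument for curves that exit the patch, which are long anyway. Combining this with the Step 1 lower bound and $d\approx\rho$ gives $d(\Phi_x(h),\Phi_x(l))\ge C_1|h-l|$.

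\emph{Step 3 (upper bound).} The key input is the ball-box theorem of \cite{NSW}: if the commutators of order at most $r$ span the tangent space on $\bar\Omega$, then the Euclidean ball $B(y,c\delta^r)$ is contained in $B_\rho(y,\delta)$ for small $\delta$. Equivalently, $\rho(y,z)\le C|y-z|^{1/r}$ for $y,z\in\bar{\Omega}$. Applying this with $y=\Phi_x(h)$, $z=\Phi_x(l)$ and using the Euclidean Lipschitz bound from Step 1 gives
\begin{align*}
d(\Phi_x(h),\Phi_x(l))\lesssim\rho(\Phi_x(h),\Phi_x(l))\lesssim|\Phi_x(h)-\Phi_x(l)|^{1/r}\lesssim|h-l|^{1/r}.
\end{align*}
For $|h-l|$ bounded below, the bound follows from the boundedness of $d$ on $\bar\Omega\times\bar\Omega$.

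The main obstacle is the ball-box inclusion in Step 3. It is the genuinely sub-Riemannian part of the argument: it requires building, near each point, a map $u\mapsto\exp(\sum u_IX_I)(y)$ along commutators $X_I$ of degree $d_I\le r$ that spans the tangent space, and controlling its Jacobian uniformly. Reaching a point at Euclidean distance $\epsilon$ then costs $\rho$-length at most $\sum_I|u_I|^{1/d_I}\lesssim\epsilon^{1/r}$. I would take this directly from \cite{NSW} and check only that its constants are uniform on the compact set $\bar{\Omega}$.
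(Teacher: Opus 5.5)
Your upper-bound argument is sound and rests on the same source as the paper, whose entire proof is the citation of \cite[Proposition 1.1]{NSW}. That proposition is the comparison $C_1|y-z|\le\rho(y,z)\le C_2|y-z|^{1/r}$ for $y,z$ in a compact set. It bounds $\rho$ by the Euclidean distance between the points, not by the parameter difference $|h-l|$, so your Step 1 supplies exactly the link the paper omits.

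One claim in Step 1 is false, however: endpoints in $\bar{\Omega}$ do not bound $|h|,|l|$, and without such a bound even the upper estimate fails. Take $\XX=\{-y\partial_x+x\partial_y,\ \partial_x,\ \partial_y\}$ on $\RR^2$, so $r=1$, and let $\Omega$ be the disc of radius $2$. The time-one flow with parameter $(2\pi,b)$, $b\in\RR^2$, fixes every point. But $\exp_0((2\pi+2\pi/|b|,\,b)\cdot\XX)$ lies at Euclidean distance $\approx 1$ from $0$. Hence $d\approx 1$ although $|h-l|=2\pi/|b|\to 0$. So you must impose $|h|,|l|\le\delta_0$ explicitly. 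This restriction is also what makes the flows exist, keeps them in the coordinate patch, and gives a uniform Gr\"onwall constant. It is all that is used later: only the upper bound, with small parameters, enters Case 2 of Proposition \ref{Proposition 3.5 of MM}.

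You are right that the lower bound is problematic: as stated it is false for general H\"ormander systems, and the citation does not give it. For the Grushin fields $X_1=\partial_{x_1}$, $X_2=x_1\partial_{x_2}$ (so $r=2$) one computes $\exp_x(h\cdot\XX)=(x_1+h_1,\ x_2+h_2(x_1+h_1/2))$. At $x_1=0$ the parameters $h=(0,a)$ and $l=0$ give the same point, so $d=0<C_1|a|$.

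Your fallback of restricting $h,l$ to a complement of $\ker\sigma(x)$ does not recover a uniform constant. At $x=(\epsilon,x_2)$ the kernel is trivial, yet $h=(0,a)$, $l=0$ with $\epsilon\ll a$ give $\rho\lesssim(\epsilon a)^{1/2}$: move out to $x_1\approx(\epsilon a)^{1/2}$, flow along $X_2$, then move back. So $d/|h-l|\lesssim(\epsilon/a)^{1/2}\to 0$.

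Your inverse-function and compactness argument is correct exactly when $\sigma(x)$ is injective at \emph{every} point of $\bar{\Omega}$, together with $|h|,|l|\le\delta_0$. That hypothesis forces $m\le n$ and gives a positive lower bound on the smallest singular value of $\sigma$. This is the right form of the lower bound; without that hypothesis the lower bound should be dropped.
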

Now, we will state Rademacher's theorem for a general Carnot-Carathéodory geometry due to Cheeger \cite{CJ}(this is the variant we use for our result instead of the result of Pansu \cite{PP} Carnot groups, which is used in \cite{MM}). 
\begin{lemma}\cite{CJ}\label{Cheeger's result on a.e differentiability}
    Let $f:U\subset M\to \RR$ is a Lipschitz function with respect to distance function $\rho$, i.e $|f(x)-f(y)|\leq c d(x,y)$, then $f$ is differentiable Lebesgue $a.e.$ $x\in U$. 
\end{lemma}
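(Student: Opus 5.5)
We will read the statement in the natural Carnot--Carath\'eodory sense. Differentiability at $x$ means that the horizontal gradient $\XX f(x)$ exists and that
\[
f(\exp_x(h\cdot\XX))=f(x)+h\cdot\XX f(x)+o(|h|)\quad\text{as } h\to 0 .
\]
We will prove this in two stages. First, the horizontal derivatives $X_jf$ exist a.e. and are bounded. Second, we upgrade this to the first-order expansion along all horizontal exponential directions at once. We may work locally in a coordinate patch. Since $d\approx\rho$ by Proposition \ref{Approximation viscosity solutions: smooth metric}, we may also use $\rho$ in place of $d$.

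\textbf{Step 1 (derivatives along single fields).} Fix $j$. The integral curve $t\mapsto \exp_x(tX_j)$ is horizontal with speed at most $1$ in the sub-Riemannian sense, so $\rho(\exp_x(tX_j),x)\le |t|$. Hence $t\mapsto f(\exp_x(tX_j))$ is $c$-Lipschitz. Near a point where $X_j\neq 0$, we pass to flow-box coordinates in which $X_j=\partial_{y_1}$. Then $f$ is Lipschitz along every line in the $y_1$-direction, and the one-dimensional Rademacher theorem together with Fubini gives existence of $X_jf$ a.e., with $|X_jf|\le c$. On the set where $X_j=0$ the derivative is trivially $0$. The same argument, applied to $h\cdot\XX$ for each $h$ in a countable dense set $D\subset S^{m-1}$, shows that the derivative along $h\cdot\XX$ exists a.e. In the distributional sense, $X_jf$ is the weak derivative $-X_j^*$-dual of $f$, so it lies in $L^\infty$. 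Integrating by parts against test functions then shows that the derivative along $h\cdot\XX$ equals $\sum_j h_jX_jf$ a.e. This linearity in $h$ is exactly as in Stein's proof of the Euclidean Rademacher theorem. Discarding a null set, we obtain a full-measure set $E$ on which all of these derivatives exist and are linear in $h\in D$.

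\textbf{Step 2 (uniformity in the direction).} Fix $x\in E$, $\varepsilon>0$, and a finite $\varepsilon$-net $D_\varepsilon\subset D$. For $|h|=t$, choose $h'\in D_\varepsilon$ with $|h/t-h'|<\varepsilon$. We then need the bound
\[
|f(\exp_x(h\cdot\XX))-f(\exp_x(th'\cdot\XX))|\le c\,\rho\bigl(\exp_x(h\cdot\XX),\exp_x(th'\cdot\XX)\bigr)\lesssim \varepsilon t .
\]
\textbf{This is the main obstacle.} Lemma \ref{Background: NSW lemma} only gives the bound $|h-h'|^{1/r}$, which is far too weak. I would first try a Gr\"onwall-type comparison of the two flows. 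The curve $s\mapsto \exp_x(s h\cdot\XX)$ and its perturbation differ by the control $h-th'$. Freezing the endpoint and flowing back along $-th'\cdot\XX$ should give a horizontal path between the two points whose length is of order $t|h/t-h'|$ plus an error of order $O(t^2)$, with constants controlled by Gr\"onwall. That is enough for the $o(t)$ expansion.

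If this direct estimate fails, the fallback is to invoke Cheeger's general theorem, which is what \cite{CJ} actually provides. Lebesgue measure is doubling for $\rho$ locally, and Jerison's Poincar\'e inequality holds for H\"ormander vector fields. Cheeger's theorem therefore yields a measurable differentiable structure in which Lipschitz functions are differentiable a.e. It would then remain to identify that differential with $\XX f$, using Step 1.
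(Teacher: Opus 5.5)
\textbf{Step 1 is sound; the gap is in Step 2.} Step 1 is Stein's argument: one-dimensional Rademacher in flow-box coordinates, then linearity in the direction via the distributional derivative. In Step 2, the bound you say you need, $\rho(\exp_x(h\cdot\XX),\exp_x(th'\cdot\XX))\lesssim\varepsilon t$, is false, and so is the Gr\"onwall heuristic behind it.

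Take the Heisenberg group, with $X_1=\partial_{x_1}-\tfrac{x_2}{2}\partial_{x_3}$ and $X_2=\partial_{x_2}+\tfrac{x_1}{2}\partial_{x_3}$. There $\exp_x(h\cdot\XX)=x\cdot(h,0)$, so left invariance gives
\[
\rho\bigl(\exp_x(h\cdot\XX),\exp_x(k\cdot\XX)\bigr)\approx|h-k|+|h_1k_2-h_2k_1|^{1/2}.
\]
For $|h|=t$, $k=th'$ and an angle $\approx\varepsilon$ between $h$ and $h'$, this is $\approx\sqrt{\varepsilon}\,t$, not $\lesssim\varepsilon t$.

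Gr\"onwall only controls the coordinate mismatch of the endpoints. Flowing back along $-th'\cdot\XX$ lands at a point that differs from $x$ by $\approx\varepsilon t$ horizontally and by $\approx\varepsilon t^2$ in the bracket direction. That bracket component can only be closed horizontally at cost $(\varepsilon t^2)^{1/2}$. An $O(t^2)$ error without the factor $\varepsilon$ would even cost $O(t^{2/r})$, which is not $o(t)$.

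What rescues your argument is a scale-sensitive form of Lemma \ref{Background: NSW lemma}. Take $|h|,|k|\le t$, put $y=\exp_x(h\cdot\XX)$, and write $\exp_x(k\cdot\XX)=\exp(k\cdot\XX)\circ\exp(-h\cdot\XX)(y)$.
\begin{itemize}
\item Truncated BCH replaces this composition by $\exp(\sum_{j\le r}Z_j)(y)$. The coordinate error is $O(t^{r+1})$, hence the $\rho$-error is $O(t^{1+1/r})$.
\item Each $Z_j$ is a combination of degree-$j$ commutators. Its coefficients are degree-$j$ polynomials in $(h,k)$ that vanish on $\{h=k\}$, so they are $O(|h-k|\,t^{j-1})$.
\item The Nagel--Stein--Wainger comparison of $\rho$ with the exponential-of-commutators metric then gives $\rho\lesssim\max_{j\le r}(|h-k|\,t^{j-1})^{1/j}+t^{1+1/r}$. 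When $|h-k|\le\varepsilon t$ this is $\lesssim\varepsilon^{1/r}t+t^{1+1/r}$.
\end{itemize}
Your three-term splitting then yields $\limsup_{t\to0}t^{-1}\sup_{|h|=t}|f(\exp_x(h\cdot\XX))-f(x)-h\cdot\XX f(x)|\lesssim\varepsilon^{1/r}+\varepsilon|\XX f(x)|$. Letting $\varepsilon\to0$ finishes the proof.

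\textbf{The fallback is the paper's route, but your identification step is incomplete.} The paper gives no proof and simply cites Cheeger. Its hypotheses here are local doubling (Nagel--Stein--Wainger) and Jerison's Poincar\'e inequality, as you say. However, the identification is not just ``use Step 1''. Cheeger's theorem gives $f(y)=f(x)+\sum_i a_i(x)(\phi_i(y)-\phi_i(x))+o(\rho(x,y))$, where the chart functions $\phi_i$ are merely Lipschitz. Setting $y=\exp_x(h\cdot\XX)$ therefore reduces your horizontal expansion for $f$ to the same expansion for the $\phi_i$, which is the statement being proved.

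This closes only if you first show that smooth functions (say, suitable coordinate functions) can serve as a Cheeger chart. That amounts to identifying Cheeger's cotangent bundle with the horizontal one; once it is done, Step 1 does identify $\sum_i a_i\XX\phi_i$ with $\XX f$. If the lemma is read in Cheeger's sense, the citation alone suffices. For the horizontal notion of differentiability you adopted, the complete proof is the direct route with the corrected Step 2 estimate.
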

\section{Approximating viscosity solutions}\label{Approximating viscosity solutions}

\numberwithin{equation}{section}
In this section we will see how to approximate any viscosity subsolution (supersolution) with semiconvex (semiconcave) subsolutions (supersolutions) for a perturbed operator. We use ideas similar to \cites{JLS, WC}. 

Let $\Omega\subset M$ be a bounded domain and let $d$ be as in Proposition \ref{Approximation viscosity solutions: smooth metric}. For any $\epsilon>0$, define
\begin{align*}
    \Omega_{\epsilon}= \{ x\in \Omega| \underset{y\in M\backslash \Omega}{\inf} d^2(x,y) \geq \epsilon\}
\end{align*}
\begin{definition}
    For any $u\in C(\bar{\Omega})$ and $\epsilon>0$, the $\sup$ convolution $u^{\epsilon}$ of $u$ is defined by 
    \begin{align}
        u^{\epsilon}(x)=\underset{y\in \bar{\Omega}}{\sup}\ \{ u(y)- \frac{1}{2\epsilon} d^2(x,y)\}, \quad \forall x\in \Omega.
    \end{align}
    Similarly, the $\inf$ convolution $u_{\epsilon}$ of $u\in C(\bar{\Omega})$ is defined by 
  \begin{align}
        u_{\epsilon}(x)=\underset{y\in \bar{\Omega}}{\inf}\ \{ u(y)- \frac{1}{2\epsilon} d^2(x,y)\}, \quad \forall x\in \Omega.
    \end{align}
\end{definition}

\begin{prop}\label{approximating by convex solutions}
    For any $u, v\in C(\bar{\Omega})$, denote $R_0=2\max\{\|u\|_{L^{\infty}(\Omega)}, \|v\|_{L^{\infty}(\Omega)}\}$. Then, for $\epsilon>0$ small enough we have
    \begin{enumerate}
        \item $u^{\epsilon}$ is semiconvex in $U$ and $v_{\epsilon}$ is semiconcave in $U$.
        \item $u^{\epsilon}$ (or $v_{\epsilon}$, resp) is monotonically non-decreasing (or non-increasing, resp.) w.r.t $\epsilon$, and converges unifomly to u in $\Omega_{(1+4R_0)\epsilon}$.
        \item\label{point 3 approximating by viscosity solution} Let $\{x_1, \cdots ,x_k\}$ be a set of points in $\Omega_{(1+4R)\epsilon}$ . Then there exists for $h\in \RR^m$ small enough such that there is a local perturbation of $\mathcal{L}$ near $\{x_1, \cdots, x_k\}$ given by $\mathcal{L}^\epsilon$ such that $\mathcal{L}^\epsilon$ satisfies the same conditions \eqref{introduction: equation 1.6 of MM} and \eqref{introduction: equation 1.7 of MM} imposed on $\mathcal{L}$ and $\mathcal{L}^\epsilon\to \mathcal{L}$ locally uniformly when tested on $C^2$ functions. If $\mathcal{L}u\leq \mathcal{L}v$ in $U$ at $\{x_1, \cdots, x_n\}$ in the viscosity sense then we have $\mathcal{L}^{\epsilon}u^{\epsilon}\leq 0\leq\mathcal{L}^{\epsilon}v_{\epsilon}$ in viscosity sense at $\{\exp_{x_1}(h\cdot \XX), \cdots, \exp_{x_k}(h\cdot \XX)\}$.
    \end{enumerate}
\end{prop}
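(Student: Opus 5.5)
We treat the three items separately, working throughout in a coordinate patch containing $\bar\Omega$, where $d$ is the smoothed Carnot--Carath\'eodory distance of Proposition \ref{Approximation viscosity solutions: smooth metric}.

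\textbf{Semiconvexity.} For each fixed $y$, the function $x\mapsto u(y)-\frac{1}{2\epsilon}d^2(x,y)$ has Hessian bounded by a constant $C/\epsilon$ that does not depend on $y$. This follows from \eqref{approximating viscosity solutions: derivative estimate for d}: since $|D_x d|\lesssim 1$ and $|D_x^2 d|\lesssim d^{-1}$,
\begin{align*}
|D_x^2 d^2| \le 2|D_xd|^2+2d\,|D_x^2d|\lesssim 1 ,
\end{align*}
and this bound is uniform away from the diagonal. Near the diagonal the same estimate holds because $d^2$ is $C^{1,1}$ there. Hence
\begin{align*}
u(y)-\frac{1}{2\epsilon}d^2(x,y)+\frac{C}{2\epsilon}|x|^2
\end{align*}
is convex in $x$ for every $y$. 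A supremum of convex functions is convex, so $u^\epsilon$ is semiconvex. The same argument gives semiconcavity of $v_\epsilon$.

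\textbf{Monotonicity and uniform convergence.} Monotonicity in $\epsilon$ is immediate from the formula, and taking $y=x$ gives $u^\epsilon\ge u$. If $y_\epsilon$ attains the supremum, then $u(y_\epsilon)-\frac{1}{2\epsilon}d^2(x,y_\epsilon)\ge u(x)$, so $d^2(x,y_\epsilon)\le 2\epsilon\cdot 2\|u\|_\infty\le 2R_0\epsilon$. For $x\in\Omega_{(1+4R_0)\epsilon}$ this forces $y_\epsilon$ into the interior of $\Omega$. It then follows that
\begin{align*}
0\le u^\epsilon(x)-u(x)\le u(y_\epsilon)-u(x)\le \omega_u\bigl(C\sqrt{R_0\epsilon}\bigr),
\end{align*}
where $\omega_u$ is the modulus of continuity of $u$ on $\bar\Omega$ (which exists since $\bar\Omega$ is compact and $d\approx\rho$ is comparable to a H\"older power of the Euclidean distance). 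This gives uniform convergence.

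\textbf{Item (3): plan.} Take a test function $\varphi\in\mathcal A^+_{z_i}(u^\epsilon,\Omega)$ at a point $z_i$ near $x_i$, and let $y_i$ be a maximizer in the definition of $u^\epsilon(z_i)$. The pair $(z_i,y_i)$ maximizes
\begin{align*}
u(y)-\frac{1}{2\epsilon}d^2(z,y)-\varphi(z).
\end{align*}
Freezing $z=z_i$, the function
\begin{align*}
\psi(y):=\varphi(z_i)+\frac{1}{2\epsilon}\bigl(d^2(z_i,y)-d^2(z_i,y_i)\bigr)+\text{const}
\end{align*}
touches $u$ from above at $y_i$. Here $y_i$ is an interior point by the previous step, and $\psi$ is $C^2$ when $y_i\ne z_i$; the degenerate case $y_i=z_i$ needs a separate argument. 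Applying the viscosity inequality for $u$ at $y_i$ gives $\mathcal L\psi(y_i)\le0$.

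The key point is that there is no translation invariance. Since $z$ also varies, we instead compare the jets of $\psi$ at $y_i$ with those of $\varphi$ at $z_i$. The first-order condition in $z$ gives
\begin{align*}
\nabla\varphi(z_i)=-\frac{1}{2\epsilon}\nabla_z d^2(z_i,y_i),
\end{align*}
and the second-order condition, taken in the doubled variable $(z,y)$, yields a matrix inequality relating $D^2\varphi(z_i)$ to $D^2_y\psi(y_i)$ up to cross terms.

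Write $y_i=\exp_{z_i}(h_i\cdot\XX)$. The fact that $d^2(z_i,y_i)=O(\epsilon)$ together with Lemma \ref{Background: NSW lemma} shows that $h_i$ is small. We then define the perturbed operator $\mathcal L^\epsilon$ near $z_i$ by conjugating $\mathcal L$ with the local diffeomorphism $x\mapsto\exp_x(h_i\cdot\XX)$. This replaces $\sigma$ by the pushed-forward coefficients, so that $\psi$'s jet at $y_i$ becomes $\varphi$'s jet at $z_i$ up to the matrix inequality above. With this definition, $\mathcal L^\epsilon\varphi(z_i)\le0$ follows from the degenerate ellipticity \eqref{eq 2.2 of MM}. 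Because the map is a diffeomorphism close to the identity (as $h_i\to0$), we get $\mathcal L^\epsilon\to\mathcal L$ on $C^2$ functions. Conditions \eqref{introduction: equation 1.6 of MM} and \eqref{introduction: equation 1.7 of MM} are preserved, since only the $x$-dependent coefficients $\sigma$ and $M$ change, while $A$ and $H$ do not. The finitely many points $x_1,\dots,x_k$ are handled with a partition of unity, and a single common $h$ is obtained by choosing $\epsilon$ small.

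\textbf{Main obstacle.} The hardest step is matching a single shift $h$ across all the points and controlling the cross second-derivative terms of $d^2$. The bound $|D^2d^2|\lesssim1$ is only uniform, not sharp, so the error must be absorbed into the perturbation $\mathcal L^\epsilon$ rather than into $\mathcal L$ itself. Our first attempt would be to let $\mathcal L^\epsilon$ absorb these errors as an $O(\epsilon)$-small coefficient change localized near the selected points.
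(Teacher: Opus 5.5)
Your items (1) and (2) follow the paper. Item (1) is the same Leibniz bound on $D_x^2 d^2$ followed by ``a supremum of convex functions is convex''. Item (2) bounds $d^2(x,y_\epsilon)\le 2R_0\epsilon$ and then uses the modulus of continuity of $u$. That is more direct than the paper's argument through $u^{\epsilon}-u^{\epsilon/2}$, and it gives the uniform rate explicitly.

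One caveat in (1), which you share with the paper: the claim that $d^2(\cdot,y)$ is $C^{1,1}$ at the diagonal reads Proposition \ref{Approximation viscosity solutions: smooth metric} as a bound on coordinate derivatives. That cannot hold when the $X_j$ fail to span at $y$. A nonnegative $C^{1,1}$ function vanishing at $y$ is $O(|x-y|^2)$, whereas $d^2\approx\rho^2\gtrsim|x-y|$ in directions outside the span of $X_1(y),\dots,X_m(y)$ (e.g.\ $d^2\approx|t|$ along the centre of the Heisenberg group).

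The decisive gap is in item (3). Your architecture matches the paper's: take a maximiser $y_i$, write it as a flow from $z_i$, conjugate $\mathcal{L}$ by $\Theta_i(z)=\exp_z(h_i\cdot\XX)$, and glue with cutoffs. What is missing is the step that makes this work. Since $(z,\Theta_i(z))$ is admissible in the sup-convolution, what you actually obtain is that $u\circ\Theta_i-\varphi-g_i$ has a local maximum at $z_i$, where
\[
g_i(z):=\frac{1}{2\epsilon}\,d^2\bigl(z,\Theta_i(z)\bigr).
\]
So the subsolution property of $u$ at $y_i$ yields $(\Theta_i^*\mathcal{L})(\varphi+g_i)(z_i)\le 0$, not $(\Theta_i^*\mathcal{L})\varphi(z_i)\le 0$. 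Conjugation handles $\varphi$ exactly, and everything hinges on $g_i$.

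With left-invariant fields and distance on a Carnot group, $g_i$ is constant; this is exactly the left invariance that Wang and Manfredi--Mukherjee exploit. In general it is not constant, and its derivatives carry a factor $\epsilon^{-1}$:
\[
\nabla g_i(z_i)=\frac{1}{2\epsilon}\bigl(\nabla_1 d^2+D\Theta_i(z_i)^{T}\nabla_2 d^2\bigr)(z_i,y_i)
\]
has no reason to be small, and $|D^2d^2|\lesssim1$ only gives $|D^2g_i|\lesssim\epsilon^{-1}$. Nothing makes this error $O(\epsilon)$. Even making it $o(1)$ would need an approximate invariance of $d$ under $z\mapsto\exp_z(h\cdot\XX)$ at scale $\sqrt{\epsilon}$, which neither Proposition \ref{Approximation viscosity solutions: smooth metric} nor Lemma \ref{Background: NSW lemma} supplies.

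Nor can the error be absorbed into $\mathcal{L}^\epsilon$:
\begin{itemize}
\item Feeding the horizontal gradient $\xi_i$ of $g_i$ into the operator turns $A(\xi),H(\xi)$ into $A(\xi+\xi_i),H(\xi+\xi_i)$. These need not satisfy \eqref{introduction: equation 1.6 of MM}--\eqref{introduction: equation 1.7 of MM}; for $A(\xi)=\xi\otimes\xi$, $\langle A(\xi+\xi_i)\xi,\xi\rangle=0$ at $\xi=-\xi_i\neq0$.
\item A second-order term known only to be $O(\epsilon^{-1})$ gives no control on $\mathcal{L}^\epsilon-\mathcal{L}$.
\item \eqref{eq 2.2 of MM} lets you discard that term only if it is $\le 0$, which nothing guarantees.
\end{itemize}
The paper does not supply this step either. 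Its passage to ``$\tilde\phi$ touches $u$ at $t_0$'' drops both $d^2$ terms, in effect assuming $d(x_0,\exp_{x_0}(t_0\cdot\XX))=d(\exp_{x_0}((t-t_0)\cdot\XX),\exp_{x_0}(t\cdot\XX))$.

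Three further steps fail as written.
\begin{itemize}
\item The representation $y_i=\exp_{z_i}(h_i\cdot\XX)$ with $h_i\in\RR^m$ is unavailable when $m<n$. The map $h\mapsto\exp_z(h\cdot\XX)$ has rank at most $m$, so its image near $z$ has measure zero.
\item A single common $h$ does not come from taking $\epsilon$ small. The $h_i$ are dictated by the maximisers; they are small, not equal.
\item You invoke the subsolution property of $u$ at the $y_i$. The statement assumes it only at $x_1,\dots,x_k$ and concludes at $\exp_{x_j}(h\cdot\XX)$, which would require $x_j$ to be the maximiser there.
\end{itemize}

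If you want a variant of (3) that closes, sup-convolve with the Euclidean distance of the coordinate patch. Then $y_i=z_i+h_i$ with $|h_i|\le\sqrt{2R_0\epsilon}$, and $\varphi(\cdot-h_i)$ touches $u$ at $y_i$ exactly. Near $z_i$ the perturbed operator is $\mathcal{L}$ with $\sigma$ and $M$ evaluated at $x+h_i$. Since $A$ and $H$ are untouched, \eqref{introduction: equation 1.6 of MM} and \eqref{introduction: equation 1.7 of MM} hold and $\mathcal{L}^\epsilon\to\mathcal{L}$, and semiconvexity becomes the classical Euclidean one.
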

\begin{proof}
Since the proof for $v_{\epsilon}$ is parallel to that for $u^{\epsilon}$, we only consider $u^{\epsilon}$. In this proof, the distance $d$ will be the smoothened symmetrized metric from Proposition \ref{Approximation viscosity solutions: smooth metric}.

  (1) By Leibniz rule we see that 
    \begin{equation*}
       \| D^2_{x}\{d^2(x,y)\}\|_{L^{\infty}(\bar{\Omega}\times \bar{\Omega})} \lesssim \|D_{x}\{d(x,y)\}\|_{L^{\infty}(\bar{\Omega}\times \bar{\Omega})}^2+\|d(x,y) D_x^2\{d(x,y)\}\|_{L^{\infty}(\bar{\Omega}\times \bar{\Omega})}
    \end{equation*}
    Now, since the diagonal of $\bar{\Omega}\times \bar{\Omega}$ has zero measure, and applying \eqref{approximating viscosity solutions: derivative estimate for d} the above can be bounded by
    \begin{align*}
        \lesssim \| d(x,y) ^{1-1}\|_{L^{\infty}(\bar{\Omega}\times \bar{\Omega})} + \|d(x,y)^{1+1-2}\|_{L^{\infty}(\bar{\Omega}\times \bar{\Omega})} <\infty,    \end{align*}
        where we also used the fact that $\bar{\Omega}\times \bar{\Omega}$ is bounded. Hence we define 
        \begin{equation*}
            C_{d}(\Omega)=\| D^2_{x}\{d^2(x,y)\}\|_{L^{\infty}(\bar{\Omega}\times \bar{\Omega})}.
        \end{equation*}
        Therefore, for any $y\in \bar{\Omega}$, the function 
        \begin{equation*}
            \bar{u}^{\epsilon}_{y}:= u(y) -\frac{1}{2\epsilon} d(x,y)^2 + \frac{C_{d}(\Omega)}{2\epsilon} |x|^2, \ \forall x\in \Omega,
        \end{equation*} 
        has nonnegative hessian and is convex. Since the supremum of a family of convex functions is convex, we have
        \begin{equation*}
            u_{\epsilon} (x) +\frac{C_d(\Omega)}{2\epsilon} |x|^2 = \underset{y\in \bar{\Omega}}{\sup}\ 
 \bar{u}^{\epsilon}_y (x)        \end{equation*}
 is convex so that $u_{\epsilon}$ is semi-convex. 

(2) It is easy to see that for any $\epsilon_1<\epsilon_2$, $u^{\epsilon_1}(x) \leq u^{\epsilon_2}(x)$ and $u(x)\leq u^{\epsilon}(x)\leq R_0$ for any $\epsilon>0$ and for any $x\in \Omega$, Observe that for any $x\in \Omega$ we have 
 \begin{equation*}
     u^{\epsilon}(x):= \underset{\bar{\Omega} \cap \{d(x,y)^2\leq 4R_0\epsilon\}}{\sup} (u(y)- \frac{1}{2\epsilon} d(x,y)^2).
 \end{equation*}
 Therefore, for any $x\in \Omega_{(1+4R_0)\epsilon}$, $u^{\epsilon}(x)$ is attained at a point $y\in \Omega$. To see $u^{\epsilon}\to u$ uniformly on $\Omega_{(1+4R_0)\epsilon}$, we observe that if $u^{\epsilon}(x)$ is attained by $x_\epsilon$, then 
 \begin{equation*}
     u^{\frac{\epsilon}{2}}(x) \leq u^{\epsilon}(x)= u(x_{\epsilon}) -\frac{1}{\epsilon} d(x,x_{\epsilon})^2 = u^{\epsilon}(x)- \frac{1}{2\epsilon} d(x, x_{\epsilon})^2, 
 \end{equation*}
 and hence 
 \begin{align*}
     \frac{1}{\epsilon}d(x, x_{\epsilon})^2\leq 2(u^{\epsilon}(x)-u^{\epsilon/2}(x)).
 \end{align*}
Since $u^{\epsilon}(x)-u^{\epsilon/2}(x)\to 0$ as $\epsilon \to 0$ we get 
\begin{equation*}
    \underset{\epsilon\to 0}{\lim}\ \frac{1}{\epsilon} d(x,x_{\epsilon})^2=0,
\end{equation*}
so that $x_{\epsilon}\to x$ and $\underset{\epsilon \to 0}{\lim}\ u^{\epsilon}(x)= u(x)$. Since 
\begin{equation*}
    |u^{\epsilon}(x_1)-u^{\epsilon}(x_2)| \leq |u(x_1)-u(x_2)|, \ \forall x_1, x_2 \in \Omega,
\end{equation*}

(3) Let $x_0\in \{x_1, \cdots, x_k\}$ represent a generic point. Let $\phi \in C^{2}(\Omega_{(1+4R_0)\epsilon})$ be such that 

\begin{equation*}
    u^{\epsilon}(x_0)-\phi(x_0)\geq u^{\epsilon}(x)-\phi(x), \ \forall x\in \Omega_{(1+4R_0)\epsilon}.
\end{equation*}
Here we will rewrite the partial differential operator $\mathcal{L}$ locally using exponential coordinates. $u^{\epsilon}\to u$ uniformly on $\Omega_{(1+4R_0)\epsilon}$. Similar to the argument in (2) we see that there exists $y_0\in \Omega$ such that 
\begin{equation*}
    u^{\epsilon}(x_0)= u(y_0)-\frac{1}{2\epsilon} d(x_0, y_0)^2.
\end{equation*}
Therefore we have
\begin{align*}
   & u(\exp_{x_0}(t_0\XX))-\frac{1}{2\epsilon} d(x_0,\exp_{x_0}(t_0\XX))^2 - \phi(x_0)\\ &\geq u(\exp_{x_0}(t\XX))-\frac{1}{2\epsilon}d(\exp_{x_0}(s\XX),\exp_{x_0}(t\XX))-\phi(\exp_{x_0}(s\XX)), \ \forall s, t\in U.
\end{align*}
Now, for $t$ near $t_0$, say $|t-t_0|<\epsilon/2$ we choose $s= t-t_0\in U$ and hence the above inequality becomes
\begin{align*}
    u(\exp_{x_0}(t_0\XX))- \phi(\exp_{x_0}((t-t_0)\XX))\geq u(\exp_{x_0}(t\XX))-\phi(\exp_{x_0}((t-t_0)\XX)).
\end{align*}
Let $\tilde{\phi}(\exp_{x_0}(t\XX)):= \phi(\exp_{x_0}((t-t_0)\XX))$ for $t$ near $t_0$. Then, $\tilde{\phi}$ touches $u$ from above at $t=t_0$ so that we have 
\begin{equation*}
    \mathcal{L} (\tilde{\phi})(t_0)\leq 0. 
\end{equation*}
Unlike the proof in \cite[Proposition 3.3]{WC}, we do not have any left invariance here. Hence, we won't be able to prove that $u^{\epsilon}$ is a viscosity subsolution at $\exp_{x_0}(t_0\cdot \XX)$. Hence, we will construct a perturbation of the operator $\mathcal{L}$ for which $u^{\epsilon}$ is a viscosity subsolution at $t-t_0$.
To do this, we will first pullback the operator $\mathcal{L}$ in a neighborhood of $x_0$. \\

We are ready to define perturbations of $\mathcal{L}$ which will be a major theme in this article. The idea is to locally pull back the vector fields near the given points $\{x_1, \cdots, x_n\}$ by the exponential map, then perturb it and then push it forward. Then, we patch together these operators using bump functions.

We have distinct points $\{x_1, \cdots, x_k\}$. For each $j$, choose open set 
\begin{align*}
    B_j^{\epsilon'}\Subset B_j^{\epsilon}\Subset \Omega_{(1+4R)\epsilon}, \quad B_i^\epsilon\cap B_j^\epsilon=\emptyset (i\neq j)
\end{align*}
and a cutoff $\eta_{j}\in C_{c}^{\infty}(B_j^\epsilon)$ with $\eta_{j}\equiv 1$ on $B_{j}^{\epsilon'}$ and we assume that $B_j^{\epsilon}\to \{x_j\}$ uniformly as $\epsilon \to 0$.  \\

Let $\Phi_j(t)=\exp_{x_j}(t\cdot \XX)$ for $|t|$ sufficiently small(i.e, assume $\exp_{x_j}(t\cdot \XX)\subset B_j^\epsilon$) and set
\begin{align}\label{pullback of vector  field}
    \hat{u}_j:=u\circ \Phi_j, \ \hat{X}_i^j:= \Phi_j^* X_i= \left(d\Phi_j^{-1}\right)X_i\circ \Phi_j.
\end{align}
Then for every smooth function $f$ we have 
\begin{align*}
    \hat{X}^j_i(f\circ \Phi_j)= (X_if) \circ \Phi_j, \quad \hat{X}^j_i\hat{X}^j_k(f)=(X_iX_kf)\circ \Phi_j, \quad  i,k \in \{1, \cdots, m\}.
\end{align*}
Therefore the horizontal gradient and the Hessian pull back becomes
\begin{align*}
    \hat{\XX}^ju:= (\XX u)\circ \Phi_j, \quad \hat{\XX}^{j*}\hat{\XX}^j\hat{u}:=(\XX^*\XX u)\circ \Phi_j .
\end{align*}
Hence $\mathcal{L}$ under the pull back becomes 
\begin{align}
   \hat{\mathcal{L}}^j\hat{u}^j(t)= -\Tr\left(A(\hat{\XX}^j\hat{u}^j(t))\hat{\XX}^{j*}\hat{\XX}^j\hat{u}^j(t)\right)+ H(\hat{\XX}^j\hat{u}^j(t)).
\end{align}
Since $\hat{\mathcal{L}}^j$ takes the same form as $\mathcal{L}$ we have preserved the assumption on $\mathcal{L}$ including \eqref{introduction: equation 1.6 of MM} and \eqref{introduction: equation 1.7 of MM}.

Let $h\in \RR^m$ be a small enough and then denote
\begin{align*}
    \Psi_{j,h}:=\exp_{x_j}(h\cdot \XX) \quad \text{on}\quad B_j.
\end{align*}
Here without loss of generality we assume that $\Psi_{j,h}$ are diffeomorphisms onto its image, else we could shrink $B_j$'s. 
Now, define the following conjugated map
\begin{align}
    \Theta_{j,h}:= \Phi_j^{-1}\circ \Psi_{j,h}\circ \Phi_j, \quad \Theta_{j,h}(0)=h.
\end{align}
Now, observe that 
\begin{align*}
    \Phi_j^*(u\circ \exp_{x_j}(h\cdot \XX))(t)= (u\circ \Psi_{j,h}\circ \Phi_j)(t)= \hat{u}(\Theta_{j,h}(t)).
\end{align*}
Then, we define the perturbed vector fields $\hat{X}_i^{j,h}$ as the pull back of $X_i$ along $\Theta_{j,h}$. i.e, 
\begin{align}
    \hat{X}_i^{j,h}:= (\Theta_{j,h})^* \hat{X}_i^j(t)= (d\Theta_{j,h})_t^{-1}\hat{X}_i^j(\Theta_{j,h}(t)), \quad \hat{\XX}^{j,h}=(\hat{X}_1^{j,h}, \cdots, \hat{X}_m^{j,h}), 
\end{align}
where $(d\Theta_h)_t$ is the derivative of $\Theta_h$ at $t$. Then, set
\begin{align}\label{perturbed operator}
    \hat{\mathcal{L}}^{j,h} w(t):=-\Tr\left(A(\hat{\XX}^{j,h}w(t))\hat{\XX}^{j,h*}\hat{\XX}^{j,h}w(t)\right)+ H(\hat{\XX}^{j,h}w(t)).
 \end{align}
 \eqref{perturbed operator} is the original operator $\hat{\mathcal{L}}^j$ translated by $\Theta_{j,h}$. Again, no extra terms appear in the perturbed operator since we express every perturbation in terms of the vector fields $X_i$. 

 Now, we will see that $\hat{u}_h$ is a viscosity subsolution to $\hat{\mathcal{L}}_h^j$ at $t=h$ given $\hat{u}^j$ is a viscosity subsolution to $\hat{\mathcal{L}}^j$ at $t=0$. 

 Let $\psi^j\in C^2$ touch $\hat{u}_h^j$ from above at $t=h$. Define $\hat{\psi}^j(s):=\psi(\Theta_{j,h}(s))$. Then, $s\mapsto \hat{u}^j(s)-\hat{\psi}^j(s)$ has a local max at $t=0$. Since, $\hat{u}^j$ is a subsolution to $\hat{\mathcal{L}}^j$ at $t=0$, 
 \begin{align*}
     \hat{\mathcal{L}}^j\hat{\psi}^j(0)\leq 0.
 \end{align*}
 By construction of $\hat{\XX}^{j,h}$, 
 \begin{align*}
     \hat{\mathcal{L}}^j\hat{\psi}^j(0)=\hat{\mathcal{L}}^{j,h}\psi^j(h), 
 \end{align*}
because $\hat{X}_i\hat{\psi}^j(0)=\hat{X}_i^h\psi^j(h)$ and likewise for second order terms. Hence $\hat{\mathcal{L}}^{j,h}\psi^j(h)\leq 0$, i.e, $\hat{u}_h^j$ is a viscosity subsolution at $h$.

So, to finish off the proof we will pick $h:=s=t-t_0$. Then let 
\begin{align*}
    \eta_{0} := 1-\sum_{j=1}^k \eta_j \quad (\text{so supp($\eta_0$) $\subseteq \Omega_{(1+4R)\epsilon}\backslash \bigcup_{j}B_j'$ })
\end{align*}

Then define 
\begin{align}
  \mathcal{L}^{j,\epsilon}:= (\Theta_{s,j}^{-1})^*(\hat{\mathcal{L}^{s,j}}).
\end{align}
Now, define 
\begin{align}
    \mathcal{L}^{\epsilon}u:= \eta_{0}(\mathcal{L}u) + \sum_{j=1}^k\eta_j \left( \mathcal{L}^{j, \epsilon} u\right).
\end{align}

This will give us that $u^{\epsilon}$ is a viscosity solution to the operator given by  $$\mathcal{L}_{x_0}^{\epsilon}:= (\Theta_{s}^{-1})^*(\hat{\mathcal{L}^s}),$$
i.e, the pull back of the perturbed operator $\hat{\mathcal{L}^h}$ in the $t-$space the diffeomoeprhism $\Theta_{s}^{-1}.$ Hence we are done. 
 
\end{proof}
\begin{remark}
    The points $\{x_1, \cdots x_k\}$ from \ref{point 3 approximating by viscosity solution} should be thought of as the elements of $\argmax_{\Omega_{(1+4R)\epsilon}}(u-v)$(this is a finite set since $\bar{\Omega}$ is compact); see proof \ref{final proof}.
\end{remark}
\section{Comparison principle for semi convex/concave solutions}\label{Comparison principle for semi convex/concave solutions}
In this section, we will prove Theorem \ref{Introduction: Theorem 1.1 of MM} by proving comparison principles of varying level of difficulty for viscosity super/sub-solutions of 
\begin{align}\label{Semi-convex solutions: equation 3.1 of MM}
    \mathcal{L}u=0-\Tr\left(A(\mathfrak{X}u)\mathfrak{X}^*\mathfrak{X}u\right)+ H(\mathfrak{X}u), 
\end{align}
with $A$ and $H$ as in \eqref{introduction: equation 1.6 of MM} and \eqref{introduction: equation 1.7 of MM}.   

Following the structure of the proof in \cite{MM}, we wil first assume the sub/super solutions are semi convex and semi concave and hence, they are differentiable at maximal points and the gradients are partially continuous. 
\begin{lemma}\label{semi convex solutions: lemma 3.1 of MM}
    If there exists $u,v\in C(\bar{\Omega})$ that are respectively semi convex and semi concave such that $\mathcal{L}u < 0 \geq \mathcal{L}v$ (resp $\mathcal{L}v \leq 0 < \mathcal{L} v$) in the viscosity sense near the points in $\argmax (u-v)$ and $u\leq v$ in $\partial \Omega$. Moreover, assume that $u$ and $v$ are twice differentiable at a point in $\argmax(u-v)$, then we have $u\leq v$ in $\partial \Omega$. 
\end{lemma}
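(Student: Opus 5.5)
The statement as printed has typos: the conclusion should read $u\le v$ in $\Omega$, and the hypothesis should read $\mathcal{L}u<0\le\mathcal{L}v$ (resp. $\mathcal{L}u\le 0<\mathcal{L}v$). I read "$\mathcal{L}u<0$ in the viscosity sense" quantitatively: there is $c>0$ with $\mathcal{L}\phi(x)\le -c$ for all $\phi\in\mathcal{A}^+_x(u,\Omega)$ at the relevant points. Without such a uniform margin the strict inequality could be lost in a limit, and this is the point of the strictness assumption. The plan is to argue by contradiction and use twice differentiability to turn the viscosity inequalities at a maximum point into classical inequalities. Those inequalities are then incompatible with the degenerate ellipticity \eqref{eq 2.2 of MM}.

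\textbf{Setup.} Suppose $\max_{\bar\Omega}(u-v)>0$. Since $u\le v$ on $\partial\Omega$, the maximum is attained in $\Omega$. By hypothesis there is $x_0\in\argmax(u-v)$ at which both $u$ and $v$ are twice differentiable. Work in a coordinate chart around $x_0$ and write $p=\nabla u(x_0)$, $P=D^2u(x_0)$, $q=\nabla v(x_0)$, $Q=D^2v(x_0)$. The classical first- and second-order conditions for an interior maximum of $u-v$ give $p=q$ and $P\le Q$.

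\textbf{Subsolution inequality at $x_0$.} For $\delta>0$ take the quadratic test function
\begin{align*}
\phi_\delta(x)=u(x_0)+\langle p,x-x_0\rangle+\tfrac12\langle (P+\delta\mathbb{I})(x-x_0),x-x_0\rangle .
\end{align*}
By the second-order Taylor expansion of $u$ at $x_0$, $\phi_\delta$ touches $u$ from above at $x_0$ on a small ball. Localizing is harmless, since the viscosity property is local. If $p\ne 0$, then $\phi_\delta\in\mathcal{A}^+_{x_0}(u,\Omega)$, so $\mathcal{L}\phi_\delta(x_0)\le -c$. Using the Euclidean form \eqref{euclidean form of operator}, $\mathcal{L}\phi_\delta(x_0)$ equals
\begin{align*}
-\Tr\big(A(\sigma^Tp)[\sigma^T(P+\delta\mathbb{I})\sigma+M(x_0,p)]\big)+H(\sigma^Tp).
\end{align*}
This is continuous in $\delta$, so letting $\delta\to0$ yields the classical inequality $G_u:=-\Tr\big(A(\xi)Z_u\big)+H(\xi)\le -c<0$. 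Here $\xi=\sigma(x_0)^Tp$ and $Z_u=\sigma^TP\sigma+M(x_0,p)$.

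\textbf{Supersolution inequality and contradiction.} Symmetrically, with $P+\delta\mathbb{I}$ replaced by $Q-\delta\mathbb{I}$, the supersolution property gives $G_v:=-\Tr(A(\xi)Z_v)+H(\xi)\ge 0$, where $Z_v=\sigma^TQ\sigma+M(x_0,q)$. Both expressions share the same $\xi$ and the same $M$ term, because $p=q$. Moreover $P\le Q$ implies $\sigma^TP\sigma\le\sigma^TQ\sigma$, so $Z_u\le Z_v$. By \eqref{eq 2.2 of MM} with $A(\xi)\ge0$ this gives $G_u\ge G_v\ge0$, contradicting $G_u\le -c<0$. Hence $u\le v$ in $\Omega$.

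\textbf{Main obstacle: the case $p=0$.} Then the test classes $\mathcal{A}^\pm_{x_0}$ are empty, because they require $\nabla\phi\ne0$. In that case the viscosity inequalities at $x_0$ are vacuous. My first attempt would be to use the freedom in choosing the maximum point. By the remark after Jensen's lemma, one can replace $u$ by $u+\langle\tilde p,x\rangle$ with $|\tilde p|$ arbitrarily small, so that a nearby twice-differentiable maximum point has nonzero gradient. Continuity of $A,H$ in $\xi$, together with the uniform margin $c$, should absorb the error introduced by the perturbation.

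Failing that, I would interpret "near $\argmax(u-v)$" as allowing the degenerate case to be handled separately. Indeed this is exactly the role of the later non-degeneracy lemma (Lemma \ref{Semi convex solutions: Lemma 3.4 from MM}) and of Proposition \ref{Proposition 3.5 of MM}. So for the present lemma it suffices to assume $\nabla u(x_0)\ne0$ at the chosen point.
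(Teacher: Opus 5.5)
Your argument follows the paper's proof. Both argue by contradiction at an interior maximum point $x_0$ where $u$ and $v$ are twice differentiable, use equal horizontal gradients and ordered second-order terms there, and play monotonicity of $Z\mapsto\Tr(A(\xi)Z)$ for $A(\xi)\ge 0$ against a uniform strictness margin; your $c$ is the paper's $\gamma$, and your reading of the typos is the one the paper's proof uses. You are more careful than the paper at two points. The paper simply evaluates $\mathcal{L}u(x_0)$ and $\mathcal{L}v(x_0)$ classically, whereas you derive those inequalities from the viscosity ones via $\phi_\delta$, which is exactly where the margin is needed. You also check that the first-order term $M(x_0,\cdot)$ inside $\XX^*\XX$ agrees for $u$ and $v$ because $p=q$; this is what actually justifies the paper's $\XX^*\XX u(x_0)\le\XX^*\XX v(x_0)$.

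The paper does not address the case $p=0$ either. Under the literal definition of $\mathcal{A}^{\pm}_x$ (test functions with $\nabla\phi\neq 0$), the hypotheses say nothing at such a point, so neither proof covers it. However, $A$ and $H$ are continuous on all of $\RR^m$, so that restriction is unnecessary. Without it, your $\phi_\delta$ argument goes through verbatim, using $M(x_0,0)=0$, and this is in effect what the paper's classical evaluation at $x_0$ presupposes.

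Your first proposed remedy would not work. A Jensen perturbation makes the gradient of $u+\langle\tilde p,x\rangle$ nonzero, but the strict subsolution information at the new maximum point $z$ concerns $u$ itself. That information is non-vacuous only if $\nabla u(z)\neq 0$. If $\nabla u$ vanishes on a neighbourhood of $x_0$ (e.g.\ $u$ locally constant), no choice of $\tilde p$ produces such a point.

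Your fallback of assuming $\nabla u(x_0)\neq 0$ is harmless for the paper's purposes. The argument is only re-run, in the proof of Lemma~\ref{Semi convex solutions: Lemma 3.4 from MM}, at points where $|\XX u|$ is bounded away from zero.
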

\begin{proof}
    We follow proof by contradiction. Assume the contrary i.e. $\exists x\in \Omega$ such that $u(x)> v(x)$, i.e. $\exists x_0\in \bar{\Omega}$ such that 
    \begin{equation*}
        u(x_0) -v(x_0) = \underset{x\in \Omega}{\max} \ \{u(x)-v(x)\}>0.
    \end{equation*}
    Since $u\leq v$ in $\partial \Omega$ we have $x_0 \in \bar{\Omega}\backslash \partial \Omega$. Now, since we have assumed that $u, v$ are differentiable at the maximal points of $u-v$, we get that $u,v$ are differentiable at $x_0$. 

    Since, $x_0\in \argmax(u-v)$ we have $D^2 u(x_0)\leq D^2 v(x_0)$, which gives 
    \begin{align*}
        &\XX u(x_0) = \XX v(x_0) =: \xi_0\\
        &\XX^*\XX u(x_0) \leq \XX^* \XX v(x_0). 
    \end{align*}
    The given condition implies that there exists $\gamma>0$ such that $\mathcal{L}v-\mathcal{L}u \geq \gamma >0$, which together with the above gives
    \begin{align*}
        0&\leq \Tr (A(\xi_0) (\XX^*\XX v(x_0)- \XX^*\XX u(x_0)))\\
        &= \Tr (A (\XX v(x_0))\XX^*\XX v(x_0))- \Tr (A (\XX u(x_0))\XX^*\XX u(x_0))\\
        &= -\mathcal{L} v(x_0)+\mathcal{L} u(x_0)\leq -\gamma <0, 
    \end{align*}
which is a contradiction and hence completing the proof. 
\end{proof}
\begin{remark}
    Lemma \ref{semi convex solutions: lemma 3.1 of MM} is the first instance of the realization that the comparison theorem is a local property near the maximum points of $u-v$ and hence we do not need that $u$ and $v$ are viscosity solutions ion the entire domain $\Omega$. 
\end{remark}
In the rest of the section we will prove stronger versions of Lemma \ref{semi convex solutions: lemma 3.1 of MM} by relaxing the assumption to $\mathcal{L}u\leq 0 \leq \mathcal{L}v$. Similar to \cite{MM}, given a sub-solution $u$ we construct perturbations $u_{\lambda}$ for $\lambda>0$ small enough such that $u_{\lambda}$ are strict sub-solutions and satisfy the conditions of Lemma $3.1$.

Next, we will state two technical lemmas without proof as they follow the same proof as in \cite{MM}. 

\begin{lemma}\label{semiconvex solutions: lemma 3.2 of MM}\cite[Lemma 3.2]{MM}
    Given any $h\in C^{2}(\RR)$ with $h'\geq 1, h''\geq 0$ and $\omega:\Omega\to \RR$ twice differentiable at $x_0\in \Omega$, we have the following, 
    \begin{align}\label{semiconvex solutions: equation 3.5 of MM}
        \mathcal{L}(h\circ \omega)(x_0)\leq \frac{1}{\phi(1/h'(\omega))} \left[ \mathcal{L}\omega(x_0)-\frac{h''(\omega)}{h'(\omega)} \mathcal{E}(\mathfrak{X} \omega(x_0))\right],
    \end{align}
    where $\mathcal{E}$ is as in \eqref{introduction: equation 1.6 of MM} and $\phi: (0,1]\to (0,1]$ is as in \eqref{introduction: equation 1.7 of MM}.
\end{lemma}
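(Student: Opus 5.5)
The plan is to compute $\XX(h\circ\omega)$ and $\XX^*\XX(h\circ\omega)$ at $x_0$ by the chain rule, and then use the scaling condition \eqref{introduction: equation 1.7 of MM} with $t=h'(\omega(x_0))\geq 1$ to pull the factor $h'$ out of $A$ and $H$. Write $\xi=\XX\omega(x_0)$, $t=h'(\omega(x_0))$ and $s=h''(\omega(x_0))\geq 0$. Since each $X_i$ is a first-order derivation, we have $X_i(h\circ\omega)=h'(\omega)X_i\omega$, so $\XX(h\circ\omega)(x_0)=t\xi$.

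For the second-order term we use $X_i^*=-X_i+c_i(x)$. Here $c_i$ is a zeroth-order term, namely the divergence of $X_i$ with respect to $Vol$. Applying $X_i^*$ to $h'(\omega)X_j\omega$ and using the product rule on the first-order part gives
\begin{align*}
X_i^*X_j(h\circ\omega)=h'(\omega)X_i^*X_j\omega-h''(\omega)X_i\omega\,X_j\omega .
\end{align*}
This works because the zeroth-order part is linear and simply passes through the factor $h'$. After symmetrizing,
\begin{align*}
\XX^*\XX(h\circ\omega)(x_0)=t\,\XX^*\XX\omega(x_0)-s\,\xi\otimes\xi .
\end{align*}
Checking this identity honestly, including the sign convention for $\XX^*\XX$ and the behaviour of the zeroth-order parts of the adjoints, is the main point to verify. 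It is the only place where the general manifold setting differs from \cite{MM}, and the sign of the $\xi\otimes\xi$ term must come out negative.

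Next substitute into $\mathcal{L}$:
\begin{align*}
\mathcal{L}(h\circ\omega)(x_0)=-t\Tr\big(A(t\xi)\XX^*\XX\omega(x_0)\big)+s\langle A(t\xi)\xi,\xi\rangle+H(t\xi).
\end{align*}
Now apply \eqref{introduction: equation 1.7 of MM} with matrix $X=\XX^*\XX\omega(x_0)$. The first term is then at most $\frac{t}{t\phi(1/t)}\big(-\Tr(A(\xi)X)\big)$, and $H(t\xi)\leq H(\xi)/\phi(1/t)$. For the middle term, write $s\langle A(t\xi)\xi,\xi\rangle=-\frac{s}{t}\big(-\Tr(A(t\xi)\,t\,\xi\otimes\xi)\big)$. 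Then apply \eqref{introduction: equation 1.7 of MM} with $X=-\xi\otimes\xi$ (up to the factor $t$), or use \eqref{2.3 of MM}. This gives
\begin{align*}
s\langle A(t\xi)\xi,\xi\rangle\geq \frac{s}{t\phi(1/t)}\mathcal{E}(\xi)
\end{align*}
in the direction needed. More precisely, the term $+s\langle A(t\xi)\xi,\xi\rangle$ has to be bounded above by $-\frac{s}{t\phi(1/t)}\mathcal{E}(\xi)$ after the signs are arranged.

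This is the step I expect to be the main obstacle. The scaling inequality has a one-sided direction, so I need to check that it is used with the matrix $-\xi\otimes\xi$, on which it gives the correct bound. Note that \eqref{introduction: equation 1.7 of MM} holds for all symmetric $X$. Applying it to $X=-t\,\xi\otimes\xi$ yields $t\langle A(t\xi)\xi,\xi\rangle\leq \frac{1}{t\phi(1/t)}t\,\mathcal{E}(\xi)$. Since $\mathcal{L}$ contains $-\Tr(A\cdot\XX^*\XX)$, the contribution $-s\,\xi\otimes\xi$ enters with exactly this sign combination.

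Collecting the three terms gives
\begin{align*}
\mathcal{L}(h\circ\omega)(x_0)\leq\frac{1}{\phi(1/t)}\Big[\mathcal{L}\omega(x_0)-\frac{s}{t}\mathcal{E}(\xi)\Big],
\end{align*}
which is \eqref{semiconvex solutions: equation 3.5 of MM}. If $\xi=0$, the degenerate case is handled directly: both sides reduce to $-t\Tr(A(0)X)+H(0)$ versus $\frac{1}{\phi(1/t)}\mathcal{L}\omega(x_0)$, with $\mathcal{E}(0)=0$, and the same scaling inequality covers it.
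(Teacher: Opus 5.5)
\textbf{The gap is a sign error.} Under the literal adjoint convention you adopt, your identity $\XX^*\XX(h\circ\omega)(x_0)=t\,\XX^*\XX\omega(x_0)-s\,\xi\otimes\xi$ is correct. But then $\mathcal{L}(h\circ\omega)(x_0)$ contains the term $+s\langle A(t\xi)\xi,\xi\rangle\geq 0$. No estimate can bound this above by $-\frac{s}{t\phi(1/t)}\mathcal{E}(\xi)\leq 0$. What \eqref{introduction: equation 1.7 of MM} with the matrix $-t\,\xi\otimes\xi$ actually gives is $s\langle A(t\xi)\xi,\xi\rangle\leq \frac{s}{t\phi(1/t)}\mathcal{E}(\xi)$. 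Collecting terms then yields $\frac{1}{\phi(1/t)}\big[\mathcal{L}\omega(x_0)+\frac{s}{t}\mathcal{E}(\xi)\big]$, which has the wrong sign. Your final collecting step flips it without justification: you correctly identified this sign as the crux, but then asserted the needed direction instead of deriving it. In fact, under that convention the statement is false. Take $A\equiv\mathbb{I}_m$, $H\equiv 0$, $\phi(\lambda)=\lambda$, which satisfy both structure conditions. Then $\mathcal{L}(h\circ\omega)(x_0)=t\,\mathcal{L}\omega(x_0)+s|\xi|^2$, whereas the claim is that this is at most $t\,\mathcal{L}\omega(x_0)-s|\xi|^2$, which fails whenever $s>0$ and $\xi\neq 0$.

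\textbf{How to fix it.} Read the matrix in $\mathcal{L}$ as the symmetrized horizontal Hessian $\frac12(X_iX_j+X_jX_i)u$, not as something built from formal adjoints. This is what the paper's own Euclidean expression $\sigma^TD^2u\,\sigma+M(x,\nabla u)$ encodes, and it is the convention of \cite{MM}, to which the paper defers for this proof. With that reading, $X_iX_j(h\circ\omega)=h'(\omega)X_iX_j\omega+h''(\omega)X_i\omega\,X_j\omega$. So the relevant matrix is $Y=tX+s\,\xi\otimes\xi$, where $X$ is the symmetrized Hessian of $\omega$ at $x_0$. A single application of \eqref{introduction: equation 1.7 of MM} to $Y$, with $t=h'(\omega(x_0))\geq 1$ and $H(t\xi)\leq H(\xi)/\phi(1/t)$, gives
\begin{align*}
\mathcal{L}(h\circ\omega)(x_0)=-\Tr\big(A(t\xi)Y\big)+H(t\xi)\leq \frac{-\Tr(A(\xi)Y)}{t\phi(1/t)}+\frac{H(\xi)}{\phi(1/t)}=\frac{1}{\phi(1/t)}\Big[\mathcal{L}\omega(x_0)-\frac{s}{t}\mathcal{E}(\xi)\Big].
\end{align*}
For $\xi=0$, note that \eqref{introduction: equation 1.7 of MM} is assumed only for $\xi\neq 0$. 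It extends to $\xi=0$ by continuity of $A$ and $H$, and you should say so explicitly.
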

\begin{lemma}\label{semiconvex-solutions: lemma 3.3 of MM} \cite[Lemma 3.3]{MM}
    Let $w_p(x)= w(x)+p\cdot x$ for any $p\in \RR^n$. If $w$ is twice differentiable at $x\in \Omega$ and $|p|\leq 1$, then we have 
    \begin{align}\label{semiconvex solutions: equation 3.6 of MM}
        \left|\mathcal{L}w_{p}(x)-\mathcal{L}w(x)\right|\leq c\left[ \omega_A(|p|)(|\mathfrak{X}\mathfrak{X}w(x)|+|p|)+ |A(\mathfrak{X}w(x))||p|+ \omega_H(|p|)\right],
    \end{align}
    for some constant $c=c(n, \|\sigma\|_{L^{\infty}}, \|D\sigma\|_{L^{\infty}})>0$.
\end{lemma}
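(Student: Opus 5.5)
This is a pointwise estimate at the single point $x_0$. I would reduce it to the Euclidean form \eqref{euclidean form of operator} and a Lipschitz/modulus argument on the coefficients. Write $w_p=w+p\cdot x$, so that $\nabla w_p(x)=\nabla w(x)+p$ and $D^2w_p(x)=D^2w(x)$. Hence
\begin{align*}
\XX w_p(x)=\XX w(x)+\sigma(x)^Tp,\qquad \XX^*\XX w_p(x)=\XX^*\XX w(x)+M(x,p),
\end{align*}
where we use that $M(x,\cdot)$ is linear. This gives $|\XX w_p(x)-\XX w(x)|\le \|\sigma\|_{L^\infty}|p|$ and $|M(x,p)|\le c\|D\sigma\|_{L^\infty}\|\sigma\|_{L^\infty}|p|$. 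Here $\omega_A,\omega_H$ denote moduli of continuity of $A,H$. I would take them on a fixed compact set containing $\XX w(x)+B_{\|\sigma\|_\infty}$; since $|p|\le 1$, all gradients stay in that set. Modifying $\omega_A$ by the factor $\|\sigma\|_{L^\infty}$, which can be absorbed into $c$, we get $|A(\XX w_p)-A(\XX w)|\le \omega_A(|p|)$ up to constants.

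Next I would split the difference of the trace terms with the usual add-and-subtract:
\begin{align*}
\Tr\big(A(\XX w_p)\XX^*\XX w_p\big)-\Tr\big(A(\XX w)\XX^*\XX w\big)
=\Tr\big((A(\XX w_p)-A(\XX w))\XX^*\XX w_p\big)+\Tr\big(A(\XX w)M(x,p)\big).
\end{align*}
The second term is bounded by $c|A(\XX w(x))||p|$. For the first, I use the Frobenius Cauchy--Schwarz inequality, $|\Tr(BC)|\le\|B\|\|C\|$. This gives the bound $\omega_A(|p|)\big(|\XX^*\XX w(x)|+c|p|\big)$. The term $|\XX^*\XX w(x)|$ differs from $|\XX\XX w(x)|$ only by first-order terms $X_i^*=-X_i+\mathrm{div}$, in the form $\sigma^TD^2w\sigma+M(x,\nabla w)$. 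The $H$ difference is directly at most $\omega_H(c|p|)\lesssim \omega_H(|p|)$, after absorbing constants into the modulus, for instance by assuming subadditivity or redefining $\omega_H$.

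The only delicate point is bookkeeping. The statement measures the second-order part by $|\XX\XX w(x)|$, whereas the operator uses the symmetrized $\XX^*\XX w(x)$. The difference $\XX^*\XX w-\mathrm{sym}(\XX\XX w)$ is a zeroth-order-in-$D^2$ term, namely a combination of $\XX w$ with smooth coefficients controlled by $\|D\sigma\|_\infty$. It is therefore bounded by $c|\nabla w(x)|$, which is a quantity not present on the right-hand side. I would handle this in the way \cite{MM} does: interpret $|\XX\XX w(x)|$ as the norm of the symmetrized second-order matrix $\XX^*\XX w(x)$, which is the matrix actually entering $\mathcal{L}$. With that convention, the estimate \eqref{semiconvex solutions: equation 3.6 of MM} follows with $c=c(n,\|\sigma\|_{L^\infty},\|D\sigma\|_{L^\infty})$. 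Everything else is a routine triangle inequality.
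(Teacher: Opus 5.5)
Your argument is correct and is essentially the proof the paper relies on. The paper gives no proof of its own and defers to \cite[Lemma 3.3]{MM}, which proceeds the same way: write $\XX w_p=\XX w+\sigma^Tp$ and $\XX^*\XX w_p=\XX^*\XX w+M(x,p)$, then split the trace term by adding and subtracting $A(\XX w)$. The ``delicate point'' you raise is vacuous, because with the paper's formula $\XX^*\XX u=\sigma^TD^2u\,\sigma+M(x,\nabla u)$ this matrix is exactly the symmetrization $\frac12(X_iX_ju+X_jX_iu)$ (the $M$-term is precisely the first-order part of $X_iX_ju$), so $|\XX^*\XX w(x)|\le|\XX\XX w(x)|$ in Frobenius norm and neither an extra $|\nabla w(x)|$ term nor a special convention is needed.
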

We refer the reader to \cite{MM} for the proof of Lemma \ref{semiconvex solutions: lemma 3.2 of MM} and \ref{semiconvex-solutions: lemma 3.3 of MM}. 

\begin{lemma}\label{Semi convex solutions: Lemma 3.4 from MM}
Let $u,v\in C(\overline{\Omega})$ be such that $u\leq v$ in $\partial \Omega$ and $\mathcal{L}u\leq 0\leq \mathcal{L}v$ in $\Omega$ in viscosity sense. If $\mathfrak{X}u$(resp. $\mathfrak{X}v$) does not vanish at all maximal points of $u-v$, and $u$ and $v$ are respectively semi-convex and semi-concave in a neighborhood of maximal points of $u-v$, then $u\leq v$ in $\Omega$. 
\end{lemma}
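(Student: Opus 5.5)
We argue by contradiction, following the scheme of \cite[Lemma 3.4]{MM} and reducing to Lemma \ref{semi convex solutions: lemma 3.1 of MM}. Suppose $\max_{\bar\Omega}(u-v)=:\mu>0$. Since $u\le v$ on $\partial\Omega$, every point of $\argmax(u-v)$ is interior, and the set is compact. By the first property of semi-convex/semi-concave functions, $u$ and $v$ are differentiable on $\argmax(u-v)$ and $\XX u=\XX v$ there. By hypothesis this common gradient does not vanish. By partial continuity of the gradient and compactness, there are $\theta>0$ and a neighborhood $N$ of $\argmax(u-v)$ such that $|\XX u|\ge 2\theta$ at every point of differentiability in $N$.

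\textbf{Strict subsolutions.} Fix $\lambda>0$ small and set $u_\lambda=h_\lambda\circ u$, where $h_\lambda(s)=s+\lambda e^{s}$ (shifting $u$ by a constant if needed). Then $h_\lambda'\ge 1$, $h_\lambda''>0$, and $h_\lambda\to \mathrm{id}$ in $C^2$ on bounded sets. Lemma \ref{semiconvex solutions: lemma 3.2 of MM}, applied to test functions $\omega$ touching $u$ from above near points of $N$, gives
\begin{align*}
\mathcal{L}(h_\lambda\circ\omega)(x)\le \frac{1}{\phi(1/h_\lambda'(\omega))}\Big[\mathcal{L}\omega(x)-\frac{h_\lambda''}{h_\lambda'}\mathcal{E}(\XX\omega(x))\Big]\le -c\lambda\, \frac{a_\theta}{\phi(1/h'_\lambda)}=: -\gamma_\lambda<0,
\end{align*}
using $\mathcal{L}\omega\le 0$, $|\XX\omega(x)|\ge\theta$ (from the gradient bound) and \eqref{2.4 of MM}. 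Since $h_\lambda$ is increasing, test functions of $u_\lambda$ are exactly of the form $h_\lambda\circ\omega$. Hence $u_\lambda$ is a strict viscosity subsolution, $\mathcal{L}u_\lambda\le-\gamma_\lambda$, near the maximal points. Because $h_\lambda$ is smooth, increasing and convex, $u_\lambda$ stays semi-convex. Moreover $u_\lambda\to u$ uniformly, so for small $\lambda$ we still have $\max(u_\lambda-v)>0$ and $u_\lambda-v<\mu/2$ near $\partial\Omega$. Also $\argmax(u_\lambda-v)\subset N$, and $\XX u_\lambda=h'_\lambda\XX u$ stays nonvanishing there.

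\textbf{Second-order differentiability.} Pick $\hat x\in\argmax(u_\lambda-v)$. The function $w=u_\lambda-v$ is semi-convex near $\hat x$. Apply Jensen's lemma (Lemma \ref{Background: Jensen's lemma}) in the coordinate patch together with Aleksandrov's theorem (Theorem \ref{Background: Aleksandrov's theorem}), applied to both $u_\lambda$ and $v$. This yields $|p|<\delta$ and a point $z\in B_r(\hat x)$ that maximizes $u_\lambda+p\cdot x-v$ and at which both $u_\lambda$ and $v$ are twice differentiable. Viscosity inequalities pass to pointwise ones at points of twice differentiability. So $\mathcal{L}u_\lambda(z)\le-\gamma_\lambda$ and $\mathcal{L}v(z)\ge 0$ classically, by taking the second-order Taylor polynomial plus $\pm\eta|x-z|^2$ as a test function and letting $\eta\to0$.

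Lemma \ref{semiconvex-solutions: lemma 3.3 of MM} then gives
\begin{align*}
\mathcal{L}(u_\lambda)_p(z)\le -\gamma_\lambda+c\big[\omega_A(\delta)(|\XX\XX u_\lambda(z)|+\delta)+|A(\XX u_\lambda(z))|\delta+\omega_H(\delta)\big].
\end{align*}
Semi-convexity of $u_\lambda$ and semi-concavity of $v$, together with $D^2u_\lambda(z)\le D^2v(z)$ at the maximum, bound $|D^2u_\lambda(z)|$ uniformly in $\delta$. So for $\delta$ small the right side is $\le-\gamma_\lambda/2<0$. Now $(u_\lambda)_p$ and $v$ satisfy the hypotheses of Lemma \ref{semi convex solutions: lemma 3.1 of MM} at $z$, with strict inequality for the subsolution. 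For $\delta$ small, $(u_\lambda)_p\le v$ on $\partial\Omega$ still fails to matter, since the maximum of $(u_\lambda)_p-v$ remains positive and interior. This gives the contradiction.

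\textbf{Main obstacle.} The delicate step is the uniform control near the maximal points. We need the lower bound $|\XX u|\ge\theta$ for test functions, not merely at points of differentiability, in order to invoke \eqref{2.4 of MM}. For this we use that a $C^2$ function touching a semi-convex $u$ from above at a point forces differentiability of $u$ there, with $\nabla\omega=\nabla u$. Combined with partial continuity of the gradient, this gives $|\XX\omega|\ge\theta$ on $N$. We also need the uniform Hessian bound at $z$ to absorb the error from Lemma \ref{semiconvex-solutions: lemma 3.3 of MM}. Everything is local near $\argmax$, consistent with the remark after Lemma \ref{semi convex solutions: lemma 3.1 of MM}.
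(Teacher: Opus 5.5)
Your proposal is correct and follows essentially the paper's route: you build a strict subsolution $u_\lambda=h_\lambda\circ u$ via Lemma \ref{semiconvex solutions: lemma 3.2 of MM} and \eqref{2.4 of MM}, use Jensen's lemma with Aleksandrov's theorem to reach a nearby maximum point of twice differentiability, absorb the linear perturbation with Lemma \ref{semiconvex-solutions: lemma 3.3 of MM}, and conclude with the pointwise ellipticity contradiction of Lemma \ref{semi convex solutions: lemma 3.1 of MM}. Your deviations are minor and, if anything, cleaner than the paper's argument: the choice $h_\lambda(s)=s+\lambda e^{s}$, proving strictness at the test-function level with a uniform gradient bound on a neighbourhood of $\argmax(u-v)$ (instead of tracking $x_\lambda\to x_0$ with a modulus for $\mathcal{E}$), and perturbing only $u_\lambda$.
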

\begin{proof}
    Assume the contrary. By adding small constants, we can regard $u<v$ in $\partial \Omega$. Therefore, without loss of generality, we can assume $u-v\leq -\epsilon<0$ in $\partial \Omega$ for a number $\epsilon>0$. Since, $u$ and $v$ are semi-convex and semi-concave near maximal points of $u-v$ we can assume that there exists $\delta>0$ such that these neighborhoods have diameter at least $\delta$ (we can do this since $\overline{\Omega}$ is compact). 

The contrary hypothesis implies that $u(x)>v(x)$ for some $x\in \Omega$ and since $u\leq v$ in $\partial \Omega$, hence the maximal points of $u-v$ are in the interior. Thus, $\argmax_{\Omega}(u-v)$, 
\begin{align*}
    u(y)-v(y)= \max_{\Omega}\{u(x)-v(x)\}=:M_0>0,
\end{align*}
and according to the given condition $\mathfrak{X}u(y)\neq 0$. Now, let $u_{\lambda}=h_{\lambda}(u)$ for $\lambda>0$, defined by 
\begin{align*}
    h_{\lambda}(u)= u+ \lambda(u-u_0)^2,
\end{align*}
where $u_0=\inf_{\Omega} \ u$, so that $h_{\lambda}'(u)=1+ 2\lambda(u-u_0)\geq 1$ and $h_{\lambda}''(u)=2\lambda>0$. Also, $h_{\lambda}\rightarrow id$ as $\lambda\rightarrow 0^+$ and we have 
\begin{align}\label{eq 3.7 from MM}
    \|u_{\lambda}-u\|_{L^{\infty}}\leq 4\lambda \|u\|_{L^{\infty}}^2, 
\end{align}
for any $\lambda>0$. For a sequence $x_{\lambda}\in \argmax(u_{\lambda}-v)$ such that $x_{\lambda}\rightarrow x_0$ up to possible subsequence as $\lambda\to 0^+$, we have $x_0\in \argmax(u-v)$. Since $\mathfrak{X}u(x_0)\neq 0$ we have 
\begin{align*}
    |\mathfrak{X}u(x_0)|\geq \theta\ \text{for some}\ \theta>0.
\end{align*}
Therefore, $\mathfrak{X}u(x_0)= h_{\lambda}'(u)\mathfrak{X}u(x_0)=(1+2\lambda (u-u_0))\mathfrak{X}u(x_0)\neq 0$ with $|\mathfrak{X}u_{\lambda}(x_0)|\geq \theta$. Without loss of generality let us also 
assume $\lambda$ is large enough such that $d(x_{\lambda}-x_0)<\delta/2$. Observe that \eqref{eq 3.7 from MM} gives 
\begin{align*}
\max_{x\in \Omega} (u_{\lambda}-v) \geq \max_{x\in \Omega} (u-v)- \|
u_{\lambda}-u\|_{L^{\infty}}\geq M_0- 4\lambda \|u\|^2_{L^{\infty}}>0,
\end{align*}
whenever $0<\lambda< M_0
/ 4\|u\|_{L^{\infty}}^2$. Furthermore, the fact that $u-v\leq -\epsilon< 0$ in $\partial \Omega$ along with \eqref{eq 3.7 from MM} in $\partial \Omega$ for any $0< \lambda< \epsilon/4\|u\|^2_{L^{\infty}}$. This implies the maximum is interior, i.e $x_{\lambda}\in \Omega$. 

As u (respectively $-v$) is semi-convex in a neighborhood say $U$(diameter at least $\delta$) of $x_0$, there exists $\Lambda>0$ such that $u+\frac{1}{2}\Lambda |x|^2$ is convex in $U$, hence it is locally Lipschitz and $\|\nabla u\|_{L^{\infty}(U)}\leq c\|u\|_{L^{\infty}(U)}$ for some $c= c(n, \delta)>0$ in compact subsets of $U$. Therefore, for a choice of 
\begin{align*}
    \Lambda'> \Lambda (1+4\|u\|_{L^{\infty}}) +2c^2\|u\|_{L^{\infty}}^2,
\end{align*}
it is not hard to verify that $u_{\lambda}+\frac{1}{2}\Lambda'|x|^2$ is also convex in $U$ for any $0<\lambda<1$. From now on we restrict our study to points inside $U$ unless otherwise specified. We also assume that $U$ is contained in coordinate patch of the manifold $M$.We have 
\begin{align*}
\Lambda'|\xi|^2> \Lambda (1+4\|u\|_{L^{\infty}(U)})|\xi^2|+ 2\|\nabla u\|_{L^{\infty}(U)}^2 |\xi^2|> \Lambda (1+ 2(u-u_0))|\xi|^2+ 2\lambda (\nabla u\cdot  \xi)^2,
\end{align*}
for any $\xi\in \RR^n$, and thereby $\Lambda' \mathbb{I}> \Lambda (1+2(u-u_0))\mathbb{I}\pm 2\lambda (\nabla u\otimes \nabla u)$ a.e. in $U$ in the sense of matrices. Therefore, at a.e. $x\in \Omega$ that are points of twice differentiability, we have 
\begin{align*}
    D^2u_{\lambda}(z)&= D^2u(z)(1+2\lambda (u(z)-u_0))+ 2\lambda (\nabla u(z)\otimes \nabla u(z))\\
    & \geq -\Lambda (1+2\lambda(u(z)-u_0))\mathbb{I}+ 2\lambda (\nabla u(z)\otimes \nabla u(z)) \geq -\Lambda' \mathbb{I};
\end{align*}
we conclude that $u_{\lambda}$ is also semi-convex inside $U$. Now, differentiability at maximal points with interior maxima at $x_{\lambda}$
 implies $\nabla u_{\lambda}(x_{\lambda})= \nabla v(x_{\lambda})$. Since $x_{\lambda}\to x_0$ and from partial continuity of the gradient, $\nabla u(x_{\lambda})\to \nabla u(x_0)$ as $\lambda \to 0^+$, there exists $\lambda_0= \lambda_0(n, \theta, \|u\|_{L^{\infty}}+ \|v\|_{L^{\infty}}, \delta)>0$ small enough, such that $c\omega(d(x_{\lambda}))\leq \theta/2$ for any $0<\lambda < \lambda_0$ where $\omega$ is the modulus of continuity of the gradient, $c=c(n, \|\sigma\|_{L^{\infty}})>0$ so that we have $\mathfrak{X}u(x_{\lambda})\neq 0 $ with 
 \begin{align*}
     |\mathfrak{X}u(x_{\lambda})|\geq \theta/2, \ \forall \ 0<\lambda<\lambda_0.
 \end{align*}
 Therefore, from \eqref{2.4 of MM} we have 
 \begin{align}\label{semiconvex solutions: 3.8 of MM}
     \mathcal{E} (\mathfrak{X}u(x_{\lambda})) \geq \frac{a_{\theta}/2}{ \phi(\theta/2 |\mathfrak{X}u(x_{\lambda})|)}=: e_{\theta}(\lambda) >0. 
 \end{align}
 However, $u_{\lambda}$ and $v$ might not be twice differentiable at $x_0$ or $x_{\lambda}$ or for any $\lambda>0$. Since $U$ is contained in a coordinate patch we can make sense of linear perturbations of the solutions as in Aleksandrov's theorem (\ref{Background: Aleksandrov's theorem}) as we can make sense of usual Euclidean dot product for points on $U$. Therefore, we can use Jensen's Lemma (Lemma \ref{Background: Jensen's lemma}) and Theorem \ref{Background: Aleksandrov's theorem}, to enable linear perturbations locally. Let $\psi_{k}$ be a smooth bump function supported near $x_{\lambda}$ supported in a ball in $\{x: x\in U, ||x-x_{\lambda}||<1/k\}$.
 
\begin{align}\label{semi-convex solutions: equation 3.9 of MM}
    u_{\lambda, p_{k}}= u_{\lambda}(x)+ p_{k} \cdot x \psi_{k}(x), \  v_{q_{k}}(x)= v(x) + q_{k} \cdot x\psi_{k}(x), \ \text{with}\ |p_{k}|+|q_{k}|\leq 1/k,
\end{align}
so that for any large $k\in \mathbb{N}$ large enough, there exists
\begin{align}\label{semiconvex solutions: 3.10 of MM}
    z_{\lambda, k} \in \argmax (u_{\lambda, p_k}- v_{q
    _k}) \ \text{with}\ |z_{\lambda, k}-x_{\lambda}|< 1/k.
\end{align}
($p_k\cdot x$ and $q_k\cdot x$ represents the usual Euclidean dot product) so that $u_{\lambda}$ and $v$ are twice differentiable at $z_{\lambda,k}$. Let us assume that $k$ such that $k> 4/\delta$ so that $z_{\lambda, k}\in U$. From \eqref{eq 3.7 from MM} and \eqref{semi-convex solutions: equation 3.9 of MM} and the fact that $z_{\lambda,k}\in U$ we get 
\begin{align*}
    \max_{x\in U}(u_{\lambda, p_{k}}- v_{q_{k}})&=  \max_{x\in \Omega}(u_{\lambda, p_{k}}- v_{q_{k}})\\&\geq \max_{x\in \Omega} (u_{\lambda}-v)- \left(\|u_{\lambda}-u_{\lambda, p_k}\|_{L^{\infty}}+\|v-v_{q_{q_k}}\|_{L^{\infty}}\right)\\
    &\geq \max_{x\in \Omega} (u_-v)- \|u_{\lambda}-u\|_{L^{\infty}}-\left(\|u_{\lambda}-u_{\lambda, p_k}\|_{L^{\infty}}+\|v-v_{q_{q_k}}\|_{L^{\infty}}\right)\\
    &\geq M_{0}-4\lambda\|u\|_{L^{\infty}}^2- \sup_{x\in \Omega} |x|/k,
\end{align*}
whenever $0<\lambda< M_0/ 8\|u\|_{L^{\infty}}^2$ and $k> 2\sup_{x\in \Omega} |x|/M_0$. The boundary behavior remains the same as $u_{\lambda, p_k}$ and $v_{q_k}$ agrees with $u_{\lambda}$ and $v_{\lambda}$ respectively near the boundary $\partial\Omega$ by definition.  

Now, the interior maximality of $z_{\lambda,k}$ implies that $\nabla u_{\lambda,p_k}(z_{\lambda,k})=\nabla_v{q_k}(z_{\lambda,k})$ and $D^2u_{\lambda,p_k}(z_{\lambda,k})\leq D^2 v_{q_{k}}(z_{\lambda, k})$ which together with leads to 
\begin{align}\label{semiconvex solutions: lemma 3.4: eq 3.11 of MM}
 \mathfrak{X}u_{\lambda, p_k} = \mathfrak{X}v_{q_k}(z_{\lambda, k})=:\xi_{\lambda,k}\ \text{and}\    \mathfrak{X}\mathfrak{X}^*u_{\lambda, p_{k}} (z_{\lambda, k}) \leq \mathfrak{X}\mathfrak{X}^*v_{q_k}(z_{\lambda,k}).
\end{align}
Since $u_{\lambda}$ and $v$ are twice differentiable at $z_{\lambda,k}$, we also have the differentiability of $u$ at $z_{\lambda,k}$. So, $\mathcal{L}u(z_{\lambda,k})\leq 0 \leq \mathcal{L}v(z_{\lambda,k})$. Furthermore, since $z_{\lambda, k}\to x_{\lambda}$ as $k\to \infty$ we have $\mathfrak{X}u(z_{\lambda,k})\to \mathfrak{X}u(x_{\lambda})$ as $k\to \infty$ from partial continuity of the gradient. Therefore, from continuity of $\xi \mapsto  A(\xi)$ and \eqref{semiconvex solutions: 3.10 of MM}, we can regard
\begin{align}\label{semiconevx solutions: 3.12}
    \left\| \mathcal{E}(\mathfrak{X}u(z_{\lambda,k}))- \mathcal{E}(\mathfrak{X}u(x_{\lambda}))\right\|\leq c \omega_{\lambda}(1/k)
\end{align}
for a sub-additive modulus $\omega_{\lambda}: [0,\infty)\to [0,\infty)$ with $\omega_{\lambda}(1/k)\to 0^+$ uniformly as $k\to \infty$ and constant $c=c(n,\|A\|_{L^{\infty}}, \Omega)>0$. Hence, using Lemma \ref{semiconvex solutions: lemma 3.2 of MM}
, \eqref{semiconvex solutions: 3.8 of MM} and \eqref{semiconevx solutions: 3.12} we get 
\begin{align}\label{semiconvex solutions: lemma 3.4: eq 3.13 of MM}
    \mathcal{L}u_{\lambda}(z_{\lambda, k})& =\frac{1}{\phi(1/h_{\lambda}'(u))} \left[ \mathcal{L} u(z_{\lambda,k}) -\frac{h_{\lambda''(u)}}{h_{\lambda'(u)}} \mathcal{E}(\mathfrak{X}u(z_{\lambda, k}))\right]\nonumber\\
    &\leq \frac{-h_{\lambda''(u)}\mathcal{E}(\mathfrak{X}u(z_{\lambda, k}))}{h_{\lambda'(u)} \phi(1/h_{\lambda}'(u))} \leq \frac{- h_{\lambda}''(u)\left[\mathcal{E}(\mathfrak{X}u(x_{\lambda}))-c \omega_{\lambda}(1/k)\right]}{ h_{\lambda}'(u) \phi(1/h_{\lambda}'(u))}\nonumber\\
    &\leq \frac{-h_{\lambda}''(u) [e_{\theta}(\lambda)-c\omega_{\lambda}(1/k)]}{h_{\lambda}'(u)\phi(1/h_{\lambda}'(u))}
 \leq \frac{-h_{\lambda}''(u) e_{\theta}(\lambda)/2}{h_{\lambda}'(u) \phi(1/h_{\lambda}'(u))}=: -\gamma_{\theta}()\lambda<0;
 \end{align}
where the last inequalities are ensured for large $k
\in \NN$ as given any $0< \lambda< \lambda_{0}$ fixed, their exists $k_{0}(\lambda)\in \NN$ such that $\omega_{\lambda}(1/k)< e_{\theta}(\lambda)/2c$ for all $k\geq k_{0}(\lambda)$. From semi-convexity of $u$ and semi-concavity of $v$ near $z_{\lambda, k}$, we can conclude that 
\begin{align*}
    -\Lambda \mathbb{I} \leq D^2u_{\lambda} (z_{\lambda,k}) \leq D^2v(z_{\lambda, k})\leq \Lambda \mathbb{I}. 
\end{align*}
Hence, from Lemma \ref{semiconvex-solutions: lemma 3.3 of MM}, \eqref{Semi convex solutions: Lemma 3.4 from MM}, \eqref{semi-convex solutions: equation 3.9 of MM} and \eqref{semiconvex solutions: 3.10 of MM} we get 
\begin{align}\label{semiconvex solutions: lemma 3.4:bounding by modulus of continuity}
    \max\{|\mathcal{L}v_{q_k}(z_{\lambda, k})-\mathcal{L}v(z_{\lambda, k})|, |\mathcal{L}u_{\lambda,p_k}(z_{\lambda, k})-\mathcal{L}u_{\lambda}(z_{\lambda, k})|\}\leq c\omega(1/k),
\end{align}
where $c= c(n,\|\sigma\|_{L^{\infty}}, \|D\sigma\|_{L^{\infty}}, \Lambda, \|A\|_{L^{\infty}}+\|H\|_{L^{
\infty}})$>0 and $\omega$ is a sub-additive modulus of continuity. For a fixed $0<\lambda<\lambda_0$ we can pick $k_1(\lambda)\in \NN$ such that $\omega(1/k)<\gamma_{\theta}(\lambda)/4c$ for all $k\geq k_1(\lambda)$. Now, using \eqref{semiconvex solutions: lemma 3.4:bounding by modulus of continuity}, \eqref{semiconvex solutions: lemma 3.4: eq 3.11 of MM}, \eqref{semiconvex solutions: lemma 3.4: eq 3.13 of MM} we get 
\begin{align*}
    0&\leq \Tr\left( A(\xi_{\lambda, k})(\mathfrak{X}\mathfrak{X}^*v_{q_k}(z_{\lambda, k})- \mathfrak{X}\mathfrak{X}^*u_{\lambda, p_k}(z_{\lambda, k}))\right)\\
    &= \Tr\left( A(\mathfrak{X}u_{\lambda, p_k}(z_{\lambda, k}))\right)- \Tr\left( A(\mathfrak{X}u_{\lambda, p_k}(z_{\lambda, k}))\right)\\
    &= -\mathcal{L} v_{q_k} (z_{\lambda, k})+\mathcal{L}u_{\lambda, p_k}(z_{\lambda, k})\leq -\mathcal{L}v(z_{\lambda, k})+ \mathcal{L} u_{\lambda}(z_{\lambda, k})+2 c \omega(1/k)\\
    &\leq -\gamma_{\theta}(\lambda)+ 2c\omega(1/k)\leq -\gamma_{\theta}(\lambda)/2<0.
\end{align*}
Hence, we have a contradiction. In this proof, we used the non-vanoishing of $\mathfrak{X}u$ at the maximal points of $u-v$. In the case where we have the non-vanishing of $\mathfrak{X}v$ at the maximal points of $u-v$ the argument is similar. In this case, too, we can derive an inequality lile \eqref{semiconvex solutions: lemma 3.4: eq 3.13 of MM} with $u_{\lambda}$ replaced by $v_{\lambda}:= v-\lambda(v-v_0)^2$ with $v_0=\inf_{\Omega}v$ and $0<\lambda<1/4\|v\|_{L^{\infty}}$ small enough. 
\end{proof}

 Now, we will remove the assumption that $\mathfrak{X}u$ and $\mathfrak{X}v$ doesn't vanish at the maximal points of $u-v$ unlike in Lemma \ref{Semi convex solutions: Lemma 3.4 from MM}. We will have to use the full strength of maximal subellipticity. First let us define the following domain. 

\begin{align}\label{semi-convex solutions: equation 3.14 of MM}
    \Omega_{\delta}:=\{ x\in \Omega: \rho(x,\partial \Omega)>\delta\}\ \forall \ \delta>0.
\end{align}

    In the next Proposition we will prove the main comparison theorem for semi-convex and semi-concave functions. The skeleton of the proof remains the same as \cite[Proposition 3.5]{MM} and hence we will try to maintain most of the notations and proof structure from there. However, the fact that we are not working in a group and have to deal with exponential map will make the arguments different towards the end of the proof. 

\begin{prop}\label{Proposition 3.5 of MM}
    Let $\mathcal{L}$ be as in \eqref{Semi-convex solutions: equation 3.1 of MM} and $u,v \in C(\bar{\Omega})$ be respectively semi-convex and semi-concave such that $u\leq v$ in $\partial \Omega$ and $\mathcal{L}u\leq 0\leq \mathcal{L}v$ in $\Omega$ at the maximal points of $u-v$ in viscosity sense, then $u\leq v$ in $\Omega$.  
\end{prop}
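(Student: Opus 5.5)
We argue by contradiction, following the skeleton of \cite[Proposition 3.5]{MM}. Suppose $M_0:=\max_{\bar\Omega}(u-v)>0$; after adding a small constant we may assume $u-v\le -\epsilon<0$ on $\partial\Omega$, so $\argmax(u-v)$ is a compact subset of some $\Omega_{\delta}$ as in \eqref{semi-convex solutions: equation 3.14 of MM}. If $\XX u$ or $\XX v$ is nonzero at every maximal point of $u-v$, Lemma \ref{Semi convex solutions: Lemma 3.4 from MM} already gives the contradiction. So the task is to reduce to that case: we must rule out, or perturb away, maximal points $x_0$ with $\XX u(x_0)=\XX v(x_0)=0$. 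These two gradients agree at such points, since both functions are differentiable there with equal Euclidean gradients.

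\textbf{Step 1: degenerate maxima and the strong maximum principle.} Let $Z:=\{x\in\argmax(u-v): \XX u(x)=0\}$. First suppose $u-v\equiv M_0$ in a neighbourhood of some point of $Z$. We then propagate this using Corollary \ref{Background: Corolllary 2.3 of MM}, applied to $u-M_0$ against the supersolution $v$ in the form used in \cite{MM}. On the connected component of $\{u-v=M_0\}$ this would force the constancy to reach $\partial\Omega$, contradicting $u-v\le-\epsilon$ there. Hence we may assume that near every $x_0\in Z$ the maximum is not locally flat.

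\textbf{Step 2: translate by the exponential map.} For small $h\in\RR^m$ set
\[
u^h(x):=u(\exp_x(h\cdot\XX)),
\]
and compare $u^h$ with $v$ on $\Omega_\delta$. By Proposition \ref{approximating by convex solutions}(3), $u^h$ is a viscosity subsolution, at the relevant points, of a perturbed operator $\mathcal L^{h}$. This operator is built by pulling back along $\Theta_{j,h}$, so it satisfies \eqref{introduction: equation 1.6 of MM} and \eqref{introduction: equation 1.7 of MM}, and $\mathcal L^h\to\mathcal L$ locally uniformly on $C^2$ functions. We must also check that $u^h$ stays semi-convex and is Lipschitz in $h$ uniformly. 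For this we use a Gr\"onwall bound on the flow $s\mapsto\exp_x(sh\cdot\XX)$, which gives
\[
|u^h(x)-u(x)|\lesssim \|\nabla u\|_{L^\infty}|h|.
\]
Lemma \ref{Background: NSW lemma} converts this into control in the metric $d$. The point of the translation is that $\XX u^h(x_h)\approx \XX u(\exp_{x_h}(h\cdot\XX))$. We want $h$ chosen so that the maximizers $x_h$ of $u^h-v$ avoid $\{\XX u=0\}$, or at least avoid points where both gradients vanish.

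\textbf{Step 3: choosing $h$ (the main obstacle).} The map $h\mapsto \max(u^h-v)$ is Lipschitz in $h$, so by Lemma \ref{Cheeger's result on a.e differentiability} (or classical Rademacher in $h$) it is differentiable for a.e. $h$. At a point of differentiability, a first-variation argument in $h$ should show that at any maximizer $x_h$ the derivative equals $\XX u$ evaluated at the shifted point, up to the Jacobian of the flow. Combined with Step 1, we aim to find $h$ arbitrarily small such that $\XX u^h(x_h)\neq0$ at every maximizer, and such that $\max(u^h-v)>0$ still holds with boundary values negative.

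This is where non-left-invariance bites. The derivative of $u\circ\exp_x(h\cdot\XX)$ in $h$ is not exactly $\XX u$, and the perturbed operators $\mathcal L^h$ must be matched at the actual maximal points. To handle this, we first fix the finite set of maximizers and construct $\mathcal L^h$ via Proposition \ref{approximating by convex solutions}(3) at those points. If no such $h$ exists, we instead argue that the flat case of Step 1 must occur.

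\textbf{Step 4: conclusion.} Once $\XX u^h\neq0$ at all maximizers, we apply Lemma \ref{Semi convex solutions: Lemma 3.4 from MM} to $u^h$, $v$ and the operator $\mathcal L^h$; its proof is purely local at the maxima, which is all we need. This yields $u^h\le v$, and letting $h\to0$ gives $u\le v$, contradicting $M_0>0$.
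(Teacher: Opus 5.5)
There is a genuine gap: the pair you feed into Lemma \ref{Semi convex solutions: Lemma 3.4 from MM} in Step 4 is not a sub/supersolution pair for a single operator. You apply the lemma to $u^h$, $v$ and $\mathcal L^h$, but $v$ is only a supersolution of $\mathcal L$, not of $\mathcal L^h$. The lemma compares a sub- and a supersolution of the \emph{same} operator. Its proof uses $\XX u_{\lambda,p_k}(z)=\XX v_{q_k}(z)$ and $\Tr(A(\xi)(\XX^*\XX v-\XX^*\XX u))=\mathcal L u-\mathcal L v$ at the touching point, and both fail when the two functions are tested in different frames. The discrepancy between $\mathcal L^h$ and $\mathcal L$ has a fixed size once $h$ is fixed. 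It cannot be absorbed into the strictness $\gamma_\theta(\lambda)$, because $\theta$ is only known to be positive for that particular $h$ and may degenerate as $h\to0$.

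This is why the paper shifts both functions and works with the two-parameter quantity $M_\delta(h,l)=\max_{\Omega_\delta}(u_h-v_l)$. In Case 2 (for every $l$ there is $h_l$ with $\XX u(\exp_x(h_l\cdot\XX))\neq0$ on all of $\mathcal A_\delta(h_l,l)$), it iterates $l_{j+1}=h_{l_j}$ and extracts $l_j\to h$. It then uses the approximate maximizers $x_j=\exp_{\exp_x(h\cdot\XX)}(-l_j\cdot\XX)$, the relabelling constants $c_{1,j},c_{2,j}$, Lemma \ref{Background: NSW lemma} and partial continuity of the gradient. This produces one common $h$ with $\XX u(\exp_x(h\cdot\XX))\neq0$ for all $x\in\mathcal A_\delta(h,h)$. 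Only then are $u_h$ and $v_h$ sub/supersolutions of the same perturbed operator $\mathcal L^h$. Your one-parameter scheme, comparing $u^h$ with an unshifted $v$, has no substitute for this diagonal step.

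The degenerate alternative is also not handled correctly. Corollary \ref{Background: Corolllary 2.3 of MM} concerns a single subsolution with a nonnegative interior maximum, or a single supersolution with a nonpositive minimum. The difference $u-v$ is neither, so the corollary cannot be "applied to $u-M_0$ against $v$". Local flatness of $u-v$ near a point where $\XX u=0$ gives no local maximum of $u$ itself. Your link "if no such $h$ exists, the flat case must occur" is also asserted without argument.

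In the paper the degenerate alternative is Case 1: there is $l_0$ such that for every small $h$ some $x_h\in\mathcal A_{\delta_0}(h,l_0)$ has $\XX u(\exp_{x_h}(h\cdot\XX))=0$. The argument then runs as follows.
\begin{enumerate}
\item The BCH/Taylor expansion gives $M_{\delta_0}(h,l_0)\ge M_{\delta_0}(h',l_0)+o(\|h-h'\|)$.
\item Hence the Lipschitz map $h\mapsto M_{\delta_0}(h,l_0)$ has vanishing derivative a.e. by Lemma \ref{Cheeger's result on a.e differentiability}, and is therefore constant.
\item Constancy gives $u(\tilde x_0)\ge u(\exp_{\tilde x_0}(h\cdot\XX))$ for $\tilde x_0\in\mathcal A_\delta(0,l_0)$, a genuine local maximum of $u$. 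Similarly $v$ has a local minimum at $\exp_{\tilde x_0}(l_0\cdot\XX)$.
\item The strong maximum principle is then applied to $u$ and to $v$ separately, and connectedness forces $u-v$ to be constant on $\Omega$, contradicting the boundary inequality.
\end{enumerate}
You gesture at the Rademacher/first-variation ingredient in Step 3, but you leave the first-variation identity unresolved; you note yourself that the $h$-derivative of $u(\exp_x(h\cdot\XX))$ is not $\XX u$. You also direct its conclusion to the wrong target, flatness of $u-v$, instead of a local extremum of $u$ and of $v$ individually.
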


\begin{proof}
 We proceed by a proof by contradiction. Assume the contrary, i.e. $\max_{\Omega}(u-v)>0$ and since $u\leq v$ in $\partial \Omega$, the maxima are attained in the interior of $\Omega$. Thus, we have 
 \begin{align*}
     u(x_0)-v(x_0)=\max_{x\in \Omega}\{u(x)-v(x)\}=M_0>0,
 \end{align*}
 for an interior point $x_0\in \Omega$. For any $\delta\geq 0$ and $h,l\in \RR^n$ with $\|h\|,\|l\| < \delta$, let us denote the translations $u_h, v_l:\Omega_{\delta}\to \RR$ by 
 \begin{align*}
     u_{h}(x):= u(\exp_{x}(h\cdot \XX)), \ v_{l}(x):= u(\exp_{x}(l\cdot \XX))
 \end{align*}
 for $x\in \Omega_{\delta}$ and
 \begin{align}\label{semi-convex solutions: 3.15 of MM}
     M_{\delta}(h,l)&=\max_{x\in \Omega_{\delta}}\{u_{h}(x)-v_{l}(x)\}\ \nonumber\\ \calA_{\delta}(h,l)&=\{x\in \Omega_{\delta}: u_{h}(x)-v_{l}(x)= M_{\delta}(h,l)\}.
 \end{align}
 It is easy to see that $M_{0}(0,0)=M_0$ and $x_0\in A_{0}(0,0)$. Since $x_0\in \Omega_{\delta}$ we have $M_{\delta}(0,0)=M_0>0$ for any $0<\delta<\rho(x_0, \Omega_\delta)$. Hence, the maxima are in the interior of $\Omega_{\delta}$ and therefore for all $0<\delta'< \delta$ since $\Omega_{\delta}\subset \Omega_{\delta'}$. Also, $M_{\delta}(0,0)=M_{\delta'}(0,0)=M_0>0$ since $x_0$ is in the interior of $M_{\delta'}$. Also, for some $h,l\in B_{\delta}(0)$ if $\calA_{\delta}(h, 0)\neq\emptyset\neq  \calA_{\delta}(0,l)$ then the corresponding maxima are in the interior of $\Omega_{\delta}$ and therefore $M_{\delta}(h,0)= M_{\delta'}(h,0)$ and $M_{\delta}(0,l)=M_{\delta'}(0,l)$ for all $0<\delta<\delta'$. Let us denote
 \begin{align}\label{semi-convex solutions: eq 3.16 of MM}
     \mathcal{A}= \bigcup_{\delta>0}\left(\bigcup_{h,l\in B_{\delta}(0)} \mathcal{A}_{\delta}(h,l)\right),
 \end{align}
 is contained in a compact subset since $\Omega$ is bounded and from \eqref{semi-convex solutions: equation 3.14 of MM}, $\Omega_{\delta}\neq \emptyset$ for $0\leq \delta\leq \text{diam}(\Omega)$. Without loss of generality assume that 
 \begin{align}\label{semi-convex solutions: equation 3.17 of MM}
     u(z)-v(z)\leq -\tau<0, \ \forall \ z\in \partial \Omega,
 \end{align}
 for any arbitrarily small $\tau>u$ by addition of an appropriate fixed constant to $u$ and $v$. We will also make an explicit choice of constants $c_1, c_2$ later on such that 
 \begin{align}\label{Semi convex/concave solutions: adding constants}
     u\mapsto \tilde{u}:=u+c_1\ \text{and}\ v\mapsto\tilde{v}:= v+c_2.
 \end{align}
It is important to note that as long as we pick constants such that $\tilde{u}<\tilde{v}$ on $\partial\Omega$ holds the rest of the arguments will also holds since the semi convexity/concavity, gradients and maximal sets are invariant under such relabeling as $\XX\tilde{u}=\XX\tilde{v}$ and $\argmax(\tilde{u}_h-\tilde{v}_h)= \argmax(u_h-v_h)$.

 We study the behavior of propagation of the maximal with respect to the translations now. From semi-convexity of $u$ and $-v$, we know that they are locally Lipschitz and $\|\mathfrak{X}u\|_{L^{\infty}}\leq c\|u\|_{L^{\infty}}$ and $\|\mathfrak{X}v\|_{L^{\infty}}\leq c\|v\|_{L^{\infty}}$ for some $c=c(n,\|\sigma\|_{L^{\infty}}, \text{diam}(\Omega))>0$ in compact subsets of $\Omega$. Note that $h\mapsto M_{\delta}(h,l)$ is Lipschitz, since for $x\in \mathcal{A}_{\delta}(h,l)$ and $x'\in \mathcal{A}_{\delta}(h',l)$,
 \begin{align}\label{semi-convex solution: equation 3.19 A}
     M_{\delta}(h,l)-M_{\delta}(h',l)=& u(\exp_{x}(h\cdot \XX))-v(\exp_{x}(l\cdot \XX))- u(\exp_{x'}(h'\cdot\XX))\nonumber\\&+v(\exp_{x'}(l\cdot \XX)) \nonumber\\ 
      \leq& u(\exp_{x}(h\cdot \XX))-v(\exp_{x}(l\cdot \XX))- u(\exp_{x}(h'\cdot\XX))\nonumber\\&+v(\exp_{x}(l\cdot \XX))\nonumber\\
     =&u(\exp_{x}(h\cdot \XX))-u(\exp_{x}(h'\XX)),
 \end{align}
 we used the maximality at $x'$ to get $u(\exp_{x}(h'\cdot \XX))\leq u(\exp_{x'}(h'\cdot \XX))$. To bound the the above quantity we will use Gr\"onwall type bound. Set $h_s:= h'+s(h-h')$ and $\gamma(s):=\exp_{x}(h_s\cdot \XX)$. Then, 
 \begin{align}\label{Setting up Gronwall}
     \frac{d}{ds} u(\gamma(s))=\sum_{i}(h_i-h_i')X_{i}u(\gamma(s)),
 \end{align}
 so 
 \begin{align*}
     |u(\exp_{x}(h\cdot \XX))-u(\exp_{x}(h'\cdot \XX))| \leq \int_{0}^{1} \|h-h'\| |\XX u(\gamma(s))|\ ds\leq \|h-h'\|\|\XX u\|_{L^{\infty}(\Omega_{\delta})}, 
 \end{align*}
 as for small $\|h\|, \|h'\|$ $\gamma(s)\subset \Omega_{\delta}$. Hence, we get the RHS of \eqref{semi-convex solution: equation 3.19 A} is bounded by $C \|h-h'\|\|\XX u\|_{L^{\infty}}$, and hence
 \begin{align}\label{semi-convex solutions: equation 3.19 of MM}
       M_{\delta}(h,l)-M_{\delta}(h',l)\leq C \|h-h'\|\|\XX u\|_{L^{\infty}}, 
 \end{align}
where   the constant $C$ independent of $h, h'$. A symmetric inequality similar to \eqref{semi-convex solutions: equation 3.19 of MM} can be obtained using maximality in at $x$ provides the other direction and thereby the Lipschitz bound
 \begin{align}\label{semi-convex solutions: equation 3.20 of MM}
     |M_{\delta}(h,l) -M_{\delta}(h,l')|\leq \|h-h'\|\|\mathfrak{X}v\|_{L^{\infty}}.
 \end{align}
 Similarly, $l\mapsto M_{\delta}(h,l)$ is also a Lipschitz function and by arguing similarly as \eqref{semi-convex solutions: equation 3.19 of MM} using maximality in $\mathcal{A}_{\delta}(h,l), \mathcal{A}_{\delta}(h,l')$ and differentiability at maximal points, we can obtain
 \begin{align}\label{semi-convex solutions: equation 3.21 of MM}
     |M_{\delta}(h,l)-M_{\delta}(h,l')|\leq \|l-l'\|\|\mathfrak{X}v\|_{L^{\infty}}.
 \end{align}
 Now, we again proceed to consider two cases as in \cite[Proposition 3.5]{MM}, which is one of the main novel ideas in \cite{MM} to adapt the proof of Barles-Busca \cite{BB} and \cite{ACJ}.

 \textbf{Case 1:}\label{prop::Comparison principle for semi convex/concave solutions:: case 1}  There exists $0<\delta_0\leq 1/4\min\{ \rho(x_0, \partial \Omega), M_0/\|\mathfrak{X}v\|_{L^{\infty}}\}$ and $l_0\in \RR^n, \|l_0\|\leq \delta$, such that for all $h\in \RR^m$ with $\|h\|<\delta_0$, there exists $x_h\in \mathcal{A}_{\delta_0}(h,l_0)$ such that we have $\mathfrak{X}u(\exp_{x_h}(h\cdot \mathfrak{X}))=0$.

From the differentiability at maximal points $\mathfrak{X}u(\exp_{x_h}(h\cdot \XX))$ is well-defined. For $0<\delta_0<M_0/2\|\mathfrak{X}v\|_{L^{\infty}}$ and $\|l_0\|\leq \delta_0$, using \eqref{semi-convex solutions: equation 3.21 of MM} we have
\begin{align}\label{semi-convex solutions: equation 3.22 of MM}
    M_{\delta_0}(0,l_0)\geq M_0- \|l_0\|\|\mathfrak{X}v\|_{L^{\infty}}\geq M_0/2>0.
\end{align}
Now, consider the Taylor series formula 
\begin{align}\label{semi-convex solutions: Taylor series}
    u(\exp_{x_h}(h\cdot \XX))= u(x_h)+ (h\cdot \XX)u (x) +o (\|h\|).
\end{align}
Also, using BCH formula
\begin{align*}
    \exp_{x}(h\cdot \XX) =\exp_{\exp_{x}(h'\cdot \XX)} \left((h-h')\cdot\XX+\frac{1}{2} \sum_{i,j} h_j'(h_i-h_i')[X_j, X_i]+ R(h, h')\right),
\end{align*}
where the remainder $R(h,h')$ is a linear combination of higher order commutators with coefficients $O(|h-h'|(|h|+|h'|)+ |h-h'|^2)$. So, evaluating $u$ along this composed flow and Taylor exapnding in the flow time yields
\begin{align}\label{setting up for big O bound}
    u(\exp_{x})(h\cdot \XX)-u(\exp_{x}(h'\cdot \XX))= (h-h')\cdot \XX u(\xi) +O(|h-h'|(|h|+|h'|)) \|\XX u\|_{C^{0}(\Omega)}
\end{align}
for some $\xi$ on the trajectory connecting $\exp_{x}(h\cdot \XX)$ and $\exp_{x}(h'\cdot \XX)$ inside $\Omega$. For $|h|, |h'|$ small, the second term is absorbed into $X\|h-h'\|$, so again youg get $O(\|h-h'\|)$. 

Now, using the assumption we made in Case 1, and the maximality at $x_h\in \mathcal{A}_{\delta_0}(h,l_0)$ and $x_{h'}\in \mathcal{A}_{\delta_0}(h', l_0)$, together with differentiability at maximal points and \eqref{semi-convex solutions: Taylor series} we get

\begin{align*}
u(\exp_{x_h}(h\cdot \XX)) - &v(\exp_{x_h}(l_0\cdot \XX))\\&\geq  u(\exp_{x_{h'}}(h\cdot \XX)) - v(\exp_{x_{h'}}(l_0\cdot \XX))\\
    &= u(\exp_{x_{h'}}((h'+ h-h')\cdot \XX)) - v(\exp_{x_{h'}}(l_0\cdot \XX))\\
    &= u(\exp_{x_{h'}}(h'\cdot \XX))-v(\exp_{x_{h'}}(l_0\cdot \XX)) +\\& O(\|h-h'\|)\cdot(\XX u)(\xi)+ o(\|h-h'\|), 
\end{align*}
for any $h,h'\in B_{\delta_0}(0)$, and $\xi$ is as in \eqref{setting up for big O bound}. From \eqref{semi-convex solutions: 3.15 of MM} and the above we get
\begin{align*}
     M_{\delta_0}(h,l_0)\geq M_{\delta_0}(h',l_0)+o (\|h-h'\|).
\end{align*}

Since the inequality is symmetric with respect to $h$ and $h'$, we conclude that at points of differentiability of the function $h\mapsto M_{\delta_0}(h,l_0)$, we have $\mathfrak{X}M_{\delta_0}(h,l_0)=0$. \eqref{semi-convex solutions: equation 3.20 of MM} tells us that $h\mapsto M_{\delta_0}(h,l_0)$ is Lipschitz. For a Rademacher type theorem theorem for a real-valued function from a general doubling metric measure space, we refer the reader to \cite{CJ} ; also see \cite{PP} where the they prove Rademacher theorem for Carnot groups. 

Using Lemma \ref{Cheeger's result on a.e differentiability} we have 
\begin{align*}
    \mathfrak{X}M_{\delta_0}(h,l_0)=0, \quad \forall \ h\in B_{\delta}(0)\ a.e.
\end{align*}
and hence, the Lipschitz constant constant of $h\mapsto M_{\delta_0}(h,l_0)$ is zero, and hence the function $h\mapsto M_{\delta_0}(h,l_0)$ is constant in $B_{\delta_0}(0)$. Thus, we have 
\begin{align*}
    M_{\delta_0}(h,l_0)= M_{\delta_0}(0,l_0), \quad \forall \|h\|<\delta_0.
\end{align*}
Hence, for any $\tilde{x}_0\in \mathcal{A}_{\delta}(0,l_0)$ and $\|h\|<\delta<\delta_0<\rho(x_0, \partial \Omega)$, using the above, \eqref{semi-convex solutions: 3.15 of MM} and interior maximality at $\tilde{x}_0$ we have 
\begin{align*}
    u(\tilde{x}_0)- v(\exp_{\tilde{x_0}}(l_0\cdot \XX))&= M_{\delta}(0,l_0)=M_{\delta_0}(h,l_0)\\
    & =u(\exp_{x_h}(h\cdot \XX))- v(\exp_{x_h}(l_0\cdot \XX))\\
    &\geq u(\exp_{\tilde{x}_0}(h\cdot \XX))- v (\exp_{\tilde{x}_0}(l_0\cdot \XX)),
\end{align*}
leading to $u(\tilde{x}_0)\geq u(\exp_{\tilde{x}_0}(h\cdot \XX))$. Thus, we have a sub-solution $u$ with a local maximum at $\tilde{x}_0\in \Omega$, which can be converted to a non-negative maximum by adding a large enough positive constant to $u$. From Corollary \ref{Background: Corolllary 2.3 of MM}, $u(x)= u(\tilde{x}_0)$ for all $x\in B_{\delta}(\tilde{x}_0)$. Also, for all $\|h'\|<\delta$, the maximality at $\tilde{x}_0\in A_{\delta}(0,l_0)$ implies 
\begin{align}\label{semi-convex solutions: maximality at tilde{x}}
    u(\tilde{x}_0)-v(\exp_{\tilde{x}_0}(l_0\cdot \XX))&\geq u(\exp_{\tilde{x}_0}(h'\cdot \XX))-v(\exp(l_0\cdot \XX)\circ\exp(h'\cdot \XX)(\tilde{x}_0))\nonumber\\
    &= u(\tilde{x}_0)-v(\exp(l_0\cdot \XX)\circ\exp(h'\cdot \XX)(\tilde{x}_0)),
\end{align}
where $\exp(l_0\cdot \XX)\circ\exp(h'\cdot \XX)(\tilde{x}_0)$ is the composition of two exponential maps applied to $\tilde{x}_0$. When we compose more than one exponential maps we will use this notation from now on. From \eqref{semi-convex solutions: maximality at tilde{x}} we get $v(\exp(l_0\cdot \XX)\circ\exp(h'\cdot \XX)(\tilde{x}_0)) \geq v(\exp_{\tilde{x_0}}(l_0\cdot \XX))$, which also means that the super-solutions $v$ has a local minimum at $\exp_{\tilde{x}_0}(l_0\cdot \XX)$. By adding a negative constant and converting the super-solution $v$ to have a non-positive minimum at $\exp_{\tilde{x}_0}(l_0\cdot \XX)\in \Omega$, we can use Corollary \ref{Background: Corolllary 2.3 of MM} again to conclude that $v(x)= v(\exp_{\tilde{x}_0}(l_0\cdot \XX))$ in a neighborhood of $\exp_{\tilde{x}_0}(l_0\cdot \XX)$. Hence, $\{x\in \Omega: u(x)-v(x)=u(\tilde{x}_0)- v(\exp_{\tilde{x}_0}(l_0\cdot \XX))\}$ being both open and closed  and $\Omega$ being connected we get that it is the entire $\Omega$. Thus, for every $x\in\Omega$, we have $u(x)-v(x)- u(\tilde{x}_0)- v(\exp_{\tilde{x}_0}(l_0\cdot \XX))=M_{\delta_0}(0,l_0)$ from \eqref{semi-convex solutions: equation 3.22 of MM}, which contradicts $u\leq v$ in $\partial \Omega$.

\textbf{Case: 2} For any $0<\delta<\frac{1}{4}\min\{ \text{dist}(x_0, \partial\Omega), M_0/\|\XX v\|_{L^{\infty}}\}$ and any $l\in \RR^m$ with $\|l\|\leq \delta$ there exists $h_l\in \RR^m$ with $\|h_{l}\|<\delta$, such that for all $x\in\mathcal{A}_{\delta}(h_l,l)$ we have $(\XX u)(\exp_{x}(h_l\cdot \XX))\neq 0$. 

First, we prove the following claim. 

\textbf{Claim:} There exists $h\in \RR^m$ with $\|h\|<\delta$, such that for all $x\in \mathcal{A}_{\delta}(h,h)$ we have $\XX u ( \exp_{x}(h\cdot \XX))\neq 0$.

Notice that the set $\{\exp_{x}(h_l\cdot \XX): x\in \mathcal{A}_{\delta}(h_l,l)\}$, for any $l\in B_{\delta}(0)$ is contained in a compact set $K_{\delta}\subset \Omega_{\delta}$ which can be taken as the $\delta$-neighborhood of $\mathcal{A}$ as in \eqref{semi-convex solutions: eq 3.16 of MM}. Therefore, assuming case 2, we can regard 
\begin{align}\label{Semi convex/concave solutions: 3.23 of MM}
    |\XX u(\exp_{x}(h_{l}\XX ))|\geq \theta_{l}>0, \quad \forall x\in \mathcal{A}_{\delta}(h_l, l). 
\end{align}
Hence, let us take any $l_{0}\in B_{\delta}(0)$ and use the hypothesis of Case 2 repeatedly to define a sequence $l_{j+1}=h_{l_j}$ for every $j\in \NN\cup \{0\}$. Since $\{l_{j+1}\}$ is bounded, up to a sub-sequence we have $l_{j}\to h$ for some $h\in B_{\delta}(0)$ for some $h\in B_{\delta}(0)$ and hence $|l_j-h|, |l_{j+1}- l_j|\to 0^+$ as $j\to \infty$. We show that $h$ satisfies the claim. Indeed, as $\|h\|<\delta$, from \ref{semi-convex solutions: equation 3.20 of MM} and \ref{semi-convex solutions: equation 3.21 of MM}, we have
\begin{align*}
    M_{\delta}(h,h)\geq M_0-\|h\|(\|\XX u\|_{L^{\infty}}+ \|\XX v\|_{L^{\infty}}) \geq M_0/2,
\end{align*}
when $\delta< M_0/(4\|\XX u\|_{L^{\infty}})$. Now, for all $x\in \partial \Omega_{\delta}$, notice that 
\begin{align*}
    \dist(\exp_{x}(h\cdot \XX), \partial\Omega)\leq \dist(x, \partial \Omega)+ d(\exp_{x}(h\cdot\XX), x)\leq \delta+\|h\|<2\delta,   
\end{align*}
for $\|h\|<\delta$. Now, combining \ref{semi-convex solutions: equation 3.17 of MM} with the above inequality we get 
\begin{align}\label{3.24 of MM}
    u(\exp_{x}(h\cdot\XX))-v(\exp_{x}(h\cdot\XX))&\leq -\tau+\dist(\exp_{x}(h\cdot\XX), \partial\Omega) (\|\XX u\|_{L^{\infty}}+\|\XX v\|_{L^{\infty}})\nonumber\\
    &\leq -\tau +2\delta(\|\XX u\|_{L^{\infty}}+\|\XX v\|_{L^{\infty}})\leq 0,
\end{align}
if $\delta< \tau/(2\|\XX u\|_{L^{\infty}}+2\|\XX u\|_{L^{\infty}})$. Thus, we have $u_{h}\leq v_{h}$ on $\partial\Omega_{\delta}$, which implies the maxima at $M_{\delta}(h,h)>0$ is attained in the interior, hence $\mathcal{A}_{\delta}(h,h)\neq 0$. To prove the claim, we show that, given any $x\in \mathcal{A}_{\delta}(h,h)$ there exists $x_{j}'\in \mathcal{A}_{\delta}(h_{l_j}, l_{j})$ satisfying \ref{Semi convex/concave solutions: 3.23 of MM} which is close enough to $x$ for large $j$. Observe that by the definition of $\mathcal{A}_{\delta}(h,h)$, for any $\Omega'\subset\subset \Omega$ with $x\in \Omega'$, we have 
\begin{align*}
    u(\exp_{x}(h\cdot\XX))- v(\exp_{x}(h\cdot\XX))=\max_{\Omega'}(u_h-v_h).
\end{align*}
Since $\mathcal{A}_{\delta}(h,h)\neq 0$, we can see that $\mathcal{A}_{\delta}(l_j, l_j)$ is also nonempty for large enough $j$ because $l_{j}\to h$ as $j\to \infty$. Hence, assume that $\mathcal{A}_{\delta}(l_j, l_j)\neq \emptyset$ for $j\geq J_0$ for some $J\in \mathbb{N}$. For any $x\in \mathcal{A}_{\delta}(h,h)$ and $B\subset\subset \Omega$ with $x\in B$ let us denote $B_{j}:=\{\exp_{\exp_{y}(h\cdot \XX)}(-l_j \cdot \XX): y\in B\}$. Observe that given $x\in B$, $x\in B_{j}$ if $j\geq J$ and $|h-l_{j}|<\frac{1}{2}\dist(x, \partial B)$ and we have 
\begin{align}\label{3.25 of MM}
    M_{\delta}(l_j, l_j)&= \max_{y\in B}\{u(\exp_{y}(l_j\cdot \XX))- v(\exp_{y}(l_{j}\cdot \XX))\}\nonumber\\
    &=\max_{z\in B_j}\{u(\exp_{z}(h\cdot\XX))-v(\exp_{z}(h\cdot \XX))\}\nn\\
    &=u(\exp_{x}(h\cdot \XX))-v(\exp_{x}(h\cdot \XX))\nn\\
    &=u(\exp_{x_j}(l_j\cdot \XX))-v(\exp_{x_j}(l_j\cdot \XX)),
\end{align}
where $x_j:=\exp_{\exp_{x}(h\cdot \XX)}(-l_j\cdot\XX)$. Observe that $d(x_j, x)\leq |l_{j}-h|^{1/r}$ by Proposition \ref{Background: NSW lemma}. Also, by definition $x_j\in \mathcal{A}_{\delta}(l_j, l_j)$ and since $x\in B_j$ we have $x_j \in B$. 

We will now produce similarly a maximal point in $\mathcal{A}_{\delta}(h_{l_j}, l_j)$ close to $x$ by taking a choice of the relabeling as in \ref{Semi convex/concave solutions: adding constants}. Let 
\begin{align*}
    &c_{1,j}:= M_{\delta}(l_{j+1}, l_j)- M_{\delta}(l_{j+1, l_{j+1}}),\\
    &c_{2,j}:= v(\exp_{x_{j+1}}(l_{j+1}\cdot \XX))-v(\exp_{x_{j+1}}(l_{j}\cdot \XX)).
\end{align*}
Observe that the constants are invariant under relabelling by addtions of small constants to $u$ and $v$. Now, define
\begin{align}
    \tilde{u}:=u+c_{1,j}, \quad \tilde{v}:=v+c_{2, j}.
\end{align}
 Using \eqref{semi-convex solutions: equation 3.21 of MM}, \eqref{bound on cc metric after exp flow}, $|l_{j+1}-l_j|\leq |l_{j+1}-l_{j}|^{1/r}$ for $|l_{j+1}-l_{j}|\leq 1$, and the differentiability at maximal points we have 
\begin{align}\label{bound on constants c1j and c2j}
    |c_{1,j}|, |c_{2, j}|\leq C|l_{j+1}-l_{j}|^{1/r} \|\XX v\|_{L^{\infty}}
\end{align}
where $C$ does not depend on $l_{j+1}, l_j$ and $v$. Now, we can argue similar to \eqref{3.24 of MM} by using \eqref{bound on constants c1j and c2j} and \eqref{semi-convex solutions: equation 3.17 of MM} to get $\tilde{u}<\tilde{v}$ on $\partial \Omega$ for $j\geq J_1$ for some large enough $J_1\in \RR$. Hence we have made a relabelling that would agree with the previous arguments in the proof. 

Now, we can use \eqref{3.25 of MM} with $x_{j+1}=\exp_{\exp_{x}(h\cdot \XX)}(-l_{j+1}\cdot \XX)$ for $j\geq J_1$ to get 
\begin{align*}
    M_{\delta}(l_{j+1}, l_j)=&M_{\delta}(l_{j+1}, l_j)+c_{1, j}\\
    = &u\left(\exp_{x_{j+1}}(l_{j+1}\cdot \XX)\right)-v\left(\exp_{x_{j+1}}(l_{j+1}\cdot \XX)\right)+c_{1, j}\\
    =&u\left(\exp_{x_{j+1}}(l_{j+1}\cdot \XX)\right)-v\left(\exp_{x_{j+1}}(l_{j}\cdot \XX)\right)+\\
    &+u\left(\exp_{x_{j+1}}(l_{j}\cdot \XX)\right)-v\left(\exp_{x_{j+1}}(l_{j+1}\cdot \XX)\right)+c_{1, j}\\
    =& u\left(\exp_{x_{j+1}}(l_{j+1}\cdot \XX)\right)-v\left(\exp_{x_{j+1}}(l_{j}\cdot \XX)\right)+ c_{1, j}-c_{2, j}\\
    =&\tilde{u}\left(\exp_{x_{j+1}}(l_{j+1}\cdot \XX)\right)-\tilde{v}\left(\exp_{x_{j+1}}(l_{j}\cdot \XX)\right)
\end{align*}
Therefore, the relabelling $u\mapsto \tilde{u}$ and $v\mapsto \tilde{v}$ along with the above computation shows that
\begin{align}\label{xj+1 is in a maximal set}
    x_{j+1}\in \mathcal{A}_{\delta}(l_{j+1}, l_j), 
\end{align}
and \begin{align*}
    d(x_{j+1}, x_j)&=d\left(\exp_{\exp_x(h\cot \XX)}(-l_j\cdot \XX),\exp_{\exp_x(h\cot \XX)}(-l_{hj+1}\cdot \XX) \right)\\&\leq |l_{j+1}-l_j|^{1/r}.
\end{align*}
Therefore, for any $x\in \mathcal{A}_{\delta}(h, h)$, using partial continuity of gradient we can conclude that $\exists J_0=J_0(n, r, \delta, \|\XX u\|_{L^{\infty}})\in \NN$ large enough such that $\forall j\geq J_0$ we have 
\begin{align}\label{dominating by thetadelta}
    &\left|\XX u\left(\exp_{x}(h\cdot \XX)\right)-\XX u\left(\exp_{x_{j+1}}(h_{l_j})\cdot \XX\right)\right|\nonumber\\
    \leq &\left|\XX u\left(\exp_{x}(h\cdot \XX)\right)-\XX u\left(\exp_{x}(h_{l_j}\cdot \XX)\right)\right|+\nonumber\\
    &+\left|\XX u\left(\exp_{x}(h_{l_j}\cdot \XX)\right)-\XX u\left(\exp_{x_{j+1}}(h_{l_j}\cdot \XX)\right)\right|\nonumber\\
    \leq & c\left(\omega\left(d\left(\exp_{x}(h\cdot \XX), \exp_{x}(h_{l_j}\cdot \XX)\right)\right)+\omega\left(d\left(\exp_{x}(h_{l_j}\cdot \XX), \exp_{x_{j+1}}(h_{l_j}\cdot \XX)\right)\right) \right)\nonumber\\
    \leq & c\left(\omega(|h-h_{l_j}|^{1/r})+|h-h_{l_{j+1}}|^{1/r}\right)\nonumber\\
    \leq & \theta_{\delta/2},
\end{align}
where $\omega$ is a modulus dominating the modulus of (partial) continuity of the gradient. But we know that $x_{j+1}\in \mathcal{A}_{\delta}(l_j, l_{j+1})$ from \eqref{xj+1 is in a maximal set} and from \eqref{Semi convex/concave solutions: 3.23 of MM} we have 
\begin{align}\label{lower bound by thetadelta}
    \left| \XX u(\exp_{x_{j+1}}(h_{l_j}\cdot \XX))\right|\geq \theta_{\delta}>0.
\end{align}
Now, combining \eqref{lower bound by thetadelta} with \eqref{dominating by thetadelta} we get that $|\XX u(\exp_{x}(h_{l_j}\cdot \XX)|\geq \theta_{\delta}>0$ for any $x\in \mathcal{A}_{\delta}(h,h)$ and hence we are done proving the claim.

The proof idea in the rest of the proof is substantialy different from \cite[Proposition 3.5]{MM} as we don't have a concept of ``left invariance" as in the setting of a Lie group. 

However, we will use perturbation $\mathcal{L}^{h}$ of $\mathcal{L}$ as given by the construction in \eqref{point 3 approximating by viscosity solution} of Lemma \ref{approximating by convex solutions}. Now, $\mathcal{L}^h$ along with $u^h$ and $v_h$ will satisfy the hypotheses of Lemma \ref{Semi convex solutions: Lemma 3.4 from MM} and then taking $\delta\to 0$ will give us the necessary contradiction. 
\end{proof}


\begin{proof}[Proof of Theorem \ref{Introduction: Theorem 1.1 of MM}]\label{final proof} Let $u, v\in C(\bar{\Omega})$ be viscosity sub/super solutions of \eqref{Introduction: main equation} respectively with $u\leq v$ on $\partial \Omega$. Suppose for a contradiction assume that there exists $x_0$ in the interior of $\Omega$ such that 
\begin{align*}
    u(x_0)-v(x_0)=\max_{x\in \Omega}\{u(x)-v(x)\}>0.
\end{align*}
Without loss of generality also assume that $u-b\leq -\tau$ on $\partial \Omega$ for some small $\tau>0$. Then, from Proposition \ref{approximating by convex solutions}, we have $u^{\epsilon}$ and $v_{\epsilon}$ so that $\mathcal{L}^{\epsilon}u^{\epsilon}\leq 0\leq \mathcal{L}^{\epsilon} v_{\epsilon}$ at a given maximal point of $u^{\epsilon}-v_{\epsilon}$ in $\Omega_{(1+4R)\epsilon}$ in the viscosity sense for $R=2\max\{\|u\|_{L^{\infty}}, \|v\|_{L^{\infty}}\}$ and 
\begin{align*}
    \max\{\|u^{\epsilon}-u\|_{L^{\infty}}, \|v_{\epsilon}-v\|_{L^{\infty}}\}\leq c\omega(\epsilon), 
\end{align*}
for some modulus $\omega$ dominating the moduli of convergences. It is easy to see that $\Omega_{(1+4R)\epsilon}\to \Omega$ as $\epsilon\to 0^+$, and $u^{\epsilon}$ and $-v_{\epsilon}$ being semi convex are locally Lipschitz with $\|\XX u^{\epsilon}\|_{L^{\infty}}\leq c\|u^{\epsilon}\|_{L^{\infty}}$ and $\|\XX v_{\epsilon}\|_{L^{\infty}}\leq c\|v_{\epsilon}\|_{L^{\infty}}$ for some $c=c(n, \|\sigma\|_{L^{\infty}}, \text{diam}(\Omega))>0$ in compact subsets. Taking $\epsilon>0$ small enough so that $\omega(\epsilon)< \tau/4c$ and $([1+4R]\epsilon)^{1/2}(1+R)\leq \tau/2c$, we can conclude that for any $z\in \partial\Omega_{(1+4R)\epsilon}$
\begin{align*}
    u^{\epsilon}(z)-v_{\epsilon}(z)&\leq -\tau +2c\omega(\epsilon) +\text{dist} (x, \partial\Omega)(\|\XX u^{\epsilon}\|_{L^{\infty}}+\|\XX v_{\epsilon}\|_{L^{\infty}})\\
    &\leq -\tau/2 + c[(1+4R)\epsilon]^{1/2} (\|u^{\epsilon}\|_{L^{\infty}}+\|v_{\epsilon}\|_{L^{\infty}})\\
    &\leq -\tau/2 + cc[(1+4R)\epsilon]^{1/2} (1+R)\leq 0.
\end{align*}
Thus $u^{\epsilon}$ and $v_{\epsilon}$ along with $\mathcal{L}^{\epsilon}$(we takes the points $\{x_1, \cdots, x_k\}$ from \ref{point 3 approximating by viscosity solution} to be the elements in $\argmax_{\Omega_{(1+4R)\epsilon}}(u-v)$) satisfies the hypotheses of Proposition \ref{Proposition 3.5 of MM} in $\Omega_{(1+4R)\epsilon}$ and therefore $u^{\epsilon}\leq v_{\epsilon}$ in $\Omega_{(1+4R)\epsilon}$. Taking $\epsilon\to 0^+$ we get the required contradiction and hence the proof is complete.  
\end{proof}

 \bibliographystyle{alpha}
\bibliography{references}

\end{document}